\newcolumntype{L}{>{\raggedright\arraybackslash}m{.3105\textwidth}}
\newcolumntype{M}{>{\raggedright\arraybackslash}m{.477\textwidth}}
\newcolumntype{W}{>{\centering\arraybackslash}m{.477\linewidth}}
\newcolumntype{H}{>{\centering\arraybackslash}m{.3105\textwidth}}
\newtheorem{thm}{Theorem}[section]
\newtheorem{lem}[thm]{Lemma}
\newtheorem{prop}[thm]{Proposition}
\newtheorem{cor}[thm]{Corollary}
\newtheorem{conj}[thm]{Conjecture}
\theoremstyle{definition}
\newtheorem{defn}[thm]{Definition}
\newtheorem{ex}[thm]{Example}
\newtheorem{rem}[thm]{Remark}
\newcommand{\bbA}{\mathbb{A}}
\newcommand{\bbC}{\mathbb{C}}
\newcommand{\bbF}{\mathbb{F}}
\newcommand{\bbN}{\mathbb{N}}
\newcommand{\bbP}{\mathbb{P}}
\newcommand{\bbQ}{\mathbb{Q}}
\newcommand{\bbZ}{\mathbb{Z}}
\newcommand{\bsJ}{\boldsymbol{J}}
\newcommand{\bsP}{\boldsymbol{P}}
\newcommand{\calD}{\mathcal{D}}
\newcommand{\calE}{\mathcal{E}}
\newcommand{\calK}{\mathcal{K}}
\newcommand{\calL}{\mathcal{L}}
\newcommand{\calO}{\mathcal{O}}
\newcommand{\calP}{\mathcal{P}}
\newcommand{\calQ}{\mathcal{Q}}
\newcommand{\calR}{\mathcal{R}}
\newcommand{\calS}{\mathcal{S}}
\newcommand{\calU}{\mathcal{U}}
\newcommand{\OK}{\calO_K}
\newcommand{\Kbar}{\overline{K}}
\newcommand{\QQbar}{\overline{\bbQ}}
\renewcommand{\hbar}{\overline{h}}
\newcommand{\frakp}{\mathfrak{p}}
\newcommand{\ord}{\operatorname{ord}}
\newcommand{\Gal}{\operatorname{Gal}}
\newcommand{\coker}{\operatorname{coker}}
\newcommand{\Spec}{\operatorname{Spec}}
\newcommand{\tors}{{\operatorname{tors}}}
\newcommand{\Jac}{\operatorname{Jac}}
\newcommand{\Res}{\operatorname{Res}}
\newcommand{\PrePer}{\operatorname{PrePer}}
\newcommand{\disc}{\operatorname{disc}}
\newcommand{\rk}{\operatorname{rk}}
\newcommand{\Ell}{\operatorname{ell}}
\newcommand{\Jell}{J^{\Ell}}
\newcommand{\Xell}{X^{\Ell}}
\newcommand{\Yell}{Y^{\Ell}}
\newcommand{\Ann}{\operatorname{Ann}}
\newcommand{\Log}{\operatorname{Log}}
\newcommand{\bsa}{{\boldsymbol{a}}}
\newcommand{\bsf}{{\boldsymbol{f}}}
\renewcommand{\tilde}{\widetilde}
\renewcommand{\phi}{\varphi}
\newcommand{\longto}{\longrightarrow}
\newcommand{\longhookrightarrow}{\lhook\joinrel\longrightarrow}
\newcommand{\Mu}{\textup{M}}
\newcommand{\pic}{\includegraphics[scale=.6]}
\renewcommand{\labelenumi}{(\Alph{enumi})}
\numberwithin{equation}{section}
\title[Preperiodic points over cyclotomic quadratic fields]{Preperiodic points for quadratic polynomials over cyclotomic quadratic fields}
\author{John R. Doyle}
\address{Department of Mathematics and Statistics\\
Louisiana Tech University\\
Ruston, LA 71272} 
\email{jdoyle@latech.edu}
\begin{document}

\begin{abstract}
Given a number field $K$ and a polynomial $f(z) \in K[z]$ of degree at least 2, one can construct a finite directed graph $G(f,K)$ whose vertices are the $K$-rational preperiodic points for $f$, with an edge $\alpha \to \beta$ if and only if $f(\alpha) = \beta$. Restricting to quadratic polynomials, the dynamical uniform boundedness conjecture of Morton and Silverman suggests that for a given number field $K$, there should only be finitely many isomorphism classes of directed graphs that arise in this way. Poonen has given a conjecturally complete classification of all such directed graphs over $\bbQ$, while recent work of the author, Faber, and Krumm has provided a detailed study of this question for all quadratic extensions of $\bbQ$. In this article, we give a conjecturally complete classification like Poonen's, but over the cyclotomic quadratic fields $\bbQ(\sqrt{-1})$ and $\bbQ(\sqrt{-3})$. The main tools we use are \emph{dynamical modular curves} and results concerning quadratic points on curves.
\end{abstract}

\keywords{Preperiodic points; uniform boundedness; dynamical modular curves; quadratic points}

\subjclass[2010]{Primary 37P05; Secondary 37P35, 14G05}

\maketitle

\section{Introduction}
Let $K$ be a number field, and let $f \in K(z)$ be a rational map of degree $d \ge 2$. For each integer $n \ge 0$, we let $f^n$ denote the $n$-fold composition of $f$; that is, $f^0$ is the identity, and $f^n = f \circ f^{n-1}$ for each $n \ge 1$. We say that $\alpha \in \bbP^1(K)$ is {\bf preperiodic} for $f$ if there exist integers $m \ge 0$ and $n \ge 1$ such that $f^{m + n}(\alpha) = f^m(\alpha)$; in this case, the minimal such $m$ and $n$ are called the {\bf preperiod} and {\bf eventual period}, respectively, and we refer to the pair $(m,n)$ as the {\bf (preperiodic) portrait} of $\alpha$. If the preperiod is 0, we say that $\alpha$ is {\bf periodic} with {\bf period} $n$. We set
	\[
		\PrePer(f,K) := \{\alpha \in \bbP^1(K) : \alpha \text{ is preperiodic for } f\}.
	\]
We denote by $G(f,K)$ the functional graph associated to the restriction of $f$ to $\PrePer(f,K)$; that is, the vertices of $G(f,K)$ are the $K$-rational preperiodic points for $f$, and there is a directed edge from $\alpha$ to $\beta$ if and only if $f(\alpha) = \beta$.

Northcott proved in \cite[Thm. 3]{northcott:1950} that $\PrePer(f,K)$ is a finite set. Based on the analogy between preperiodic points for rational maps and torsion points on elliptic curves, Morton and Silverman have conjectured a dynamical analogue of the strong uniform boundedness conjecture (now Merel's theorem \cite{merel:1996}) for elliptic curves:

\begin{conj}[{\cite[p. 100]{morton/silverman:1994}}]\label{conj:ubc}
Fix $n \ge 1$ and $d \ge 2$. There is a constant $C(n,d)$ such that for any number field $K$ of absolute degree $n$, and for any rational map $f \in K(z)$ of degree $d$,
	\[
		\#\PrePer(f,K) \le C(n,d).
	\]
\end{conj}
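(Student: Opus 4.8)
The plan is to mimic the proof of Merel's theorem for torsion of elliptic curves, replacing the classical modular curves $X_1(N)$ by \emph{dynatomic modular curves} (the ``dynamical modular curves'' of the abstract). The first step is a reduction: it suffices to bound portraits, i.e.\ to produce, for each $n$ and $d$, a constant $B(n,d)$ such that every preperiodic point of every degree-$d$ map $\phi$ over every number field $K$ of degree $n$ has portrait $(m,N)$ with $m+N \le B(n,d)$. Indeed, if $\alpha$ has portrait $(m,N)$ then $\phi^{m+N}(\alpha)=\phi^m(\alpha)$, so $\PrePer(\phi,K)$ lies in the union, over the $O(B^2)$ pairs $(m,N)$ with $m+N\le B$, of the zero sets of the rational functions $\phi^{m+N}(z)-\phi^m(z)$, each of which is a nonzero rational function (else $\phi^N=\id$, impossible for $d\ge 2$) with at most $O(d^{B})$ zeros; hence $\#\PrePer(\phi,K)=O(B^2 d^{B})$, a bound depending only on $n$ and $d$.

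To bound portraits I would, for each $N$, work with the dynatomic modular curve $\Ydyn_1(N)$ parametrizing pairs $(\phi,\alpha)$ with $\deg\phi=d$ and $\alpha$ of exact period $N$, together with the \emph{generalized} dynatomic curves recording a preimage chain of prescribed length leading into an $N$-cycle; for quadratic polynomials $z^2+c$ these are honest curves over $\bbQ$, whereas already for general quadratic rational maps, and still more so in higher degree, they are positive-dimensional (one then works with these varieties or restricts to one-parameter subfamilies). A period-$N$ point of $\phi$ over a degree-$n$ field produces a degree-$\le n$ point on $\Ydyn_1(N)$. What one needs is a \emph{dynamical analogue of the theorems of Mazur and Merel}: a function $N_0(n,d)$ such that for $N>N_0(n,d)$ the curve $\Ydyn_1(N)$ has no degree-$\le n$ point arising from a genuine period-$N$ point, and likewise a bound on the length of preimage chains coming from the generalized curves. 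Granting these emptiness statements, every cycle and every tail of a $K$-rational preperiodic orbit has bounded length, which yields $B(n,d)$ and hence Conjecture~\ref{conj:ubc}.

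The hard part is exactly this arithmetic emptiness, and it is where dynamics lags behind the elliptic theory. The geometric side is in decent shape: the gonality of $\Ydyn_1(N)$ grows with $N$ (a theorem of Doyle--Poonen in the quadratic-polynomial case), and Frey's theorem, via Faltings' theorem on rational points of subvarieties of abelian varieties, then gives, for each fixed $N$, only \emph{finitely many} degree-$\le n$ points. But finiteness for each $N$ is much weaker than emptiness for all large $N$: it leaves open the possibility of sporadic large-period points accumulating as $N\to\infty$, which is precisely why the classification in this paper --- like Poonen's over $\bbQ$ --- remains conjectural. In the elliptic setting the emptiness is supplied by Mazur's Eisenstein-ideal argument and by Kamienny's and Merel's use of the Hecke action on $J_1(N)$ and the geometry of the winding quotient; the dynatomic Jacobians $\Jdyn_1(N)$ carry no comparable Hecke module structure, so none of this transfers, and a genuinely new mechanism would be needed --- perhaps via heights and equidistribution, or via the reduction-theoretic constraints underlying the unconditional but non-uniform bounds of Morton--Silverman and Benedetto. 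Absent such an input, Conjecture~\ref{conj:ubc} appears to be out of reach.
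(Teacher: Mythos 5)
The statement you were asked to prove is Conjecture~\ref{conj:ubc}, the Morton--Silverman dynamical uniform boundedness conjecture, and it is not proved in the paper --- nor anywhere else. The paper explicitly records that it is unknown whether the constant $C(n,d)$ exists for even a single pair $(n,d)$, including for polynomial maps, and all of the paper's classification results (Theorem~\ref{thm:main}, Propositions~\ref{prop:main_i} and \ref{prop:main_omega}) are \emph{conditional} on hypotheses (no $K$-rational points of period greater than $5$) that are themselves special cases of this conjecture. So there is no proof in the paper to compare your attempt against.

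Your write-up is candid about this, and the parts you do argue are essentially correct: the reduction in your first paragraph (it suffices to bound preperiod plus eventual period uniformly, since a point of portrait $(m,N)$ is a zero of the nonzero rational function $\phi^{m+N}(z)-\phi^m(z)$, which has $O(d^{m+N})$ zeros) is sound and standard, and your identification of the bottleneck --- showing that the dynatomic varieties $\Ydyn_1(N)$ carry no degree-$\le n$ points arising from genuine period-$N$ points once $N$ exceeds some $N_0(n,d)$ --- is exactly where the problem stands. But that emptiness statement is the entire content of the conjecture, and you supply no argument for it: gonality growth together with Frey's theorem and Faltings gives finiteness of low-degree points for each \emph{fixed} $N$, not emptiness for all large $N$, as you yourself observe, and no analogue of the Eisenstein-ideal or winding-quotient machinery is available on the dynatomic Jacobians. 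Since the crucial step is asserted as a needed input rather than established, what you have written is a correct description of a research program, not a proof, and the conjecture remains open.
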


It is currently unknown whether such a constant exists for any pair of integers $(n,d)$, even if one restricts to polynomial maps. Even the simplest polynomial case $(n,d) = (1,2)$---that is, quadratic polynomials over $\bbQ$---has not yet been proven unconditionally, though Looper \cite{looper} has recently given a proof assuming a generalization of the $abc$-conjecture over $\bbQ$.

The difficulty in proving Conjecture~\ref{conj:ubc} in this case is bounding the possible {\it periods} of rational periodic points---see \cite[Cor. 1.11]{doyle/poonen:2020} for an explicit statement to this effect. It is shown in \cite{walde/russo:1994} that for each period $n \in \{1,2,3\}$, there are infinitely many quadratic polynomials (up to an appropriate notion of equivalence) with a rational point of period $n$. On the other hand, there are no quadratic polynomials with rational points of period $4$ (\cite[Thm. 4]{morton:1998}), period $5$ (\cite[Thm. 1]{flynn/poonen/schaefer:1997}), or---assuming standard conjectures on $L$-series for the Jacobian of a certain curve of genus $4$---period $6$ (\cite[Thm. 7]{stoll:2008}). It was conjectured in \cite{flynn/poonen/schaefer:1997} that no quadratic polynomial over $\bbQ$ could have a rational point of period greater than 3, and Poonen has shown that this conjecture would imply uniform boundedness for quadratic polynomials over $\bbQ$, analogous to Mazur's theorem \cite{mazur:1977} for rational torsion points on elliptic curves.

\begin{thm}[{\cite[Cor. 1]{poonen:1998}}]\label{thm:poonen}
Let $f \in \bbQ[z]$ be a quadratic polynomial. If $f$ does not admit rational points of period greater than 3, then $G(f,\bbQ)$ is isomorphic to one of the following twelve directed graphs, which appear in Appendix~\ref{app:all_data}:
	\begin{center}
	\rm0, 2(1), 3(1,1), 3(2), 4(1,1), 4(2), 5(1,1)a, 6(1,1), 6(2), 6(3), 8(2,1,1), 8(3).
	\end{center}
In particular, $\#\PrePer(f,\bbQ) \le 9$.
\end{thm}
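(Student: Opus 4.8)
The plan is to reduce to the normal form $f_c(z) = z^2 + c$ and then recast the classification as a question about rational points on finitely many \emph{dynamical modular curves}. Every quadratic $f \in \bbQ[z]$ is conjugate by a unique affine map in $\bbQ[z]$ to some $f_c(z) = z^2 + c$ with $c \in \bbQ$, and an affine conjugacy induces an isomorphism of functional graphs $G(f,\bbQ) \cong G(f_c,\bbQ)$; so it suffices to determine $G(f_c,\bbQ)$ as $c$ ranges over $\bbQ$. Here $\infty$ is always a fixed point, contributing one isolated vertex, so the real content is the structure of the affine preperiodic points.

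I would first pin down combinatorially which graphs can occur. Since $f_c$ has degree $2$, every vertex has in-degree at most $2$; in fact the only finite ramification point of $f_c$ is the critical point $0$, and $f_c^{-1}(c) = \{0\}$, so in $G(f_c,\bbQ)$ every affine vertex other than $c$ has in-degree $0$ or $2$, while $c$ has in-degree $1$ precisely when $0$ is preperiodic. The periodic vertices form disjoint cycles whose lengths are at most $3$ --- by hypothesis together with \cite{morton:1998, flynn/poonen/schaefer:1997} --- and counting roots of the dynatomic polynomials of $f_c$ bounds the number of cycles of each length. Hanging off each cycle is a tree of strictly preperiodic points, and since a point of portrait $(m,n)$ with $m \ge 1$ maps to a point of portrait $(m-1,n)$, one obtains a short explicit list of candidate abstract graphs $G$.

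The crux is to decide, for each candidate $G$, whether some $c \in \bbQ$ realizes it, and for the infinite families that infinitely many do. To a preperiodic portrait $P$ one attaches the dynamical modular curve $X_1(P)$ (or $X_0(P)$) whose non-cuspidal, non-degenerate rational points parametrize pairs $(c,\alpha)$ with $\alpha$ of portrait $P$ for $f_c$; the simultaneous occurrence of several portraits corresponds to a fiber product of such curves over the $c$-line. I would then compute the genus of each curve that arises. Those that are rational with a rational point, or genus $1$ of positive Mordell--Weil rank, carry infinitely many $c$ and will account for exactly the twelve graphs in the statement; every other relevant curve is genus $0$ with no rational point, genus $1$ of rank $0$, or of genus $\ge 2$, hence has only finitely many rational points. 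For these I would enumerate all rational points and verify they are cuspidal, degenerate, or correspond to finitely many explicit exceptional values of $c$ already accounted for. A useful economy is that a curve dominating one already known to have finitely many (or no) rational points inherits that property, so only a bounded number of genus computations and point searches are genuinely required.

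The main obstacle will be this last step: \emph{provably} determining all rational points on the borderline curves. Genus-$0$ cases reduce to local solvability; genus-$1$ rank-$0$ cases to a Mordell--Weil group computation; but the genus-$\ge 2$ curves require a Chabauty--Coleman argument or a Mordell--Weil sieve with covering collections, and carrying these out --- and then checking that the resulting finite sets of $c$ genuinely fit into the proposed classification --- is where essentially all the difficulty lies. Once every curve on the finite list is dispatched, one assembles the pieces: the appearance of any sub-portrait in $G(f_c,\bbQ)$ forces $c$ onto the associated modular curve, so the combinatorial candidates not ruled out are exactly the twelve listed graphs, whence $\#\PrePer(f,\bbQ) \le 9$.
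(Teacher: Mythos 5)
This theorem is not proved in the paper at all: it is quoted verbatim from Poonen \cite[Cor.~1]{poonen:1998}, and your outline is essentially a faithful summary of Poonen's actual argument (reduction to $z^2+c$, combinatorial enumeration of admissible graphs with cycles of length at most $3$, and determination of the rational points on the finitely many relevant dynatomic curves via genus computations, rank-$0$ Mordell--Weil groups, and a Chabauty-type argument for the genus-$2$ curve giving $c=-29/16$). So the approach is the right one; the only caveat is that your text is a strategy sketch, and all the substance lies in the explicit curve computations you defer.
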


\begin{rem}\label{rem:infinity}
The bound from Theorem~\ref{thm:poonen} is 9, while the largest graph appearing in the classification has eight vertices. This is due to the fact that, following the convention of \cite{poonen:1998,doyle/faber/krumm:2014,doyle:2018quad}, we omit the fixed point at infinity when describing the preperiodic graph for a polynomial map.
\end{rem}

A reasonable next step in studying preperiodic points for quadratic polynomials is to give a classification like Poonen's, but over quadratic extensions of $\bbQ$. Conjecture~\ref{conj:ubc} suggests that there should be only finitely many isomorphism classes of graphs $G(f,K)$ with $K$ a quadratic field and $f \in K[z]$ quadratic, so there are two natural directions to pursue:
\renewcommand{\labelenumi}{(\arabic{enumi})}
	\begin{enumerate}
	\item Classify those graphs $G$ that may be realized as $G(f,K)$ for some quadratic field $K$ and some quadratic $f \in K[z]$.
	\item Fix a collection of quadratic fields $K$, and for each classify those graphs $G$ that may be realized as $G(f,K)$ for some quadratic $f \in K[z]$.
	\end{enumerate}
\renewcommand{\labelenumi}{(\Alph{enumi})}
A classification as in (1) would be a dynamical analogue of the corresponding result by Kamienny \cite{kamienny:1992} and Kenku-Momose \cite{kenku/momose:1988} for quadratic torsion points on elliptic curves; the articles \cite{doyle/faber/krumm:2014,doyle:2018quad} (with \cite{doyle/krumm/wetherell} in preparation) give progress in this direction. In the current article, we consider direction (2), giving a conditional classification like Theorem~\ref{thm:poonen} for the quadratic cyclotomic fields $\bbQ(i)$ and $\bbQ(\omega)$, where $i = \sqrt{-1}$ and $\omega = (-1 + \sqrt{-3})/2$ are primitive fourth and third roots of unity, respectively. We now state our main result, which should be viewed as a conditional analogue of classification results due to Najman \cite{najman:2011, najman:2010} for torsion on elliptic curves defined over these two quadratic fields.

\begin{thm}\label{thm:main}\mbox{}
	\begin{enumerate}
		\item Let $K = \bbQ(i)$, and let $f \in K[z]$ be a quadratic polynomial that does not admit $K$-rational points of period greater than 5. Then $G(f,K)$ is isomorphic to one of the following fourteen graphs:
			\begin{center}
				\rm 0, 3(2), 4(1,1), 4(2), 5(1,1)a/b, 5(2)a, 6(1,1), 6(2), 6(2,1), 6(3),
				8(2,1,1), 8(3), 10(2,1,1)a.
			\end{center}
		\item Let $K = \bbQ(\omega)$, and let $f \in K[z]$ be a quadratic polynomial that does not admit $K$-rational points of period greater than 5. Then $G(f_c,K)$ is isomorphic to one of the following thirteen graphs:
			\begin{center}
				\rm 0, 3(2), 4(1), 4(1,1), 4(2), 5(1,1)a, 6(1,1), 6(2), 6(3), 7(2,1,1)a, 8(2)a, 8(2,1,1), 8(3).
			\end{center}
	\end{enumerate}
\end{thm}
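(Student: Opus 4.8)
The plan is to follow the template of Poonen's classification over $\bbQ$ (Theorem~\ref{thm:poonen}) and of the author's work with Faber, Krumm, and Wetherell over general quadratic fields \cite{doyle/faber/krumm:2014, doyle:2018quad, doyle/krumm/wetherell}, specialized to the two fields $K \in \{\bbQ(i), \bbQ(\omega)\}$. Every quadratic polynomial over $K$ is $K$-conjugate to a unique polynomial of the form $f_c(z) = z^2 + c$ with $c \in K$, and the preperiodic graph is a conjugacy invariant, so it suffices to classify the graphs $G(f_c,K)$ as $c$ ranges over $K$. Such a graph is a disjoint union of periodic cycles with finite rooted binary trees attached, and its shape is sharply constrained: $0$ is the only $K$-rational preimage of $c$, while every other $K$-rational preperiodic point has exactly $0$ or $2$ $K$-rational preimages. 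Combining this with Poonen-style reduction constraints at small primes of good reduction (the primes above $2$ and $5$ for $\bbQ(i)$, those above $3$ and $7$ for $\bbQ(\omega)$), with the hypothesis that no $K$-rational cycle has length greater than $5$, and with the classification of portraits realizable over some quadratic field, one reduces the problem to a finite, explicit list of \emph{admissible} graphs containing every graph that can occur as $G(f_c,K)$.

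The heart of the argument is to decide, for each admissible graph $G$, whether it is actually realized over $K$; this is where dynamical modular curves enter. To each $G$ one attaches a curve $X_1(G)$ whose noncuspidal points parametrize parameters $c$ together with a configuration of preperiodic points of type $G$, and realizing $G$ \emph{exactly} amounts to producing a point of $X_1(G)(K)$ lying on none of the curves $X_1(G')$ with $G' \supsetneq G$. When $X_1(G)$ has genus $0$ with a $K$-point it is $K$-rational and $G$ is realized, which we certify by writing down an explicit $c \in K$. When $X_1(G)$ has genus $1$ we compute the Mordell--Weil rank of its Jacobian over $K$ and either exhibit a non-degenerate $K$-point or show that all $K$-points are cuspidal or force a strictly larger graph. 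When $X_1(G)$ has genus at least $2$, Faltings's theorem makes $X_1(G)(K)$ finite, and we pin it down by Chabauty--Coleman over $K$ and the Mordell--Weil sieve, or, when the rank is too large, by passing to a lower-genus quotient; the favorable arithmetic of $\bbQ(i)$ and $\bbQ(\omega)$ --- class number one, small discriminant, extra roots of unity --- keeps these computations tractable. Among cycle lengths, period $5$ is eliminated already by the corresponding statement over arbitrary quadratic fields, whereas period $4$ demands a genuinely field-specific analysis.

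The main obstacle I anticipate is that several of the relevant dynamical modular curves --- the genus-$2$ curve governing $4$-cycles being the prototype, along with certain bielliptic curves attached to combinations of small portraits --- carry infinitely many quadratic points, so their having only finitely many points over a \emph{fixed} $K$ cannot be deduced from any generic statement about quadratic points. For such a curve one must determine precisely which quadratic fields occur among its quadratic points: on an explicit hyperelliptic or bielliptic model, the quadratic points not defined over $\bbQ$ lie in fields of the form $\bbQ(\sqrt{h(t)})$ for an explicit polynomial or rational function $h$, and deciding whether $\bbQ(i)$ or $\bbQ(\omega)$ arises reduces to locating the $S$-integral points on auxiliary curves such as $h(t) = -u^2$ or $h(t) = -3u^2$. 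Carrying this out excludes $4$-cycles (and confirms the absence of $5$-cycles) over both fields and removes the remaining oversized admissible graphs, leaving exactly the fourteen graphs of part~(1) and the thirteen of part~(2); the proof then concludes by producing, for each surviving graph, a witnessing parameter $c \in K$.
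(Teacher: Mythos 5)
Your overall architecture (reduce to $f_c$, constrain the graph shape, attach a dynamical modular curve $X_1(G)$ to each candidate graph, and settle $X_1(G)(K)$ by genus: rational parametrization in genus $0$, Mordell--Weil rank in genus $1$, Chabauty-type methods in genus $\ge 2$) matches the paper's, and you correctly identify the central difficulty that curves with infinitely many quadratic points require a field-specific analysis. But there is a genuine gap in your treatment of period $5$. You claim it "is eliminated already by the corresponding statement over arbitrary quadratic fields"; no such statement exists, and the theorem's hypothesis only forbids periods \emph{greater} than $5$, so period $5$ must be excluded by argument over each field. The paper works with the quotient $X_0(5)$ (genus $2$). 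Over $\bbQ(\omega)$ the twist of $X_0(5)$ by $-3$ has Jacobian rank $0$, so $\rk J_0(5)(\bbQ(\omega)) = 1 < g$ and a saturation argument (Corollary~\ref{cor:saturation_samerank} plus Proposition~\ref{prop:same_jac}, using $J_0(5)(K)_{\tors}=0$) gives $X_0(5)(\bbQ(\omega)) = X_0(5)(\bbQ)$. Over $\bbQ(i)$, however, $\rk J_0(5)(\bbQ(i)) = 2 = g$: classical Chabauty--Coleman fails, there is no useful lower-genus quotient, and the paper devotes Appendix~\ref{app:X0(5)} to a Weil-restriction variant (splitting the $2$-adic integrals into "real" and "imaginary" parts to obtain $2g$ linear functionals, applicable when $r \le d(g-1)$) to prove $X_0(5)(\bbQ(i)) = X_0(5)(\bbQ)$. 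Your stated toolkit does not cover this case, and it is the single hardest computation in the paper.

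Two secondary points. First, your method for deciding which quadratic fields occur among the quadratic points of a hyperelliptic curve --- parametrizing them as $\bbQ(\sqrt{h(t)})$ and solving $h(t) = -u^2$ or $h(t) = -3u^2$ --- only captures quadratic points pulled back from $\bbP^1(\bbQ)$ under the hyperelliptic map; a genus-$2$ curve can carry finitely many sporadic quadratic points not of this form, and these would be missed. The paper's route is different and complete: it computes the torsion of each relevant Jacobian over \emph{all} quadratic fields simultaneously (bounding it by reduction at small primes and then solving explicitly for the $2$- and $3$-torsion, as in Theorems~\ref{thm:gen_one_torsion} and \ref{thm:gen_two_torsion}), verifies the rank does not grow, and concludes $J(K) = J(\bbQ)$ and hence $X(K) = X(\bbQ)$ via the saturation argument --- which accounts for every $K$-point at once. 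Second, the paper does not use reduction constraints at small primes to enumerate the candidate graphs; the finite list comes from the lattice of strongly admissible graphs with the four permitted cycle structures (Lemma~\ref{lem:structures}), the genus classification of Proposition~\ref{prop:all_dyn_mod_curves}, and the quadratic-point results of the prior papers for the larger graphs. Your sketch of this enumeration step would need to be made precise before the case analysis can even begin.
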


The proofs of all of the results referred to above for torsion points on elliptic curves relied heavily on the use of modular curves, which parametrize elliptic curves (up to isomorphism) together with marked points of a given order. Perhaps unsurprisingly, much of the corresponding work on preperiodic points for quadratic polynomials has relied on {\it dynamical modular curves,} which parametrize quadratic polynomials (up to dynamical equivalence) together with marked preperiodic points. In particular, Theorem~\ref{thm:poonen} required finding the full set of rational points on several dynamical modular curves, just as the articles \cite{doyle/faber/krumm:2014, doyle:2018quad, doyle/krumm/wetherell} involve finding {\it quadratic} points on such curves; this is the strategy we employ in the current article.

We give a brief overview of dynamical modular curves in \textsection\ref{sec:dmc}, and we collect in \textsection\ref{sec:quad} several results that will be useful for determining the set of quadratic points on such curves. Section~\ref{sec:genus_gonality} gives some results on dynamical modular curves of low genus, which are later used to prove our main theorem. Finally, \textsection\ref{sec:specific} contains the proof of Theorem~\ref{thm:main}, which has been split into two statements, Propositions~\ref{prop:main_i} and \ref{prop:main_omega}. We also include in an appendix the complete determination of the set of $\bbQ(i)$-rational points on the dynamical modular curve $X_0(5)$, which parametrizes quadratic polynomials together with a marked periodic cycle of length $5$. The calculation involves a slight variant of the usual Chabauty-Coleman method.

\subsection*{A remark on computations}
Nearly all of the required computations were carried out using Magma \cite{magma_cyclotomicquad}, though a few were done in Sage \cite{sage_cyclotomicquad}. We have included, as ancillary files to this article's arXiv submission \cite{doyle:cyclotomic}, three files containing code and output. The first ({\tt main.txt}) contains the calculations for the main body of the paper, while the second ({\tt Chabauty.txt}) and third ({\tt KummerSurface.txt}) include the computations required for Appendix~\ref{app:X0(5)}.

\subsection*{Acknowledgments} This article began as part of my dissertation at the University of Georgia, though several additions and improvements have been made since that time. I thank my advisor, Bob Rumely, for many insightful conversations and for his guidance during my time at Georgia. I thank Pete Clark for his help with some of the background on algebraic curves and Bjorn Poonen for helpful discussions. In addition to providing several helpful comments, I would like to thank the anonymous referee for the motivation to prove Theorem~\ref{thm:5cycle_cyclotomic} over $\bbQ(i)$ (the original version only included the statement for $\bbQ(\omega)$), and I am indebted to Joseph Wetherell for introducing me to the method used in \textsection \ref{sec:ChabCalc} to do so.

\section{Dynamical modular curves for quadratic polynomial maps}\label{sec:dmc}

In this section, $K$ will be a number field. Given a finite directed graph $G$, we describe a \emph{dynamical modular curve} whose $K$-rational points parametrize quadratic maps $f \in K[z]$, up to equivalence, together with a collection of marked points that ``generate" a subgraph of $G(f,K)$ isomorphic to $G$. Throughout this article, we will use the notation $X_1(\cdot)$, $Y_1(\cdot)$, and $U_1(\cdot)$ exclusively to represent various dynamical modular curves. When we need to refer to a \emph{classical} modular curve, which parametrizes elliptic curves together with certain level structure, we will heed the advice of \cite[p. 163]{silverman:2007} and write $\Xell_1(\cdot)$ and $\Yell_1(\cdot)$ to avoid confusion.

We first describe what we mean by dynamical equivalence: We say that two polynomial maps $f,g \in K[z]$ are {\bf linearly conjugate} if there exists a polynomial $\ell(z) = az + b$, with $a,b \in K$ and $a \ne 0$, such that $g = f^\ell := \ell^{-1} \circ f \circ \ell$. Linear conjugation is the appropriate notion of equivalence dynamically since conjugation commutes with iteration. In particular, $\ell$ induces a graph isomorphism $G(g,K) \overset{\sim}{\longrightarrow} G(f,K)$. It is well known that every quadratic polynomial over a field $K$ of characteristic 0 is linearly conjugate to a unique polynomial of the form
	\[
		f_c(z) := z^2 + c
	\]
with $c \in K$, so it suffices to restrict our attention to maps of this form.

In this paper, we give only an informal description of these dynamical modular curves. A more formal treatment appears in \cite{doyle:2019}.

\subsection{Dynatomic curves}\label{sub:per_dyn}

Let $N$ be any positive integer. If $x$ is a point of period $N$ for $f_c$, then we have $f_c^N(x) - x = 0$. However, this equation is also satisfied if $x$ has period equal to a proper divisor of $N$. One therefore defines the \textbf{$N$th dynatomic polynomial} to be
	\[
		\Phi_N(x,c) := \prod_{n \mid N} \left(f_c^n(x) - x\right)^{\mu(N/n)} \in \bbZ[x,c],
	\]
where $\mu$ is the M\"{o}bius function. The dynatomic polynomials provide a natural factorization
	\begin{equation}\label{eq:Ncycle}
		f_c^N(x) - x = \prod_{n \mid N} \Phi_n(x,c)
	\end{equation}
for all $N \in \bbN$---see \cite[p. 571]{morton/vivaldi:1995}. If $(x,c) \in K^2$ satisfies $\Phi_N(x,c) = 0$, we say that $x$ has \textbf{formal period} $N$ for $f_c$. Every point of exact period $N$ has formal period $N$, but in some cases a point of formal period $N$ may have exact period $n$ a proper divisor of $N$. The fact that $\Phi_N(x,c)$ is a polynomial is shown in \cite[Thm. 4.5]{silverman:2007}. If we define\label{eq:r(N)}
	\begin{align*}
	d(N) &:= \deg_x \Phi_N(x,c) = \sum_{n \mid N} \mu(N/n) 2^n,\\
	r(N) &:= \frac{d(N)}{N},
	\end{align*}
then $d(N)$ (resp., $r(N)$) denotes the number of points (resp., cycles) of period $N$ for a generic quadratic polynomial map.

Since $\Phi_N(x,c)$ has coefficients in $\bbZ$, the equation $\Phi_N(x,c) = 0$ defines an affine plane curve $Y_1(N)$ over $K$, and this curve was shown to be irreducible over $\bbC$ by Bousch \cite[\textsection 3, Thm. 1]{bousch:1992}. We define $U_1(N)$ to be the Zariski open subset of $Y_1(N)$ on which $\Phi_n(x,c) \ne 0$ for each proper divisor $n$ of $N$. In other words, $(x,c)$ lies on $Y_1(N)$ (resp., $U_1(N)$) if and only if $x$ has formal (resp., exact) period $N$ for $f_c$. We denote by $X_1(N)$ the normalization of the projective closure of $Y_1(N)$.

Given a collection of pairwise distinct positive integers $N_1,\ldots,N_m$, we let $Y_1(N_1,\ldots,N_m)$ be the curve given as the subscheme of $\bbA^{m+1}$ defined by
\[
	\Phi_{N_1}(x_1,c) = \cdots = \Phi_{N_m}(x_m,c) = 0,
\]
and we let $X_1(N_1,\ldots,N_m)$ be the normalization of the projective closure of $Y_1(N_1,\ldots,N_m)$.

\subsection{Generalized dynatomic curves}
More generally, suppose $\alpha$ has preperiodic portrait $(M,N)$ for $f_c$ for some $M \ge 0$ and $N \ge 1$. In this case, we have $f_c^{M+N}(\alpha) - f_c^M(\alpha) = 0$; however, this equation is satisfied whenever $\alpha$ has portrait $(m,n)$ for some $0 \le m \le M$ and $n \mid N$. Therefore, for a pair of positive integers $M,N$, we define the \textbf{generalized dynatomic polynomial}
	\[ \Phi_{M,N}(x,c) := \frac{\Phi_N(f_c^M(x), c)}{\Phi_N(f_c^{M-1}(x), c)} \in \bbZ[x,c], \]
and we extend this definition to $M = 0$ by setting $\Phi_{0,N} := \Phi_N$. That $\Phi_{M,N}$ is a polynomial is proven in \cite[Thm. 1]{hutz:2015}. The generalized dynatomic polynomials give a natural factorization
	\[
		f_c^{M+N}(x) - f_c^M(x) = \prod_{m=0}^M\prod_{n \mid N} \Phi_{m,n}(x,c)
	\]
for all $M \ge 0$ and $N \ge 1$. If $\Phi_{M,N}(\alpha,c) = 0$, we say that $\alpha$ has \textbf{formal (preperiodic) portrait} $(M,N)$ for $f_c$. Just as in the periodic case, every point of exact portrait $(M,N)$ has formal portrait $(M,N)$, but the converse is not true in general. 

Let\footnote{Note the subtle distinction between $Y_1(M,N)$, defined at the end of \textsection \ref{sub:per_dyn}, and $Y_1((M,N))$.} $Y_1((M,N))$ be the affine plane curve defined by $\Phi_{M,N}(x,t) = 0$. That these curves are irreducible over $\bbC$ follows from the work of Bousch \cite[p. 67]{bousch:1992}. We define $U_1((M,N))$ to be the Zariski open subset of $Y_1((M,N))$ given by
\begin{equation}\label{eq:phiMNconditions}
	\Phi_{m,n}(x,c) \ne 0 \text{\ for all $m < M$ and $n < N$ (with $n \mid N$)},
\end{equation}
and we denote by $X_1((M,N))$ the normalization of the projective closure of $Y_1((M,N))$.
Note that a point $(\alpha,c)$ lies on $Y_1((M,N))$ (resp., $U_1((M,N))$) if and only if $\alpha$ has formal portrait (resp., exact portrait) $(M,N)$ for $f_c$.

\subsection{Admissible graphs}\label{sub:admissible}
Given a number field $K$ and a parameter $c \in K$, the graph $G(f_c,K)$ necessarily has a great deal of structure and symmetry dictated by the dynamics of quadratic polynomial maps. For this reason, we restrict our attention to finite directed graphs $G$ that possess this additional structure.

\begin{defn}\label{defn:admissible}
A finite directed graph $G$ is \textbf{admissible} if it has the following two properties:

\renewcommand{\labelenumi}{(\alph{enumi})}
\begin{enumerate}
\item Every vertex of $G$ has out-degree 1 and in-degree either 0 or 2.
\item For each $N \ge 2$, $G$ contains at most $r(N)$ $N$-cycles. (See the definition of $r(N)$ on page~\pageref{eq:r(N)}.)
\end{enumerate}

We say that $G$ is \textbf{strongly admissible} if it satisfies the following additional condition:
\begin{enumerate}
\setcounter{enumi}{2}
\item If $G$ contains a fixed point (i.e., a vertex with a self-loop), then $G$ contains exactly two such vertices.
\end{enumerate}
\renewcommand{\labelenumi}{(\Alph{enumi})}
\end{defn}

Strong admissibility is a property shared by nearly all preperiodic graphs $G(f_c,K)$. Condition (a) can only fail if there is a vertex of in-degree $1$, which happens if and only if the critical point $0$ is preperiodic. Condition (c) can only fail if $f_c$ has exactly one fixed point, and it is well known that only $c = 1/4$ has this property. To summarize, we have the following:

\begin{lem}[{\cite[Lem. 2.4 \& Cor. 2.6]{doyle:2019}}]\label{lem:admissible}
Let $K$ be a number field, and let $c \in K$. The graph $G(f_c,K)$ is \textit{admissible} if and only if $0 \notin \PrePer(f_c,K)$ and is \textit{strongly admissible} if and only if $0 \notin \PrePer(f_c,K)$ and $c \ne 1/4$. In particular, the set of parameters $c \in K$ for which $G(f_c,K)$ is not strongly admissible is finite.
\end{lem}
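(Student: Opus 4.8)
The plan is to verify each of the three defining conditions of (strong) admissibility directly, using the structural description of $G(f_c,K)$ that comes from the quadratic map $f_c(z) = z^2 + c$. The key observation is that $f_c$ is everywhere two-to-one on $\bbP^1$ except at its two ramification points, namely the critical point $0$ (whose only preimage is itself, through $f_c(z)-f_c(0) = z^2$) and the totally ramified fixed point $\infty$. Condition (a) of Definition~\ref{defn:admissible} asks that every vertex of $G(f_c,K)$ have out-degree $1$ and in-degree $0$ or $2$: out-degree $1$ is automatic since $f_c$ is a function, and the only way a preperiodic point $\beta \in \bbP^1(K)$ can fail to have in-degree $0$ or $2$ among the $K$-rational preperiodic points is if $\beta$ has exactly one preimage under $f_c$, which (since $\infty$ is omitted from the graph, see Remark~\ref{rem:infinity}) happens precisely when $\beta = f_c(0) = c$, i.e.\ when the critical point $0$ is itself preperiodic. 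Indeed, if $0 \notin \PrePer(f_c,K)$ then for every preperiodic $\beta$ the two square roots of $\beta - c$ are either both in $K$ or are a conjugate pair not in $K$; a Galois-conjugacy argument (one root preperiodic $\Leftrightarrow$ the other is, since $\phi$ commutes with $\Gal(\Kbar/K)$) rules out the split case unless both lie in $K$. So (a) holds iff $0 \notin \PrePer(f_c,K)$.

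Condition (b) — that $G$ contains at most $r(N)$ cycles of length $N$ for each $N \ge 2$ — holds for \emph{every} parameter $c$, and is essentially a restatement of the factorization~\eqref{eq:Ncycle}: the $N$-periodic points of $f_c$ in $\bbP^1(\Kbar)$ are exactly the roots of $\prod_{n \mid N}\Phi_n(x,c)$, and the points of \emph{exact} period $N$ are roots of $\Phi_N(x,c)$, a polynomial of degree $d(N)$ in $x$; hence there are at most $d(N)$ such points even over $\Kbar$, and since genuine $N$-cycles are disjoint and have size $N$, there are at most $d(N)/N = r(N)$ of them. A fortiori the same bound holds for the $K$-rational cycles in $G(f_c,K)$. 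Finally, condition (c) for strong admissibility asks that if $G(f_c,K)$ has a fixed point then it has exactly two. The fixed points of $f_c$ in $\bbP^1$ are $\infty$ together with the roots of $z^2 - z + c = \Phi_1(x,c)$; the latter has a repeated root (equivalently, $f_c$ has only one affine fixed point) precisely when its discriminant $1 - 4c$ vanishes, i.e.\ $c = 1/4$. For $c \ne 1/4$ the two affine fixed points are distinct — and again a Galois argument shows that a $K$-rational fixed point forces its conjugate, hence both, to be $K$-rational — so $G(f_c,K)$ has either zero or two fixed points; for $c = 1/4$ it can have exactly one. This gives (c) iff $c \ne 1/4$.

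Combining: $G(f_c,K)$ is admissible iff (a) holds iff $0 \notin \PrePer(f_c,K)$, and strongly admissible iff additionally (c) holds iff $c \ne 1/4$, as claimed. For the last sentence of the lemma, note that $c = 1/4$ is a single parameter, and $0 \in \PrePer(f_c,K)$ forces $c = f_c(0)$ to be a $K$-rational preperiodic point of the form $c$ with bounded height — more directly, the curves $X_1((M,N))$ parametrizing $c$ with $0$ preperiodic of portrait $(M,N)$ have, for each fixed portrait, finitely many $K$-points by Northcott applied to $f_c$ together with the finiteness of portraits realizable over a number field of bounded degree (or one simply cites that $\{c : 0 \in \PrePer(f_c,K)\}$ is finite, which is the statement of \cite[Cor.~2.6]{doyle:2019} being quoted). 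I do not expect a genuine obstacle here: the only mild subtlety is the Galois-descent step showing that a single $K$-rational preimage (or fixed point) cannot occur with its conjugate outside $K$ unless the relevant discriminant vanishes, and that $\infty$ must be treated separately because of the bookkeeping convention in Remark~\ref{rem:infinity}. The rest is the factorization~\eqref{eq:Ncycle} and the discriminant computation $1 - 4c$.
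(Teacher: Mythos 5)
Your verification of the three conditions is correct and matches the paper's own treatment: the paper does not prove this lemma but cites \cite{doyle:2019} and gives exactly your sketch (condition (a) fails precisely when a vertex has in-degree $1$, i.e.\ when $0$ is preperiodic; condition (c) fails precisely when $c = 1/4$), and your elaboration is sound --- note only that the two preimages of any $\beta \neq c$ are $\pm\sqrt{\beta - c}$, so they are simultaneously in or out of $K$ for the trivial reason that they are negatives of each other, and no Galois-conjugacy argument is needed; likewise condition (b) holds unconditionally because $\deg_x \Phi_N = d(N)$ bounds the points of exact period $N$.

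The one genuinely shaky step is your direct justification of the final finiteness assertion. ``Northcott applied to $f_c$'' only bounds $\PrePer(f_c,K)$ for a \emph{fixed} $c$, and ``the finiteness of portraits realizable over a number field of bounded degree'' is not available to you --- in general that is Conjecture~\ref{conj:ubc} of this very paper. The correct elementary argument is that $0 \in \PrePer(f_c,K)$ forces $|c|_v$ to be bounded at every place $v$ of $K$ (otherwise the critical orbit escapes $v$-adically), so $h(c)$ is bounded by an absolute constant, and then Northcott's theorem for algebraic numbers of bounded height and bounded degree gives finiteness of the set of such $c$. Since you also offer the fallback of simply citing \cite[Cor.~2.6]{doyle:2019} --- which is all the paper itself does --- this does not invalidate the proposal, but the direct argument as written would not stand on its own.
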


Given an admissible graph $G$, we define the \textbf{cycle structure} of $G$ to be the nondecreasing list of lengths of disjoint cycles occurring in $G$. We will say that $G$ \emph{contains} the cycle structure $\tau = (N_1,\ldots,N_m)$ if $\tau$ is a subsequence of the cycle structure of $G$; that is, if $G$ has an admissible subgraph with cycle structure $\tau$.

Before discussing the dynamical modular curves associated to admissible graphs, we require one more definition.

\begin{defn}\label{defn:generate}
Let $G$ be an admissible graph, and let $\{P_1,\ldots,P_n\}$ be a set of vertices of $G$. Let $H$ be the smallest admissible subgraph of $G$ containing all of the vertices $P_1,\ldots,P_n$. We say that $\{P_1,\ldots,P_n\}$ is a \textbf{generating set} for $H$. If any other generating set for $G$ contains at least $n$ vertices, then we call $\{P_1,\ldots,P_n\}$ a \textbf{minimal generating set} for $G$.
\end{defn}

We now describe dynamical modular curves $X_1(G)$ associated to admissible graphs $G$, generalizing those curves $X_1(N)$ and $X_1((M,N))$ defined above. The curves $X_1(G)$ are formally defined in \cite{doyle:2019}, where it is shown that $X_1(G)$ is always an irreducible curve in characteristic $0$. For the purposes of this article, however, we will be content to describe $X_1(G)$ as a curve over $\bbC$ as follows: Let $\{P_1,\ldots,P_n\}$ be a minimal generating set for $G$. For a subfield $L \subseteq \bbC$, define\footnote{For simplicity, the definition of $U_1(G)$ given here is slightly more restrictive than our definition of $U_1(G)$ in \cite{doyle:2019}. The difference is that in \cite{doyle:2019}, there were finitely many additional points $(\alpha_1,\ldots,\alpha_n,c)$ on $U_1(G)$ for which $0$ is in the orbit of $\alpha_i$ under $f_c$ for some $i \in \{1,\ldots,n\}$, which forces inadmissibility of the associated preperiodic graph by Lemma~\ref{lem:admissible}.} $U_1(G)(L)$ to be the set of all tuples $(\alpha_1,\ldots,\alpha_n,c) \in \bbA^{n+1}(L)$ such that $\{\alpha_1,\ldots,\alpha_n\}$ generates a subgraph of $\PrePer(f_c,L)$ isomorphic to $G$ via an identification $P_i \longmapsto \alpha_i$. Let $Y_1(G)$ be the Zariski closure in $\bbA^{n+1}$ of the set $U_1(G)(\bbC)$, and let $X_1(G)$ be the normalization of the projective closure of $Y_1(G)$. Note that if $G$ is generated by a single vertex of portrait $(M,N)$, then $X_1(G) = X_1((M,N))$, and similarly for $Y_1(G)$ and $U_1(G)$.

The assignment $G \longmapsto X_1(G)$ is (contravariant) functorial: If $G$ and $H$ are admissible graphs with $H \subseteq G$, there is a nonconstant map $X_1(G) \longto X_1(H)$ that commutes with projection onto the $c$-line; see \cite[Prop. 3.3]{doyle:2019}.

\begin{figure}
\centering
\begin{overpic}[scale=.5]{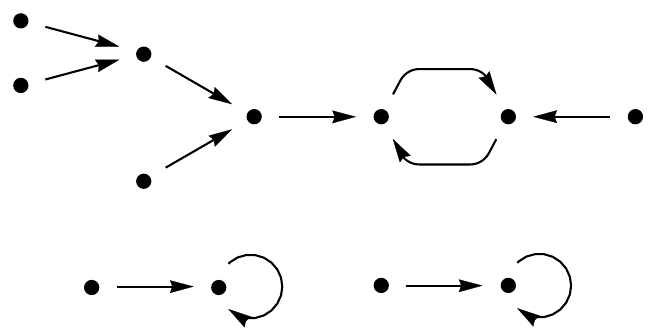}
	\put(44,0){$\alpha$}
	\put(-7,52){$\beta$}
	\put(110,0){$\alpha'$}
\end{overpic}
\caption{An admissible graph $G$}
\label{fig:U1_ex}
\end{figure}

We end this section with an example: The graph $G$ in Figure~\ref{fig:U1_ex} is strongly admissible and is minimally generated by the vertices $\alpha$, $\alpha'$, and $\beta$. Therefore, for a number field $K$ we have
	\begin{align*}
		U_1(G)(K) = \{(\alpha,\alpha',\beta,c) \in \bbA^4(K) : & \text{ $\alpha$ and $\alpha'$ are distinct fixed points for $f_c$;}\\
			& \text{ $\beta$ has portrait $(3,2)$ for $f_c$; and}\\
			& \text{ $0$ is not in the orbit of $\alpha$, $\alpha'$, or $\beta$ under $f_c$}\}.
	\end{align*}

\section{Quadratic points on algebraic curves}\label{sec:quad}

Let $K$ be a number field, and let $X$ be an algebraic curve defined over $K$. We say that $P \in X(\Kbar)$ is \textbf{quadratic over $K$} if the field of definition of $P$, denoted $K(P)$, is a quadratic extension of $K$. We will mostly be working in the situation that $K = \bbQ$, in which case we will simply say that $P$ is {\bf quadratic}.
If $X$ is an affine curve, then the \textbf{genus} of $X$, denoted $g(X)$, will be understood to be the geometric genus of $X$; i.e., the genus of the nonsingular projective curve birational to $X$. A {\bf Weierstrass point} on $X$ is a point $P \in X$ for which there exists a rational map of degree at most $g(X)$ vanishing only at $P$. (Equivalently, a Weierstrass point is one for which there exists a nonconstant rational map of degree at most $g(X)$ which is regular away from $P$).

\subsection{Hyperelliptic curves}\label{sec:hyp}
Several of the curves we consider in this paper are hyperelliptic. Recall that a hyperelliptic curve of genus $g$ defined over $K$ has an affine model of the form $y^2 = f(x)$ for some polynomial $f(x) \in K[x]$ of degree $2g + 1$ or $2g + 2$ with no repeated roots. We denote by $\iota$ the hyperelliptic involution on $X$:
	\[
		\iota(x,y) = (x,-y),
	\]
and we say that $\iota P$ is the {\bf hyperelliptic conjugate} of $P$. If $\deg f$ is odd, then $X$ has a single point at infinity, which is necessarily $K$-rational; if $\deg f$ is even, then $X$ has two points at infinity, and they are $K$-rational if and only if the leading coefficient of $f$ is a square in $K$. This can be seen by covering $X$ by two affine patches: The first is given by the equation $y^2 = f(x)$, and the second is given by $v^2 = u^{2g + 2}f(1/u)$, with the identification $x = 1/u$, $y = v/u^{g+1}$. If $\deg f$ is even, and if $c$ is the leading coefficient of $f$, then we take $\infty^\pm$ to be the two points on $X$ corresponding to $(u,v) = (0,\pm \sqrt{c})$. If $\deg f$ is odd, then we take $\infty^+ = \infty^- = \infty$ to be the unique point at infinity, given by $(u,v) = (0,0)$. In either case, we have $\iota \infty^\pm = \infty^\mp$.

Weierstrass points on hyperelliptic curves are simple to describe: they are precisely the ramification points for the double cover of $\bbP^1$ given by $(x,y) \longmapsto x$; equivalently, they are the fixed points of the hyperelliptic involution. More concretely, $P$ is a Weierstrass point on $X$ if and only if $P = (x,0)$ or $\deg f$ is odd and $P$ is the point at infinity. In particular, every hyperelliptic curve of genus $g$ has $2g+2$ Weierstrass points.

\subsubsection{Curves of genus $2$}
We now focus on curves of genus $2$; much of what follows may be found in \cite{cassels/flynn:1996}. Let $X$ be a curve of genus $2$ defined over a number field $K$. Since every genus $2$ curve $X$ is hyperelliptic, $X$ has an affine model of the form $y^2 = f(x)$ with $f(x) \in K[x]$ of degree $d \in \{5,6\}$ having no repeated roots. Let $J$ be the Jacobian of $X$. If we assume that $X$ has a $K$-rational point (this is guaranteed if $d = 5$), then we may identify the Mordell-Weil group $J(K)$ with the group of $K$-rational degree $0$ divisors of $X$ modulo linear equivalence (see \cite[p. 39]{cassels/flynn:1996} and \cite[p. 168]{milne:1986}, for example). For $n \in \bbN$, we set
	\[
		J(K)[n] := \{\calP \in J(K) : n\calP = \calO\},
	\]
and we let $J(K)_{\tors} := \bigcup_{n \in \bbN} J(K)[n]$ be the full torsion subgroup of $J(K)$.

The divisor $\infty^+ + \infty^-$ is a $K$-rational divisor on $X$, and the divisor class $\calK$ containing $\infty^+ + \infty^-$ is the canonical divisor class. By the Riemann-Roch theorem, every degree $2$ divisor class $\calD$ contains an effective divisor, and this effective divisor is unique if and only if $\calD \ne \calK$. The effective divisors in the canonical class $\calK$ are precisely those of the form $P + \iota P$. We may therefore represent every nontrivial element of $J(K)$ \emph{uniquely} by a divisor of the form $P + Q - \infty^+ - \infty^-$, up to reordering of $P$ and $Q$, where $P + Q$ is a $K$-rational divisor on $X$ (either $P$ and $Q$ are both $K$-rational points on $X$ or $P$ and $Q$ are Galois conjugate quadratic points on $X$). We therefore represent points of $J(K)$ as unordered pairs $\{P,Q\}$, with the identification
	\[ \{P,Q\} = \left[ P + Q - \infty^+ - \infty^- \right], \]
where $[D]$ denotes the divisor class of the divisor $D$. Note that $\{P,Q\} = \calO$ if and only if $[P + Q] = \calK$; that is, if and only if $P$ and $Q$ are hyperelliptic conjugates. It follows that $-\{P,Q\} = \{\iota P, \iota Q\}$, since
	\[
		\{P,Q\} + \{\iota P, \iota Q\}
			= \{P, \iota P\} + \{Q, \iota Q\} = \calO.
	\]
Note that we have a morphism $X \longto J$ obtained by mapping $P \longmapsto \{P,P\}$; unlike the standard Albanese map $P \longmapsto [P - P_0]$ (for a fixed base point $P_0$), this map is not an embedding, since every Weierstrass point maps to $\calO$.

The following statement regarding 2-torsion on genus 2 curves is well known:

\begin{lem}\label{lem:2torsion}
Let $X$ be a genus $2$ curve defined over a number field $K$, let $J$ be its Jacobian, and let $\{P_1,\ldots,P_6\}$ be the set of Weierstrass points on $X$. Then the set of points on $J$ of exact order $2$ is given by
	\[
		J(\Kbar)[2] \setminus \{\calO\} = \{\{P_i,P_j\} :  i \ne j\}.
	\]
\end{lem}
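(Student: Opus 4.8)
The plan is to work over $\Kbar$ and use the explicit description of the Jacobian of a genus $2$ curve recalled above: points of $J(\Kbar)$ are unordered pairs $\{P,Q\}$ of points of $X(\Kbar)$, with $\{P,Q\} = \mathcal{O}$ exactly when $P$ and $Q$ are hyperelliptic conjugates, and with $-\{P,Q\} = \{\iota P, \iota Q\}$. The key observation is simply that $\mathcal{P} \in J(\Kbar)[2]$ if and only if $\mathcal{P} = -\mathcal{P}$; writing $\mathcal{P} = \{P,Q\}$, this becomes the equality of unordered pairs $\{P,Q\} = \{\iota P, \iota Q\}$.

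First I would unpack this unordered equality into its two cases. Either $P = \iota Q$ (and hence $Q = \iota P$), in which case $\mathcal{P} = \{P, \iota P\} = \mathcal{O}$; or $P = \iota P$ and $Q = \iota Q$, in which case $P$ and $Q$ are both fixed by $\iota$, i.e.\ both are Weierstrass points, so $\mathcal{P} = \{P_i, P_j\}$ for some indices $i,j$. In the latter case, if $i = j$ then $P_i$ is its own hyperelliptic conjugate and $\mathcal{P} = \{P_i, P_i\} = \mathcal{O}$, while if $i \ne j$ then $P_i \ne P_j = \iota P_j$, so $P_i + P_j$ is not of the form $R + \iota R$, hence not canonical, hence $\{P_i, P_j\} \ne \mathcal{O}$. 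This proves $J(\Kbar)[2] \setminus \{\mathcal{O}\} \subseteq \{\{P_i,P_j\} : i \ne j\}$; the reverse inclusion is immediate, since $-\{P_i,P_j\} = \{\iota P_i, \iota P_j\} = \{P_i, P_j\}$ and $\{P_i,P_j\} \ne \mathcal{O}$ for $i \ne j$ by the above.

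To round things off I would check that the pairs $\{P_i,P_j\}$ with $i \ne j$ are pairwise distinct, so that these really are $\binom{6}{2} = 15$ distinct points of exact order $2$ (consistent with $\#J(\Kbar)[2] = 2^{2g} = 16$): if $\{P_i,P_j\} = \{P_k,P_l\}$ then $[P_i + P_j] = [P_k + P_l]$, and since $P_i + P_j$ is not canonical, Riemann--Roch forces this class to contain a unique effective divisor, whence $\{i,j\} = \{k,l\}$. There is no real obstacle here --- this is a bookkeeping lemma --- but the one spot to be careful is precisely the passage from the ordered relation $(P,Q) \mapsto (\iota P, \iota Q)$ to unordered pairs: overlooking the ``swap'' case $P = \iota Q$ or the degenerate case $i = j$ would either drop $\mathcal{O}$ or spuriously count it among the order-$2$ points.
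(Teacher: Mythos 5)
Your proof is correct, but it reaches the forward inclusion by a different route than the paper. The paper argues by counting: $J(\Kbar)[2]$ has exactly $2^{2g}-1=15$ nonzero elements, and there are exactly $\binom{6}{2}=15$ pairs $\{P_i,P_j\}$ with $i\ne j$, so it suffices to verify that each such pair is a nonzero point of order $2$ --- which is precisely your reverse inclusion, via $-\{P_i,P_j\}=\{\iota P_i,\iota P_j\}=\{P_i,P_j\}$ and $\{P_i,P_j\}\ne\calO$. You instead prove the forward inclusion directly, taking an arbitrary nonzero $\{P,Q\}$ with $\{P,Q\}=-\{P,Q\}=\{\iota P,\iota Q\}$ and splitting the unordered-pair equality into the swap case (which gives $\calO$) and the case where both points are $\iota$-fixed. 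Each approach leans on a fact the other makes explicit: the counting argument silently requires the $15$ pairs to be pairwise distinct as points of $J$ (your Riemann--Roch observation at the end supplies this), while your direct argument requires that for a non-canonical class the equality $[P+Q]=[\iota P+\iota Q]$ of divisor classes forces equality of the unordered pairs of points themselves --- again the uniqueness of the effective representative discussed in \textsection\ref{sec:hyp}. Both ingredients are available, so your argument is complete; it is slightly longer but self-contained, using $\#J(\Kbar)[2]=16$ only as a consistency check rather than as the engine of the proof.
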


\begin{proof}
The Jacobian $J$ has 15 points of order 2, and that is precisely the number of unordered pairs $\{P_i,P_j\}$ with $i \ne j$, so it suffices to show that each $\{P_i,P_j\}$ is a nonzero 2-torsion point. That $\{P_i,P_j\} \ne \calO$ follows from the fact that $\iota P_i = P_i \ne P_j$, and $\{P_i,P_j\}$ has order $2$ since
	\[
		-\{P_i,P_j\} = \{\iota P_i, \iota P_j\} = \{P_i, P_j\}.
	\]
\end{proof}

\subsection{Points on curves and their Jacobians after base extension}

\subsubsection{A sufficient condition for $X(L) = X(K)$}
Let $A$ be an abelian variety over a field $K$. If $L$ is an extension of $K$, then certainly $A(K) \subseteq A(L)$; in this section we give a sufficient condition for equality to hold. We then give a consequence for rational points on a curve $X$ over a number field $K$ upon base change to a finite Galois extension $L/K$.

If $G$ is a finitely generated abelian group and $H \subseteq G$ is a subgroup, then the \textbf{saturation} of $H$ in $G$ is the largest subgroup $H' \subseteq G$ containing $H$ such that $[H':H] < \infty$. We will say that $H$ is \textbf{saturated} in $G$ if the saturation of $H$ in $G$ is $H$ itself.

\begin{prop}\label{prop:saturation}
Let $K$ be a field, and let $A$ be an abelian variety defined over $K$ such that $A(K)$ is finitely generated. Let $L$ be a finite Galois extension of $K$ of degree $n := [L:K]$. Suppose that $A(L)_{\tors} = A(K)_{\tors}$ and $A(K)[n] = 0$. Then $A(K)$ is saturated in $A(L)$.
\end{prop}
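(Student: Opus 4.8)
The plan is to show that if $A(K)$ is not saturated in $A(L)$, then there is a point $Q \in A(L) \setminus A(K)$ with $mQ \in A(K)$ for some prime $m$, and to derive a contradiction from this using a Galois-cohomology / averaging argument together with the hypotheses on torsion. First I would set up the situation: suppose $A(K)$ is not saturated, so the saturation $A(K)'$ strictly contains $A(K)$, and since $A(K)$ has finite index in $A(K)'$ we can find a point $Q \in A(K)' \setminus A(K)$ such that $mQ \in A(K)$ for some prime $m$ (take a suitable multiple of any element of $A(K)' \setminus A(K)$). Note $Q \in A(L)$.

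The key step is to exploit the Galois action of $G := \Gal(L/K)$ on $Q$. For each $\sigma \in G$, the point ${}^\sigma Q - Q$ lies in $A(L)$ and satisfies $m({}^\sigma Q - Q) = {}^\sigma(mQ) - mQ = 0$ because $mQ \in A(K)$ is Galois-fixed; hence ${}^\sigma Q - Q \in A(L)[m]$. Now I would split into cases according to whether $m \mid n$. If $m \nmid n$, consider the trace element $T := \sum_{\sigma \in G} {}^\sigma Q \in A(K)$ (it is $G$-fixed). Then $T - nQ = \sum_{\sigma}({}^\sigma Q - Q)$ is a sum of $m$-torsion points, hence is itself $m$-torsion; writing $P := T - nQ \in A(L)[m]$, we have $P$ is $m$-torsion and, being a difference of an element of $A(K)$ and $nQ$, it lies in $A(L)_{\tors} = A(K)_{\tors}$ by hypothesis, so $P \in A(K)$. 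Then $nQ = T - P \in A(K)$, and since $\gcd(m,n) = 1$ we can write $am + bn = 1$ for integers $a,b$, giving $Q = a(mQ) + b(nQ) \in A(K)$, a contradiction. If instead $m \mid n$, then $A(K)[m] \subseteq A(K)[n] = 0$, so $A(K)$ has no $m$-torsion; I would argue that the class of $Q$ in $A(L)/A(K)$ is killed by $m$ and use that the map $\sigma \mapsto {}^\sigma Q - Q$ is a $1$-cocycle $G \to A(L)[m]$, whose class in $H^1(G, A(L)[m])$ must be trivial because $Q$ can be adjusted by an element of $A(K)$ — more carefully, I would instead observe that $A(K)[m] = 0$ forces the $m$-torsion subgroup relevant to the cocycle to interact trivially, and reduce again to $nQ \in A(K)$ plus the coprimality trick applied after pulling out the $m$-part.

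On reflection, the cleanest uniform argument avoiding case analysis is: let $Q \in A(L)\setminus A(K)$ with $mQ \in A(K)$, $m$ prime. The cocycle $c \colon \sigma \mapsto {}^\sigma Q - Q$ takes values in $A(L)[m]$. Its class in $H^1(G, A(L)[m])$ is trivial if and only if $Q$ differs from an element of $A(K)$ by an $m$-torsion point of $A(L)$; but $A(L)[m] = A(L)_\tors[m] = A(K)_\tors[m] = A(K)[m]$ (using $A(L)_\tors = A(K)_\tors$), and $A(K)[m] \subseteq A(K)[n]$ when $m\mid n$ while the coprime case is handled by the trace as above — so in every case triviality of the class yields $Q \in A(K) + A(L)[m] = A(K)$, contradiction. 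The main obstacle, and the step I would spend the most care on, is justifying that the cocycle class is trivial: this is where $A(K)[n] = 0$ enters, via the standard fact that $H^1(G, M)$ is annihilated by $n = \#G$, so if $M = A(L)[m]$ has exponent dividing $\gcd(m,n)$-relevant torsion then the relevant cohomology vanishes; I would make this precise by noting $n \cdot H^1(G, A(L)[m]) = 0$, and separately that the cocycle $c$ is also $m$-valued, and combine these with the torsion hypotheses to force $c$ to be a coboundary. Once $c = \sigma \mapsto {}^\sigma R - R$ for some $R \in A(L)[m] \subseteq A(K)$ (the containment being exactly the torsion hypothesis together with $A(K)[n]=0$ in the $m\mid n$ case), we get $Q - R \in A(K)$, hence $Q \in A(K)$, the desired contradiction.

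I expect the subtlety to be bookkeeping the two hypotheses $A(L)_\tors = A(K)_\tors$ and $A(K)[n] = 0$ so that they combine to kill the obstruction in both the $m \mid n$ and $m \nmid n$ cases; the coprime case is elementary (trace map), and the non-coprime case is where $A(K)[n] = 0$ is essential, forcing $A(L)[m] \subseteq A(K)$ to actually be $0$-or-inside-$A(K)$ and hence making the cocycle a coboundary.
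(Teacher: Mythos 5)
Your proof is correct, and the argument is close in spirit to the paper's, but it is organized differently in a way worth pointing out. The paper never reduces to a prime. It takes an arbitrary $\calP$ in the saturation, with $\ell\calP\in A(K)$, and notes that $\sigma\calP-\calP$ is $\ell$-torsion, hence lies in $A(L)_\tors=A(K)_\tors$ and is therefore $\Gal(L/K)$-fixed. Summing $\tau(\sigma\calP-\calP)$ over $\tau\in G$ then telescopes to zero on one side while equaling $n(\sigma\calP-\calP)$ on the other (since the element is already Galois-fixed), so $\sigma\calP-\calP\in A(K)[n]=0$ in one stroke, with no case split. You instead pass to a prime $m$ with $mQ\in A(K)$ and argue by contradiction, handling $m\nmid n$ by the trace plus B\'ezout and $m\mid n$ by observing $A(L)[m]=A(K)[m]\subseteq A(K)[n]=0$. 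Both cases are sound: the reduction to prime order in $A(K)'/A(K)$, the identification of $A(L)[m]$ with $A(K)[m]$ via the torsion hypothesis, and the trace/coprimality step are all correct. What the paper's version buys you is uniformity --- the case distinction disappears, because the object $\sigma\calP-\calP$ is directly shown to be Galois-fixed torsion killed by $n$, so there is no need to separate out the prime-to-$n$ part. The cohomological digression in your ``cleanest uniform argument'' paragraph is where you get tangled; it is not needed, since in your $m\mid n$ case the cocycle $\sigma\mapsto{}^\sigma Q-Q$ lands in $A(L)[m]=0$ and is therefore identically zero, not merely a coboundary. If you tighten that paragraph, or simply drop it in favor of the two-case argument you already gave, the proof stands on its own.
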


\begin{rem}
If $K$ is finitely generated over its prime subfield, for example, then N\'{e}ron's generalization \cite{neron:1952} of the Mordell-Weil theorem states that $A(K)$ is necessarily finitely generated.
\end{rem}

\begin{proof}[Proof of Proposition~\ref{prop:saturation}]
Let $\calP \in A(L)$ lie in the saturation of $A(K)$; we claim that $\calP \in A(K)$. Let $\sigma \in G := \Gal(L/K)$ be arbitrary. Since $\calP$ lies in the saturation of $A(K)$, there exists some $\ell \in \bbZ$ for which $\ell \calP \in A(K)$, and therefore $\sigma(\ell \calP) = \ell \calP$. Writing this as $\ell (\sigma \calP - \calP) = 0$ shows that $\sigma \calP - \calP$ must be a torsion element of $A(L)$. Since $A(L)_{\tors} = A(K)_{\tors}$, $\sigma \calP - \calP$ must in fact be $K$-rational, so $\tau(\sigma \calP - \calP) = \sigma \calP - \calP$ for all $\tau \in G$. Thus
	\begin{equation}\label{eq:GaloisEq}
		n(\sigma \calP - \calP)
			= \sum_{\tau \in G} (\sigma \calP - \calP)
			= \sum_{\tau \in G} \tau\left(\sigma \calP - \calP\right)
			= \sum_{\tau \in G} \tau\sigma \calP - \sum_{\tau \in G} \tau \calP
			= 0	
	\end{equation}
Since we assumed that $A(K)[n] = 0$, it follows that $\sigma \calP = \calP$. Since this holds for all $\sigma \in G$, we have $\calP \in A(K)$.
\end{proof}

\begin{cor}\label{cor:saturation_samerank}
Let $K$ be a field, and let $A$ be an abelian variety defined over $K$ such that $A(K)$ is finitely generated. Let $L$ be a finite Galois extension of $K$ of degree $n := [L:K]$. Suppose that $\rk A(L) = \rk A(K)$, $A(L)_{\tors} = A(K)_{\tors}$, and $A(K)[n] = 0$. Then $A(L) = A(K)$.
\end{cor}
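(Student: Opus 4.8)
The plan is to derive Corollary~\ref{cor:saturation_samerank} from Proposition~\ref{prop:saturation} by an elementary rank comparison. First I would apply Proposition~\ref{prop:saturation}: its hypotheses ($A(K)$ finitely generated, $A(L)_{\tors} = A(K)_{\tors}$, and $A(K)[n] = 0$) are all among those assumed in the Corollary, so $A(K)$ is saturated in $A(L)$. Concretely---and this is all that the proof of the Proposition actually produces---every element $\calP \in A(L)$ having a nonzero integer multiple in $A(K)$ must itself lie in $A(K)$; equivalently, the quotient $Q := A(L)/A(K)$ is torsion-free.

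Next I would show that $Q$ is a torsion group, using the remaining hypothesis $\rk A(L) = \rk A(K)$. Tensoring the short exact sequence $0 \to A(K) \to A(L) \to Q \to 0$ with $\bbQ$, which is flat over $\bbZ$, gives an exact sequence $0 \to A(K) \otimes \bbQ \to A(L) \otimes \bbQ \to Q \otimes \bbQ \to 0$ of $\bbQ$-vector spaces. The first two terms have the same finite dimension, namely $\rk A(K) = \rk A(L)$, so the injection between them is an isomorphism, whence $Q \otimes \bbQ = 0$; thus $Q$ is torsion. Since $Q$ is simultaneously torsion and torsion-free, it is trivial, and therefore $A(L) = A(K)$.

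I do not expect a serious obstacle here, since the substance is entirely contained in Proposition~\ref{prop:saturation} and the remainder is formal. The one point deserving a moment's care is whether $A(L)$ is finitely generated, which is what makes the word ``saturated'' literally meaningful in the statement of the Proposition; I would handle this either by appealing directly to the divisibility statement that the \emph{proof} of Proposition~\ref{prop:saturation} establishes (which requires no finite generation of $A(L)$), or---since in all applications $K$ is a number field---by invoking N\'eron's theorem to conclude that $A(L)$ is finitely generated as well.
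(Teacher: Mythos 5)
Your proof is correct and is essentially the argument the paper gives: both deduce from the rank equality that $A(L)/A(K)$ is torsion (the paper phrases this as finiteness of the index $[A(L):A(K)]$) and then invoke Proposition~\ref{prop:saturation} to conclude. Your extra remark about not needing finite generation of $A(L)$ is a fair, if minor, point of additional care.
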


\begin{proof}
Since $A(K)$ and $A(L)$ have the same rank, the index $[A(L):A(K)]$ is finite. Therefore $A(L)$ is the saturation of $A(K)$ in $A(L)$, so $A(L) = A(K)$ by Proposition~\ref{prop:saturation}.
\end{proof}

\begin{ex}
It is clear that the conditions $\rk A(L) = \rk A(K)$ and $A(L)_{\tors} = A(K)_{\tors}$ are necessary for the conclusion of Corollary~\ref{cor:saturation_samerank}. We now give an example to show that we cannot, in general, omit the restriction on the $n$-torsion. Let $K = \bbQ$, let $A$ be the elliptic curve defined by $y^2 + xy = x^3 - x$, and let $L = \bbQ(\sqrt{5})$. This curve appears as 65A1 in Cremona's table \cite{cremona:1997}, where one finds that $\rk A(\bbQ) = 1$ and $A(\bbQ)_{\tors} \cong \bbZ/2\bbZ$; in particular, the $n$-torsion condition fails in this case. A computation in \cite[{\tt main.txt}]{doyle:cyclotomic} shows that $\rk A(L) = \rk A(\bbQ) = 1$ and $A(L)_{\tors} = A(\bbQ)_{\tors} \cong \bbZ/2\bbZ$. However, we have $\left(\frac{1 + \sqrt{5}}{2}, 1 \right) \in A(L) \setminus A(\bbQ)$.
\end{ex}

We will ultimately apply Corollary~\ref{cor:saturation_samerank} to Jacobian varieties of curves. If $J$ is the Jacobian of a curve $X$, then knowing that $J(L) = J(K)$ essentially determines $X(L)$ from $X(K)$. We make this precise with the following proposition:

\begin{prop}\label{prop:same_jac}
Let $X$ be a curve of genus $g \ge 2$ defined over a number field $K$, and let $J$ be the Jacobian of $X$. Let $L/K$ be a Galois extension, and suppose that $J(L) = J(K)$. Then one of the following must be true:
	\begin{enumerate}
	\item $X(L) = X(K)$, or
	\item $X$ is hyperelliptic, $X(K) = \emptyset$, and $X(L)$ consists entirely of Weierstrass points.
	\end{enumerate}
\end{prop}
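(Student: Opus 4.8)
The plan is to split on whether $X$ has a $K$-rational point. Suppose first that $X(K) \neq \emptyset$, and fix a base point $P_0 \in X(K)$. Since $g \ge 2 \ge 1$, the Abel--Jacobi map $j\colon X \to J$, $P \mapsto [P - P_0]$, is an embedding defined over $K$. Given $P \in X(L)$, the point $j(P)$ lies in $J(L) = J(K)$ and is therefore fixed by $\Gal(L/K)$; since $j$ is defined over $K$ we have $j(\sigma P) = \sigma(j(P)) = j(P)$ for every $\sigma \in \Gal(L/K)$, and injectivity of $j$ forces $\sigma P = P$. Hence $P \in X(K)$, so $X(L) = X(K)$ and we land in conclusion~(1).

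It remains to treat the case $X(K) = \emptyset$. If also $X(L) = \emptyset$ then $X(L) = X(K)$ and (1) holds, so assume $X(L) \neq \emptyset$. The key observation is that for any two points $A, B \in X(L)$ the degree-$0$ class $[A - B]$ lies in $J(L) = J(K)$ and is therefore $\Gal(L/K)$-fixed; applying $\sigma^{-1}$ and rearranging yields the linear equivalence
\[
	\sigma^{-1}A + B \ \sim\ \sigma^{-1}B + A \qquad \text{for all } A, B \in X(L),\ \sigma \in \Gal(L/K).
\]
Now let $P \in X(L)$ be arbitrary. Since $X(K) = \emptyset$, the point $P$ is not fixed by all of $\Gal(L/K)$, so choose $\sigma$ with $\sigma P \neq P$ and apply the displayed equivalence with $A = \sigma P$, $B = P$ to get $2P \sim \sigma^{-1}P + \sigma P$. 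These are effective divisors of degree $2$, and they are distinct, since $\sigma^{-1}P + \sigma P = 2P$ would force $\sigma P = P$. Hence the complete linear system $|2P|$ has positive dimension. A degree-$2$ linear system of positive dimension on a curve of genus $\ge 2$ is necessarily base-point free (a degree-$1$ pencil would force $X \cong \bbP^1$) and equals the unique $g^1_2$; thus $X$ is hyperelliptic, and a nonconstant function with a double pole supported only at $P$ exhibits $P$ as a ramification point of the hyperelliptic map, i.e.\ a Weierstrass point. Since $P$ was arbitrary, $X(L)$ consists entirely of Weierstrass points, which—together with $X(K) = \emptyset$ and hyperellipticity of $X$—is conclusion~(2).

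I expect the proof to be short, with the only mild point of care being the $X(K) = \emptyset$ case: there one cannot pick a $K$-rational base point for an Abel--Jacobi map, and the remedy is to argue directly with differences $[A - B]$ of $L$-rational points, which automatically become $K$-rational divisor classes once $J(L) = J(K)$. The remaining input—that a positive-dimensional degree-$2$ linear system on a curve of genus $\ge 2$ realizes the curve as hyperelliptic with that system the hyperelliptic pencil—is classical (Clifford's theorem, or a direct Riemann--Roch argument), so I do not anticipate a serious obstacle.
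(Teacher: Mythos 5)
Your proposal is correct and follows essentially the same route as the paper: the first case is the paper's observation that a $K$-rational base point plus $J(L)=J(K)$ forces $X(L)=X(K)$ (the paper phrases it as a contradiction via $[\sigma P - P]=0$ rather than via the Abel--Jacobi embedding, but it is the same computation), and the second case derives the identical linear equivalence $2P \sim \sigma P + \sigma^{-1}P$ to conclude hyperellipticity and that $P$ is a Weierstrass point.
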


\begin{proof}
Assume $X(L) \supsetneq X(K)$, and let $P \in X(L) \setminus X(K)$.

First, suppose for contradiction that there is a point $Q \in X(K)$. Since $[P - Q] \in J(L) = J(K)$, we have
	\[
		[\sigma P - Q] = [P - Q]^\sigma = [P - Q]
	\]
for all $\sigma \in \Gal(L/K)$. This implies that $[\sigma P - P]$ is the trivial divisor class for all $\sigma \in \Gal(L/K)$; since $g > 0$, it must be that $\sigma P = P$ for all $\sigma \in \Gal(L/K)$, contradicting our assumption that $P \notin X(K)$. Therefore, $X(K) = \emptyset$.

Now, choose any $\sigma \in \Gal(L/K)$ for which $\sigma P \ne P$. Since $[\sigma P - P] \in J(L) = J(K)$, we have
	\[
		[P - \sigma^{-1}P] = [\sigma P - P]^{\sigma^{-1}} = [\sigma P - P],
	\]
hence $[2P - \sigma P - \sigma^{-1} P]$ is the trivial class. By assumption, neither $\sigma P$ nor $\sigma^{-1} P$ is equal to $P$, so there is a degree 2 rational map $f$ on $X$ that vanishes only at $P$. The fact that $\deg f = 2$ implies that $X$ is hyperelliptic, and the fact that $\deg f \le g$ implies that $P$ is a Weierstrass point. 
\end{proof}

\begin{ex}
We give an example to show that the situation described in part (B) of Proposition~\ref{prop:same_jac} does occur. Let $X$ be the genus 2 curve given by
	\[
		y^2 = (x^2 + 1)(2x^4 + x^3 + 2x^2 + 2x + 2),
	\]
and let $L = \bbQ(i)$. Then $J(L) = J(\bbQ) = \{\calO, \{P^+, P^-\}\}$, where $P^{\pm} = (\pm i, 0)$ are the two $L$-rational Weierstrass points on $X$. In this case, we have $X(\bbQ) = \emptyset$ and $X(L) = \{P^+,P^-\}$.
\end{ex}

\subsubsection{Mordell-Weil ranks and quadratic twists}

Finally, we record a lemma which we use repeatedly for calculating ranks of Jacobian varieties over quadratic fields. Nothing in this section is new, but we include full details for completeness.

Let $K$ be a number field, let $L = K(\sqrt{d})$ be a quadratic extension of $K$, and let $\chi$ be the usual quadratic character defined by
	\begin{align*}
	\chi : \Gal(\Kbar/K) &\longto \{\pm 1\}\\
		\sigma &\longmapsto \sigma(\sqrt{d})/\sqrt{d}.
	\end{align*}
If $A$ is an abelian variety defined over $K$, then the {\it quadratic twist} of $A$ by $d$ is an abelian variety $A^{(d)}$ which is not isomorphic to $A$ over $K$, but for which there is an $L$-isomorphism
	\[
		\phi : A^{(d)} \longto A
	\]
such that $\phi^\sigma = \chi(\sigma) \cdot \phi$ for all $\sigma \in \Gal(\Kbar/K)$. See \cite[\textsection X.2]{silverman:2009} for details about twists in the case that $A$ is an elliptic curve and \cite[\textsection C.5]{hindry/silverman:2000} for twists of quasiprojective varieties in general.
We note that if $\tau$ is the generator for $\Gal(L/K)$, then the condition that $\phi^\tau = -\phi$ implies that
	\begin{equation}\label{eq:twist}
	\phi\left(A^{(d)}(K)\right) = \{\calP \in A(L) \mid \tau\calP = -\calP\}.
	\end{equation}

Without loss of generality, we may assume $\phi(0) = 0$. Indeed, setting $\tilde{\phi} = \phi - \phi(0)$ we have
	\begin{align*}
		\tilde{\phi}^\sigma &= \phi^\sigma - \sigma\phi(0)\\
			&= \phi^\sigma - \phi^\sigma(0) \qquad\qquad \text{(since $0 \in A(K)$)}\\
			&= \chi(\sigma)(\phi - \phi(0))\\
			&= \chi(\sigma)\tilde{\phi}.
	\end{align*}
We henceforth assume $\phi(0) = 0$, thus $\phi$ is a group homomorphism from $A^{(d)}(\Kbar) \longto A(\Kbar)$, hence a group isomorphism; see \cite[\textsection II, Cor. 1]{mumford:1970}, for example.

We are interested in the particular case that $A$ is the Jacobian of a (hyper)elliptic curve $X$ defined by $y^2 = f(x)$, in which case $A^{(d)}$ is the Jacobian of the curve $X^{(d)}$ defined by $dy^2 = f(x)$, and the isomorphism $\phi$ is induced by the isomorphism of curves given by
	\begin{align*}
		X^{(d)} &\longto X\\
		(x,y) &\longmapsto \left(x, y\sqrt{d}\right).
	\end{align*}

It is well known that the rank of $A(L)$ is the sum of the ranks of $A(K)$ and $A^{(d)}(K)$; however, we include the proof for completeness, and we thank Pete Clark for pointing us toward this argument.

\begin{lem}\label{lem:rank_twist}
Let $K$ be a number field, and let $L = K(\sqrt{d})$ be a quadratic extension. Let $\chi$ denote the quadratic character associated to $L/K$. Let $A$ be an abelian variety defined over $K$, let $A^{(d)}$ be its quadratic twist by $d$, and let
	\[
		\phi : A^{(d)} \longto A
	\]
be an $L$-isomorphism satisfying $\phi(0) = 0$ and $\phi^\sigma = \chi(\sigma) \cdot \phi$ for all $\sigma \in \Gal(\Kbar/K)$. Consider the group homomorphism
	\begin{align*}
		\psi : A(K) \oplus A^{(d)}(K) &\longto A(L)\\
			(\calP, \calQ) &\longmapsto \calP + \phi(\calQ).
	\end{align*}
Then the kernel ($\ker \psi$) and cokernel ($\coker \psi$) are finite and, moreover, the exponent of $\coker \psi$ is at most $2$. It follows that
	\[
		\rk A(L) = \rk A(K) + \rk A^{(d)}(K).
	\]
\end{lem}

\begin{proof}
Let $\tau$ be the generator for $\Gal(L/K)$. First, suppose $(\calP, \calQ) \in \ker \psi$, so that $\phi(\calQ) = -\calP$. Since $\calQ \in A^{(d)}(K)$, we can apply \eqref{eq:twist} to see that
	\[
		\calP = -\phi(\calQ) = \tau\phi(\calQ) = \tau(-\calP) = -\calP,
	\]
where the last equality follows from the fact that $\calP$ is $K$-rational. Thus $\calP$ lies in the finite subgroup $A(K)[2]$; since $\calQ = \phi^{-1}(-\calP)$ is determined by $\calP$, it follows that $\psi$ has finite kernel.

To show that the cokernel of $\psi$ has exponent $2$, we must show that for all $\calR \in A(L)$, $2\calR$ is in the image of $\psi$. Let $\calP := \calR + \tau\calR$ and $\calQ := \phi^{-1}(\calR - \tau\calR)$. The point $\calP$ is fixed by $\tau$, hence is $K$-rational; on the other hand, $\calR - \tau\calR$ is negated by $\tau$, so $\calQ \in A^{(d)}(K)$ by \eqref{eq:twist}. Finally, we have $\psi(\calP, \calQ) = 2\calR$, so $2\calR$ is in the image of $\psi$, as claimed. Since the Mordell-Weil group $A(L)$ is finitely generated and abelian, so must be $\coker \psi$; thus, the fact that $\coker\psi$ has finite exponent implies that $\coker\psi$ is actually finite.

Finally, since the kernel of $\psi$ is finite and the image of $\psi$ has finite index in $A(L)$, we have
	\begin{align*}
		\rk A(L) &= \rk \psi\left(A(K) \oplus A^{(d)}(K)\right)\\
			&= \rk \left(A(K) \oplus A^{(d)}(K)\right)\\
			&= \rk A(K) + \rk A^{(d)}(K).
	\end{align*}
\end{proof}

\section{Dynamical modular curves of genus at most $2$}\label{sec:genus_gonality}

Because we are concerned with dynamics over quadratic fields, it would be useful to know for which admissible graphs $G$ we should expect $X_1(G)$ to have infinitely many quadratic points. By \cite[Cor. 3]{harris/silverman:1991}, this is equivalent to asking for which admissible graphs $G$ the curve $X_1(G)$ is rational, elliptic, hyperelliptic, or admits a degree 2 morphism to an elliptic curve with positive Mordell-Weil rank over $\bbQ$. In \cite{doyle/krumm/wetherell}, we show that the only such curves $X_1(G)$ are those of genus at most 2; see \cite{ogg:1974, jeon/kim:2004} for similar results for classical modular curves.

In this section, we analyze the torsion subgroups of the Jacobians of the curves $X_1(G)$ of genus at most 2. This analysis, together with Proposition~\ref{prop:same_jac}, will be used in \textsection \ref{sec:specific} to determine the set of $K$-rational points on $X_1(G)$ for certain graphs $G$, with $K = \bbQ(i)$ and $K = \bbQ(\omega)$.

\begin{rem}\label{rem:strongly}
When considering the curves $X_1(G)$ for admissible graphs $G$, we lose no generality by restricting to {\it strongly} admissible graphs. Let $G$ be an admissible graph which is not strongly admissible, which implies that $G$ has a single fixed point. Let $G'$ be the strongly admissible graph obtained from $G$ by adjoining a second fixed point (and, necessarily, its nonperiodic preimage). Then the curve $X_1(G)$ is isomorphic to $X_1(G')$: Indeed, let $K_G/\bbC(c)$ and $K_{G'}/\bbC(c)$ be the function fields of the curves $X_1(G)$ and $X_1(G')$, respectively, in a common algebraic closure of $\bbC(c)$. (Here, we are taking $c$ to be an indeterminate over $\bbC$.) Then $K_{G'}$ is generated over $K_G$ by a root of $\Phi_1(x,c) = x^2 - x + c$; however, $G$ already has one fixed point, hence $K_G$ already contains a root of $\Phi_1(x,c)$, and therefore $K_G$ contains {\it both} roots of $\Phi_1(x,c)$. It follows that $K_{G'} = K_G$, thus $X_1(G') \cong X_1(G)$.
\end{rem}

Since every dynamical modular curve of genus $0$ already has a rational point---hence is isomorphic over $\bbQ$ to $\bbP^1$---we restrict our attention to dynamical modular curves of genus $1$ or $2$. For such curves, we will be interested in determining the torsion subgroups $J_1(G)(K)_{\tors}$ as $K$ ranges over all quadratic extensions $K/\bbQ$.
In order to do so, we require a complete list of all dynamical modular curves of genus $1$ or $2$.

\begin{prop}\label{prop:all_dyn_mod_curves}
Let $G$ be a strongly admissible graph.
	\begin{enumerate}
		\item The curve $X_1(G)$ has genus 0 if and only if $G$ is isomorphic to one of the following:
		\begin{center}
		\rm 4(1,1), 4(2), 6(1,1), 6(2), 6(3), 8(2,1,1).
		\end{center}
		\item The curve $X_1(G)$ has genus 1 if and only if $G$ is isomorphic to one of the following:
		\begin{center}
		\rm 8(1,1)a, 8(1,1)b, 8(2)a, 8(2)b, 10(2,1,1)a, 10(2,1,1)b.
		\end{center}
		\item The curve $X_1(G)$ has genus 2 if and only if $G$ is isomorphic to one of the following:
		\begin{center}
		\rm 8(3), 8(4), 10(3,1,1), 10(3,2).
		\end{center}
	\end{enumerate}
\end{prop}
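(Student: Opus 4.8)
The plan is to prove Proposition~\ref{prop:all_dyn_mod_curves} by first reducing the problem to a finite computation and then carrying it out. Since the assignment $G \mapsto X_1(G)$ is contravariant functorial and a nonconstant map of curves cannot decrease genus, any strongly admissible graph $G$ with $g(X_1(G)) \le 2$ must have \emph{every} admissible subgraph $H \subseteq G$ satisfying $g(X_1(H)) \le 2$ as well. Conversely, once a graph $G$ contains a subgraph $H$ with $g(X_1(H)) \ge 3$, it is disqualified. So the first step is to identify a set of ``forbidden'' minimal subgraphs: single-vertex graphs generated by a periodic point of period $N$ (i.e.\ $X_1(N)$), single-vertex graphs generated by a preperiodic point of portrait $(M,N)$ (i.e.\ $X_1((M,N))$), and a few small two-generator graphs (disjoint cycles, or a cycle together with a tail). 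One computes the genus of $X_1(N)$ and $X_1((M,N))$ from the dynatomic polynomials $\Phi_N$ and $\Phi_{M,N}$ via the degree formula for $d(N)$ together with a genus computation (e.g.\ using the known ramification of the projection to the $c$-line, or simply a direct Riemann--Hurwitz/adjunction computation on the plane model and its normalization). The key inputs here are standard and presumably recorded earlier or in \cite{doyle:2019, morton:1998}: $X_1(1), X_1(2), X_1(3)$ are rational; $X_1(4)$ has genus $2$; $X_1(5)$ has genus $14$; $X_1((M,N))$ grows quickly in genus as well. This immediately bounds the periods and portraits that can appear, and bounds the ``complexity'' of admissible graphs we need to consider.

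Next I would enumerate, up to isomorphism, all strongly admissible graphs built only from the allowed cycle lengths and portraits, pruning aggressively using the functoriality bound: as soon as a partially built graph contains a subgraph of genus $\ge 3$, discard it. Strong admissibility (condition (c): two fixed points if any) and condition (b) (at most $r(N)$ $N$-cycles) keep this list finite and short. For each surviving candidate graph $G$, one writes down the defining equations of $Y_1(G)$ as a subscheme of affine space cut out by the relevant (generalized) dynatomic polynomials evaluated at the generating vertices, computes a smooth projective model $X_1(G)$, and computes its genus --- in practice via Magma, as the paper notes. One then sorts the results into the genus-$0$, genus-$1$, and genus-$2$ buckets, which should reproduce exactly the three lists in the statement. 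The ``only if'' direction is precisely this exhaustive classification; the ``if'' direction is the explicit genus computation for each of the sixteen named graphs.

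The main obstacle I expect is the bookkeeping in the enumeration: correctly listing strongly admissible graphs up to isomorphism without omissions or duplicates, and making sure the functoriality-based pruning is applied soundly (one must be careful that $H \subseteq G$ as \emph{admissible} subgraphs, and that $X_1(H)$ really does receive a nonconstant map from $X_1(G)$, which is exactly \cite[Prop. 3.3]{doyle:2019}). A secondary subtlety is that a graph and its generating set must be handled correctly: the curve $X_1(G)$ depends only on $G$, not on the chosen minimal generating set, but the equations depend on the generating set, so one should either fix a canonical choice or verify independence. There is also the mild technical point from Remark~\ref{rem:strongly} that we may restrict to strongly admissible graphs without loss, since adjoining a second fixed point does not change the curve --- this is what lets us phrase the whole classification in terms of strongly admissible $G$. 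Beyond that, the genus computations themselves are routine (if occasionally heavy) commutative algebra, best delegated to Magma with the output recorded in the ancillary files.
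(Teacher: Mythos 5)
Your proposal is essentially the paper's argument: use the functoriality $G \mapsto X_1(G)$ together with Riemann--Hurwitz to prune, feed in the known genera of $X_1(n)$ (via Bousch's formula, which kills all $n \ge 5$ and leaves $X_1(4)$ of genus $2$) and of $X_1(3,3)$ (Morton, genus $4$) to restrict the possible cycle structures, then finitely enumerate the surviving strongly admissible graphs and compute each genus explicitly in Magma. The one refinement worth making explicit is that the covering maps $X_1(G') \to X_1(G)$ for $G \subsetneq G'$ have degree at least $2$ (this is the content of \cite[Prop.\ 3.3]{doyle:2019}), so by Riemann--Hurwitz the genus \emph{strictly} exceeds $2$ for any proper supergraph of a genus-$2$ graph; pruning only at genus $\ge 3$, as you state it, does not by itself guarantee the enumeration terminates.
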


\begin{proof}
The genera of $X_1(G)$ for those graphs $G$ listed in the statement of the proposition were determined in earlier papers, specifically \cite{walde/russo:1994, morton:1992, morton:1998, poonen:1998}. Thus, we need only show that if $G$ is any strongly admissible graph {\it not} listed, then $g(X_1(G)) > 2$.

The curves $X_1(G)$ naturally form an inverse system, with maps $X_1(G') \longto X_1(G)$ whenever $G \subseteq G'$. Moreover, if $G \subsetneq G'$, the corresponding map of dynamical modular curves has degree at least $2$; these two statements form the content of \cite[Prop. 3.3]{doyle:2019}. Thus, once we have $g(X_1(G)) \ge 2$, it follows from Riemann-Hurwitz that $g(X_1(G')) > 2$ for all $G' \supsetneq G$.

Bousch \cite{bousch:1992} gave an explicit formula for the genera of the curves $X_1(n)$; from that formula, one sees that $g(X_1(n))$ grows on the order of $n2^n$ as $n \to \infty$. The values of $g(X_1(n))$ for small values of $n$ are shown in Table~\ref{tab:genus}, and using Bousch's formula one can verify that $X_1(n)$ has genus greater than $2$ when $n > 4$. It follows that if $X_1(G)$ has genus at most $2$, then either $G \cong {\rm8(4)}$ (the minimal admissible graph with a 4-cycle), in which case $g(X_1(G)) = 2$, or $G$ only contains cycles of length 1, 2, or 3.

\begin{table}
\renewcommand{\arraystretch}{1.5}
\caption{Genera of $X_1(n)$ for small values of $n$}
\label{tab:genus}
	\begin{tabular}{|c|c|c|c|c|c|c|c|c|}
		\hline
		$n$ & 1 & 2 & 3 & 4 & 5 & 6 & 7 & 8\\ \hline
		$g(X_1(n))$ & 0 & 0 & 0 & 2 & 14 & 34 & 124 & 285 \\ \hline
	\end{tabular}
\end{table}

A quadratic polynomial necessarily has at most two fixed points, a single $2$-cycle, and two $3$-cycles. However, Morton \cite{morton:1992} showed that the curve $X_1(3,3)$, which parametrizes maps $f_c$ together with a pair of marked points of period $3$ with {\it disjoint} orbits, has genus $4$. Thus, if $G$ is a strongly admissible graph with $g(X_1(G)) \le 2$, then either $G \cong \rm8(4)$ or the cycle structure of $G$ (defined immediately following Lemma~\ref{lem:admissible}) is one of the following:
\begin{equation}\label{eq:CycleStructures}
	\rm (1,1),\ (2),\ (3),\ (1,1,2),\ (1,1,3),\ (2,3).
\end{equation}

All strongly admissible graphs with eight vertices are listed in the statement of the proposition; in particular, for each such graph $G$ we have $g(X_1(G)) \le 2$. There are only twelve strongly admissible $10$-vertex graphs with one of the cycle structures given above; we list these graphs in Table~\ref{tab:TenVertex}, together with the genera of their dynamical modular curves:\footnote{Given a model for each curve, Magma can easily compute its genus. Models appear in \cite{poonen:1998, doyle/faber/krumm:2014, doyle:2018quad, doyle/krumm/wetherell}.}

\begin{table}
\renewcommand{\arraystretch}{1.5}
\caption{Ten-vertex graphs with cycle structures from \eqref{eq:CycleStructures}}
\label{tab:TenVertex}
	\begin{tabular}{|c|ccccccccc|}\hline
	$G$ & 10(1,1)a/b & 10(2) & 10(3)a/b & 10(2,1,1)a/b & 10(3,1,1) & 10(3,2) & $G_1$ & $G_2$ & $G_3$\\\hline
	$g(X_1(G))$ & 5 & 5 & 9 & 1 & 2 & 2 & 5 & 5 & 5\\\hline
	\end{tabular}
\end{table}

Now suppose $G$ were a graph that did not appear in the statement of the proposition but for which $X_1(G)$ had genus at most $2$. Then $G$ would have to properly contain $\rm 10(2,1,1)a$ or $\rm 10(2,1,1)b$. Moreover, since $\rm 10(3,1,1)$ and $\rm 10(3,2)$ are the smallest admissible graphs containing points of period $1$ and period $3$ (resp., period $2$ and period $3$), and since their curves have genus $2$, $G$ cannot also have a $3$-cycle. Thus, $G$ must have cycle structure $\rm (1,1,2)$. Any admissible graph of cycle structure $\rm (1,1,2)$ that properly contains $\rm 10(2,1,1)a$ or $\rm 10(2,1,1)b$ must contain one of $\rm 12(2,1,1)a/b$, $G_4$, $G_5$, or $G_6$. However, the dynamical modular curve associated to each of these graphs has genus $5$; see \cite{doyle/faber/krumm:2014, doyle:2018quad}. Therefore, the proposition lists all strongly admissible graphs $G$ such that $g(X_1(G)) \le 2$.
\end{proof}

We include in Appendix~\ref{app:models} two key pieces of information for each dynamical modular curve of genus $1$ or $2$: First, we provide an explicit model for each such curve. Second, each point on $X_1(G)$ carries the information of a map $f_c$ together with a collection of preperiodic points; for the models we provide, we include the rational map $X_1(G) \longto \bbP^1$ that maps a point to the corresponding parameter $c$.

\subsection{Curves of genus $1$}\label{sec:gen1}

Each of the dynamical modular curves of genus $1$ has rational points and is therefore isomorphic to an elliptic curve over $\bbQ$. All of these curves have small conductor, so we may refer to Cremona's tables \cite{cremona:1997} to determine the Mordell-Weil groups of these curves over $\bbQ$; in each case, one finds that the rank is $0$, hence $X_1(G)(\bbQ) = X_1(G)(\bbQ)_\tors$. We give in Table~\ref{tab:genus_one_labels} the Cremona labels (found in \cite{poonen:1998}) and rational Mordell-Weil groups for each of the genus $1$ dynamical modular curves.

\begin{table}
\centering
\renewcommand{\arraystretch}{1.5}
\caption{Dynamical modular curves of genus $1$}
\label{tab:genus_one_labels}
\begin{tabular}{|c|c|c|}
\hline
$G$ & Cremona label for $X_1(G)$ & $X_1(G)(\bbQ)$\\ \hline
8(1,1)a & 24A4 & $\bbZ/4\bbZ$ \\  \hline
8(1,1)b & 11A3 & $\bbZ/5\bbZ$ \\ \hline
8(2)a & 40A3 & $\bbZ/4\bbZ$ \\ \hline
8(2)b & 11A3 & $\bbZ/5\bbZ$ \\ \hline
10(2,1,1)a & 17A4 & $\bbZ/4\bbZ$ \\ \hline
10(2,1,1)b & 15A8 & $\bbZ/4\bbZ$ \\ \hline
\end{tabular}
\end{table}

We now determine, for each of the curves $X$ listed in Table~\ref{tab:genus_one_labels}, the torsion subgroup $X(K)_{\tors}$ over all quadratic fields $K$. For the following theorem, we list the genus $1$ dynamical modular curves according to their Cremona labels.

\begin{thm}\label{thm:gen_one_torsion} Let $d$ be a squarefree integer, and let $K = \bbQ(\sqrt{d})$.
	\begin{enumerate}
		\item[] {\rm(11A3)} If $G = {\rm8(1,1)b}$ or $G = {\rm8(2)b}$, then 
		\[	X_1(G)(K)_{\tors} \cong \bbZ/5\bbZ.
		\]
		\item[] {\rm(15A8)} If $G = {\rm10(2,1,1)b}$, then 
		\[	X_1(G)(K)_{\tors} \cong
			\begin{cases}
				\bbZ/2\bbZ \oplus \bbZ/4\bbZ, &\mbox{ if } d = -15;\\
				\bbZ/8\bbZ, &\mbox{ if } d \in \{-3,5\};\\
				\bbZ/4\bbZ, &\mbox{ otherwise}.
			\end{cases}
		\]
		\item[] {\rm(17A4)} If $G = {\rm10(2,1,1)a}$, then 
		\[	X_1(G)(K)_{\tors} \cong
			\begin{cases}
				\bbZ/2\bbZ \oplus \bbZ/4\bbZ, &\mbox{ if } d = 17;\\
				\bbZ/4\bbZ, &\mbox{ otherwise}.
			\end{cases}
		\]
		\item[] {\rm(24A4)} If $G = {\rm8(1,1)a}$, then 
		\[	X_1(G)(K)_{\tors} \cong
			\begin{cases}
				\bbZ/2\bbZ \oplus \bbZ/4\bbZ, &\mbox{ if } d = -3;\\
				\bbZ/8\bbZ, &\mbox{ if } d \in \{-1,3\};\\
				\bbZ/4\bbZ, &\mbox{ otherwise}.
			\end{cases}
		\]		
		\item[] {\rm(40A3)} If $G = {\rm8(2)a}$, then 
		\[	X_1(G)(K)_{\tors} \cong
			\begin{cases}
				\bbZ/2\bbZ \oplus \bbZ/4\bbZ, &\mbox{ if } d = 5;\\
				\bbZ/4\bbZ, &\mbox{ otherwise}.
			\end{cases}
		\]	
	\end{enumerate}
\end{thm}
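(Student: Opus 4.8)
The plan is to handle all six curves in parallel. Each $X_1(G)$ in the statement is an elliptic curve $E$ over $\bbQ$ whose Mordell--Weil torsion over $\bbQ$ is recorded in Table~\ref{tab:genus_one_labels}, and the task is to compute $E(K)_{\tors}$ for every quadratic field $K=\bbQ(\sqrt d)$. Two facts will do most of the work: first, $E(\bbQ)_{\tors}\subseteq E(K)_{\tors}$; and second, if $\sigma$ generates $\Gal(K/\bbQ)$, then the subgroup of $E(K)$ on which $\sigma$ acts by $-1$ is identified with $E^{(d)}(\bbQ)$, where $E^{(d)}$ is the quadratic twist of $E$ by $d$. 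In particular $E(K)[m]=E(\bbQ)[m]\oplus E^{(d)}(\bbQ)[m]$ for every odd integer $m$, while the $2$-primary parts combine in a manner governed by the $2$-torsion of $E$. Combining these observations with the classification of torsion subgroups of elliptic curves over quadratic fields (Kenku--Momose, Kamienny) will, for each $E$, reduce the problem to a finite and explicit list $S_E$ of discriminants, after which a direct (Magma-assisted) computation at each $d\in S_E$ produces the tables.

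I would first dispense with torsion of odd order. For $11A3$, where $E(\bbQ)_{\tors}\cong\bbZ/5\bbZ$, the group $(\bbZ/5\bbZ)^2$ cannot sit inside $E(K)$ for quadratic $K$ (it would force $\zeta_5\in K$), so $E^{(d)}(\bbQ)[5]=0$ and $E(K)[5]=\bbZ/5\bbZ$ for all $d$. For the four curves with $E(\bbQ)_{\tors}\cong\bbZ/4\bbZ$, a new point of odd prime order $p$ would force $\bbZ/4\bbZ\oplus\bbZ/p\bbZ\subseteq E(K)_{\tors}$, which is excluded over quadratic fields for $p\ge 5$. The only odd-order scenario not yet ruled out is a new $3$-torsion point --- which would give $\bbZ/12\bbZ$ (or even $\bbZ/2\bbZ\oplus\bbZ/12\bbZ$) for the $\bbZ/4\bbZ$-curves and $\bbZ/15\bbZ$ for $11A3$ --- and $E$ acquires such a point over a quadratic field if and only if $j(E)$ is a value of the classical $j$-invariant on the genus-zero modular curve parametrizing elliptic curves with a rational point of order $3$. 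This is a finite check for the five curves, which (consistent with the statement) yields no new torsion and at most finitely many more candidate values of $d$.

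The $2$-primary part is where the stated variation occurs. For $11A3$ we have $E(\bbQ)[2]=0$, so the $2$-division polynomial is an irreducible cubic; a cubic irreducible over $\bbQ$ has no root in a quadratic field, so $E(K)[2]=0$ for all $d$, and together with the previous paragraph $E(K)_{\tors}\cong\bbZ/5\bbZ$ for every quadratic $K$. For each of the four curves with $E(\bbQ)_{\tors}\cong\bbZ/4\bbZ$, the quadratic classification limits the $2$-primary part of $E(K)_{\tors}$ to one of $\bbZ/4\bbZ$, $\bbZ/8\bbZ$, $\bbZ/16\bbZ$, $\bbZ/2\bbZ\oplus\bbZ/4\bbZ$, $\bbZ/2\bbZ\oplus\bbZ/8\bbZ$, $\bbZ/4\bbZ\oplus\bbZ/4\bbZ$, and each growth scenario pins down $d$ explicitly: the full $2$-torsion $(\bbZ/2\bbZ)^2$ enters only for the single $d=d_0(E)$ cut out by the quadratic factor of the $2$-division polynomial of $E$; a point of exact order $8$ or $16$ requires $d$ to lie among the finitely many discriminants readable from the factorizations of the relevant division polynomials of $E$ (together with the square conditions on the corresponding $y$-coordinates); and $(\bbZ/4\bbZ)^2\subseteq E(K)$ forces $\bbQ(i)=\bbQ(\zeta_4)\subseteq K$, i.e. $d=-1$. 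Assembling these discriminants, together with the ones flagged in the $3$-torsion step, produces for each $E$ a finite explicit set $S_E$ such that $E(\bbQ(\sqrt d))_{\tors}=E(\bbQ)_{\tors}$ for all $d\notin S_E$.

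It then remains, for each of the four $\bbZ/4\bbZ$-curves and each $d\in S_E$, to compute $E(\bbQ(\sqrt d))_{\tors}$; assembling the outcomes gives the five displayed cases of the theorem. The main obstacle I anticipate lies entirely in the third step: certifying rigorously that $S_E$ captures \emph{every} way the torsion can grow --- i.e. correctly marrying the $\sigma$-eigenspace decomposition, the list of torsion subgroups over quadratic fields, and the division-polynomial analysis --- so that the concluding finite verification is genuinely exhaustive. Everything after that is routine.
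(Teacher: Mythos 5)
Your proposal is correct in outline, but it is not the route the paper takes, so a comparison is in order. The paper splits the five curves into two groups: for 11A3, 15A8, and 24A4 (which are the classical modular curves $\Xell_1(11)$, $\Xell_1(15)$, $\Xell_1(2,12)$) it simply cites Rabarison's computation, whose proof rests on the Kenku--Momose/Kamienny classification; for 17A4 and 40A3 it gives a short direct argument: reduce modulo two well-chosen primes of good reduction and use the injection $E(K)_{\tors}\hookrightarrow E(\bbF_{p^2})$ to force $E(K)_{\tors}\hookrightarrow \bbZ/2\bbZ\oplus\bbZ/4\bbZ$ for \emph{every} quadratic $K$ at once, after which only the splitting field of the $2$-division polynomial needs to be found. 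Your argument instead treats all five curves uniformly via the eigenspace/quadratic-twist decomposition, the Kenku--Momose/Kamienny list, and division polynomials. What the paper's reduction-mod-$p$ trick buys is brevity and independence from the deep classification theorem (note it cannot work for 15A8 and 24A4, where $\bbZ/8\bbZ$ genuinely occurs and so no congruence bound can exclude it --- hence the citation of Rabarison there); what your method buys is uniformity and the ability to locate the $\bbZ/8\bbZ$ discriminants directly from the order-$8$ division polynomial rather than by citation. Two small execution points: your $3$-torsion test is cleaner via the quartic $\psi_3$ of $E$ itself (a rational root $x_0$ forces $d$ to be the squarefree part of $f(x_0)$) than via lifting $j(E)$ along $\Xell_1(3)\to\bbP^1_j$, and your appeal to Kenku--Momose to terminate the $2$-primary analysis is essential (without it, nothing a priori stops growth at order $16$), so your proof ultimately leans on the same deep input as the part of the paper's proof that is delegated to Rabarison.
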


\begin{proof}
The curves with Cremona labels 11A3, 15A8, and 24A4 are birational to the classical modular curves $\Xell_1(11)$, $\Xell_1(15)$, and $\Xell_1(2,12)$, respectively. Theorem~\ref{thm:gen_one_torsion} was proven for these curves by Rabarison \cite{rabarison:2010}, whose proof relies on the extension of Mazur's theorem to quadratic fields due to Kenku-Momose \cite{kenku/momose:1988} and Kamienny \cite{kamienny:1992}. Our method of proof for the remaining curves---17A4 and 40A3---is more elementary and, though we do not do so here, may be used to give an alternative proof for the curves 11A3, 15A8, and 24A4.

Let $E_{17}$ and $E_{40}$ denote the curves 17A4 and 40A3, respectively, given in \cite{cremona:1997} by the following models:
	\begin{align*}
	E_{17}&:  y^2 + xy + y = x^3 - x^2 - x;\\
	E_{40}&:  y^2 = (x-1)(x^2 + x - 1).
	\end{align*}

The primes 3 and 5 (resp., 3 and 17) are primes of good reduction for $E_{17}$ (resp., $E_{40}$). If $\frakp$ is a prime in $\OK$ lying above the rational prime $p$, then the residue field $k_{\frakp}$ embeds into $\bbF_{p^2}$; by the calculations in \cite[{\tt main.txt}]{doyle:cyclotomic}, we find the following:
\begin{alignat*}{4}
	E_{17}(\bbF_{3^2}) &\cong \bbZ/4\bbZ \oplus \bbZ/4\bbZ & \hspace{10mm} E_{40}(\bbF_{3^2}) &\cong \bbZ/4\bbZ \oplus \bbZ/4\bbZ\\
	E_{17}(\bbF_{5^2}) &\cong \bbZ/2\bbZ \oplus \bbZ/16\bbZ & E_{40}(\bbF_{17^2}) &\cong \bbZ/2\bbZ \oplus \bbZ/160\bbZ.
\end{alignat*}
It follows that both $E_{17}(K)_{\tors}$ and $E_{40}(K)_{\tors}$ embed into $\bbZ/2\bbZ \oplus \bbZ/4\bbZ$. Since both $E = E_{17}$ and $E = E_{40}$ have $E(\bbQ)_{\tors} \cong \bbZ/4\bbZ$, it follows that if $E$ gains additional torsion points after base change to $K$, then $E$ necessarily gains a $K$-rational 2-torsion point, hence the full torsion subgroup $E(K)_{\tors}$ is precisely $\bbZ/2\bbZ \oplus \bbZ/4\bbZ$. It remains, then, to find the fields of definition for $E_{17}[2]$ and $E_{40}[2]$.

One can easily verify that $E_{17}[2]$ consists of $\infty$, $(1,-1)$, and the two points $(x,-1/2(x+1))$ with $4x^2 + x - 1 = 0$. Therefore $E_{17}$ attains full 2-torsion over $K = \bbQ(\sqrt{17})$. The 2-torsion on $E_{40}$ is perhaps more apparent: $E_{40}[2]$ consists of the points $\infty$, $(1,0)$, and $(x,0)$ with $x^2 + x - 1 = 0$. Hence $E_{40}$ attains full 2-torsion over $K = \bbQ(\sqrt{5})$.
\end{proof} 

To say that there exists a parameter $c \in K$ such that $G(f_c,K)$ contains a subgraph isomorphic to $G$ is equivalent to saying that $U_1(G)$ has a $K$-rational point. With this in mind, we now apply Theorem~\ref{thm:gen_one_torsion} to show the following:

\begin{prop}\label{prop:gen1}
Let $G$ be an admissible graph for which $X_1(G)$ has genus $1$, and let $K$ be a quadratic field.
	\begin{enumerate}
	\item If $\rk X_1(G)(K) = 0$, then $G$ does not occur as a subgraph of $G(f_c,K)$ for any $c \in K$, unless $G \cong {\rm10(2,1,1)b}$, $K = \bbQ(\sqrt{-15})$, and $c = 3/16$.
	\item If $\rk X_1(G)(K) \ge 1$, then $G$ occurs as a subgraph of $G(f_c,K)$ for infinitely many $c \in K$.
	\end{enumerate}
\end{prop}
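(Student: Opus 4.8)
The proposition compares, for each genus-one dynamical modular curve $X_1(G)$, the set of $K$-rational points (equivalently, the torsion plus any positive-rank contribution) with the open locus $U_1(G)(K)$. The strategy is to combine Theorem~\ref{thm:gen_one_torsion}, which pins down $X_1(G)(K)_{\tors}$ for every quadratic $K$, with an explicit understanding of the ``cusps and degenerate points'' — the finitely many points of $X_1(G)$ that do not lie on $U_1(G)$, i.e.\ that correspond either to points at infinity or to parameters $c$ for which the marked points fail to have the prescribed exact portrait, or for which $G(f_c,K)$ fails to be admissible (the footnote-excluded points where $0$ lands in the orbit). Part (B) is the easy half: if $\rk X_1(G)(K)\ge 1$ then $X_1(G)(K)$ is infinite, while $X_1(G)\setminus U_1(G)$ is a fixed finite set, so all but finitely many $K$-points lie on $U_1(G)(K)$, and each gives a parameter $c$ with $G\subseteq G(f_c,K)$; distinct points of $U_1(G)$ over distinct $c$ finish it. (One should note that the fibers of $U_1(G)\to\bbP^1_c$ are finite, so infinitely many $K$-points on $U_1(G)$ force infinitely many $c$.)

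For part (A), the work is to go curve by curve through the six graphs. Using the models and the maps $X_1(G)\to\bbP^1_c$ recorded in Appendix~\ref{app:models}, I would first compute, over $\bbQ$, the finite set of points of $X_1(G)(\Kbar)$ lying outside $U_1(G)$, together with their images $c$; this is a finite Magma calculation. Then, for a quadratic field $K$, Theorem~\ref{thm:gen_one_torsion} tells us exactly which torsion points $X_1(G)(K)$ acquires beyond $X_1(G)(\bbQ)_{\tors}$, and one checks whether any of the \emph{new} $K$-rational points actually lies on $U_1(G)$ rather than in the excluded locus. For the five curves $11A3$, $17A4$, $24A4$, $40A3$, and for $15A8$ except when $d=-15$, the claim is that every $K$-rational point (over any quadratic $K$, when the rank is $0$) is one of these degenerate points, so $U_1(G)(K)=\emptyset$ and $G$ is not realized. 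The single exceptional case is $10(2,1,1)\mathrm{b}$ over $\bbQ(\sqrt{-15})$: here Theorem~\ref{thm:gen_one_torsion} gives $X_1(G)(K)_{\tors}\cong\bbZ/2\bbZ\oplus\bbZ/4\bbZ$, strictly larger than over $\bbQ$, and one verifies that the extra point is a genuine point of $U_1(G)$ mapping to $c=3/16$; a direct check that $f_{3/16}$ over $\bbQ(\sqrt{-15})$ has the portrait-$(2,1)$ and fixed-point structure of $10(2,1,1)\mathrm{b}$ confirms this.

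The main obstacle is the bookkeeping in part (A): correctly identifying, for each model, which of the finitely many non-$U_1(G)$ points on $X_1(G)$ are defined over which quadratic fields, and matching this against the list of torsion jumps in Theorem~\ref{thm:gen_one_torsion} so that no ``new'' point is accidentally a legitimate point of $U_1(G)$ other than in the claimed $\bbQ(\sqrt{-15})$ case. Concretely, for each curve one needs: (i) the $\bbQ$-rational points and their $c$-values (all degenerate, by Poonen's analysis in \cite{poonen:1998}); (ii) for the finitely many $d$ where torsion grows, the coordinates of the new $2$-torsion (or $4$-torsion) point and its image $c$; and (iii) a check that this $c$ does \emph{not} yield the graph $G$ as an exact configuration — i.e.\ the new point lies on $X_1(G)\setminus U_1(G)$ — except in the one flagged instance. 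This is routine given the explicit models but must be done carefully for all six graphs; once it is done, Proposition~\ref{prop:gen1} follows immediately by combining (A) and (B).
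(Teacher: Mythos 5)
Your proposal is correct and follows essentially the same route as the paper: part (B) via openness of $U_1(G)$ in $X_1(G)$ together with infinitude of $X_1(G)(K)$, and part (A) by combining Theorem~\ref{thm:gen_one_torsion} with the emptiness of $U_1(G)(\bbQ)$ from \cite{poonen:1998}, then checking case by case that the finitely many new torsion points over the fields where torsion jumps are degenerate (the paper does this by computing $G(f_c,K)$ for the corresponding $c$-values, which is equivalent to your check that the new points miss $U_1(G)$), with the sole exception at $c=3/16$ over $\bbQ(\sqrt{-15})$.
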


\begin{proof}
We begin by proving (B), so assume that $X_1(G)(K)$ has positive Mordell-Weil rank. Then the curve $X_1(G)$ necessarily contains infinitely many $K$-rational points. Since $U_1(G)$ is open in $X_1(G)$, this implies that the set $U_1(G)(K)$ is infinite, hence the graph $G$ occurs as a subgraph of $G(f_c,K)$ for infinitely many parameters $c \in K$.

We now prove (A), so we suppose that $\rk X_1(G)(K) = 0$. If also $X_1(G)(K)_{\tors} = X_1(G)(\bbQ)_{\tors}$, then necessarily $X_1(G)(K) = X_1(G)(\bbQ)$. For each genus $1$ dynamical modular curve, the set $U_1(G)(\bbQ)$ is empty by \cite{poonen:1998}, so in this case $U_1(G)(K)$ is empty as well. Therefore the graph $G$ never occurs as a subgraph of $G(f_c,K)$ for any $c \in K$.

It remains to consider the case that $X_1(G)(K)_{\tors} \supsetneq X_1(G)(\bbQ)_{\tors}$.
We consider the graphs in the same order as in Theorem~\ref{thm:gen_one_torsion}, where all of the corresponding torsion subgroups are described.

If $G = {\rm8(1,1)b}$ or $G = {\rm8(2)b}$, then $X_1(G)(K)_{\tors} = X_1(G)(\bbQ)_{\tors}$ for all quadratic fields $K$, so the proposition holds for these graphs.

Now let $G = {\rm10(2,1,1)b}$. The only quadratic fields over which $X_1(G)$ gains torsion points are $K = \bbQ(\sqrt{d})$ with $d \in \{-15,-3,5\}$, and $X_1(G)$ has rank $0$ over all three of these fields. The four additional points on $X_1(G)(\bbQ(\sqrt{-15}))$ correspond to $c = 3/16$, in which case $G(f_c,\bbQ(\sqrt{-15})) \cong {\rm10(2,1,1)b}$, giving us the unique exception in the statement of the proposition. The four additional points on $X_1(G)(\bbQ(\omega))$ correspond to $c = 0$ and $c = -3/4$, for which $G(f_c,\bbQ(\omega))$ is isomorphic to 7(2,1,1)a and 6(1,1), respectively. Of the four additional points on $X_1(G)(\bbQ(\sqrt{5}))$, two are points at infinity, and the other two correspond to $c = -2$, in which case $G(f_c,\bbQ(\sqrt{5})) \cong {\rm9(2,1,1)}$.

In the case $G = {\rm10(2,1,1)a}$, the only quadratic field over which $X_1(G)$ gains torsion points is $\bbQ(\sqrt{17})$. However, $X_1(G)$ has rank $1$ over $\bbQ(\sqrt{17})$.

We now consider $G = {\rm8(1,1)a}$. In this case, $X_1(G)(K)_{\tors}$ is strictly larger than $X_1(G)(\bbQ)_{\tors}$ only for $K = \bbQ(\sqrt{d})$ with $d \in \{-3,-1,3\}$, and the rank of $X_1(G)(K)$ remains $0$ for each such $K$. For $d = -3$, the four additional points correspond to $c = 1/4$, where we have $G(f_c,\bbQ(\omega)) \cong {\rm4(1)}$. For $d = -1$, two of the additional points are points at infinity, while the other two correspond to $c = 0$, for which we have $G(f_c,\bbQ(i)) \cong {\rm5(1,1)b}$. When $d = 3$, the extra points on $X_1(G)$ correspond to $c = -2$, in which case $G(f_c,\bbQ(\sqrt{3})) \cong {\rm7(1,1)b}$.

Finally, let $G = {\rm8(2)a}$. The elliptic curve $X_1(G)$ only gains additional torsion over $K = \bbQ(\sqrt{5})$, and the rank is $0$ over $K$. The new points correspond to $c = -3/4$, where we actually have $G(f_c,\bbQ(\sqrt{5})) \cong {\rm6(1,1)}$.
\end{proof}

\subsection[Curves of genus $2$]{Curves of genus $2$}\label{sec:gen2}

In this section, we consider the Jacobians of each of the four genus $2$ dynamical modular curves listed in Proposition~\ref{prop:all_dyn_mod_curves}. As we did for the genus $1$ curves in the previous section, we explicitly determine the torsion subgroups of these four Jacobians over all quadratic extensions $K/\bbQ$.

Three of the four genus $2$ dynamical modular curves are also classical modular curves:
	\begin{equation}\label{eq:X1(1,3)}
		\begin{split}
			X_1(4) &\cong \Xell_1(16) : \hspace{5mm} y^2 = f_{16}(x) := -x(x^2 + 1)(x^2 - 2x - 1)\\
			X_1(1,3) &\cong \Xell_1(18) : \hspace{5mm} y^2 = f_{18}(x) := x^6 + 2x^5 + 5x^4 + 10x^3 + 10x^2 + 4x + 1\\
			X_1(2,3) &\cong \Xell_1(13) : \hspace{5mm} y^2 = f_{13}(x) := x^6 + 2x^5 + x^4 + 2x^3 + 6x^2 + 4x + 1.
		\end{split}
	\end{equation}
These curves correspond to the graphs 8(4), 10(3,1,1), and 10(3,2), respectively. For $G = {\rm8(3)}$, which is generated by a point of portrait $(2,3)$, it was shown in \cite{poonen:1998} that $X_1(G) = X_1((2,3))$ is given by the equation
	\[ y^2 = x^6 - 2x^4 + 2x^3 + 5x^2 + 2x + 1. \]
Each of these four curves $X_1(\cdot)$ has rational points, but none of them lie on $U_1(\cdot)$---with the exception of $(1, \pm 3) \in U_1((2,3))(\bbQ)$, corresponding to $c = -29/16$, for which $G(f_{-29/16}, \bbQ) \cong {\rm8(3)}$.
\begin{rem}
Since the notation is so similar, we pause to emphasize that $X_1(2,3)$ parametrizes maps $f_c$ together with marked points of period $2$ and $3$, respectively, while $X_1((2,3))$ parametrizes maps $f_c$ together with a single marked point of portrait $(2,3)$.
\end{rem}
The rational Mordell-Weil groups of the classical modular Jacobians $\Jell_1(N)$ with $N \in \{13,16,18\}$ are well known, and the same was computed for $J_1((2,3))$ in \cite{poonen:1998}:
	\begin{align*}
		J_1(4)(\bbQ) = \Jell_1(16)(\bbQ) &\cong \bbZ/2\bbZ \oplus \bbZ/10\bbZ\\
		J_1(1,3)(\bbQ) = \Jell_1(18)(\bbQ) &\cong \bbZ/21\bbZ\\
		J_1(2,3)(\bbQ) = \Jell_1(13)(\bbQ) &\cong \bbZ/19\bbZ\\
		J_1((2,3))(\bbQ) &\cong \bbZ,
	\end{align*}
We now determine over which quadratic fields $K$ these Jacobians gain new torsion points.

\begin{thm}\label{thm:gen_two_torsion}
Let $d$ be a squarefree integer, and let $K = \bbQ(\sqrt{d})$.
	\begin{enumerate}
	\item
		\[ J_1(4)(K)_{\tors} \cong
		\begin{cases}
			(\bbZ/2\bbZ)^2 \oplus \bbZ/10\bbZ, &\mbox{ if } d \in \{-1,2\};\\
			\bbZ/2\bbZ \oplus \bbZ/10\bbZ, &\mbox{ otherwise}.
		\end{cases}
		\]
	\item 
		\[ J_1(1,3)(K)_{\tors} \cong
		\begin{cases}
			\bbZ/3\bbZ \oplus \bbZ/21\bbZ, &\mbox{ if } d = -3;\\
			\bbZ/21\bbZ, &\mbox{ otherwise}.
		\end{cases}
		\]
	\item
		\[ J_1(2,3)(K)_{\tors} \cong \bbZ/19\bbZ. 
		\]
	\item
		\[
			J_1((2,3))(K)_\tors \cong 0.
		\]
	\end{enumerate}
\end{thm}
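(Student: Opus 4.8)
The plan is to establish all four parts by a single three‑step strategy: (i) bound $J_1(G)(K)_{\tors}$ from above by reducing modulo primes of good reduction; (ii) determine the $2$‑primary part exactly, using Lemma~\ref{lem:2torsion} together with the explicit Weierstrass points of the given model; and (iii) handle any remaining small‑prime torsion --- only the prime $3$ in part (B) --- directly. Throughout, fix the model $y^2 = f(x)$ for $X = X_1(G)$ recorded in \eqref{eq:X1(1,3)} (or the model displayed for $\rm 8(3)$), let $J$ be its Jacobian, and recall that $J(\bbQ)_{\tors}$ is as listed above the theorem.

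\textbf{Step (i): reduction.} For each curve I would choose two distinct primes $p_1,p_2$ of good reduction for $X$. If $\frakp \mid p_i$ is a prime of $\OK$, then the residue field embeds into $\bbF_{p_i^2}$, and the reduction map $J(K)_{\tors} \to J(\bbF_{p_i^2})$ is injective on the prime‑to‑$p_i$ part; in particular, since $p_i$ is odd it is injective on the $2$‑primary part. Computing $\#J(\bbF_{p_i^2})$ in Magma and combining the two primes bounds $J(K)_{\tors}$ inside an explicit finite group $A \supseteq J(\bbQ)_{\tors}$. In cases (C) and (D), where $J(\bbQ)_{\tors}$ is $\bbZ/19\bbZ$ and $0$ respectively (in particular of odd order), the primes can be chosen so that the odd part of $J(K)_{\tors}$ is forced to equal that of $J(\bbQ)_{\tors}$; the only remaining possibility for new torsion is then $2$‑primary, which is ruled out in Step (ii). For (A) the same computation shows that any new torsion is $2$‑primary, and for (B) that it lies in $J[3]$.

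\textbf{Step (ii): the $2$‑part.} By Lemma~\ref{lem:2torsion}, the nonzero $2$‑torsion of $J$ consists of the classes $\{W_i,W_j\}$ with $i \ne j$, where $W_1,\dots,W_6$ are the Weierstrass points of $X$ (the roots of $f$, together with the point at infinity when $\deg f = 5$). A class $\{W_i,W_j\}$ is defined over a quadratic field $K$ exactly when the unordered pair $\{W_i,W_j\}$ is $\Gal(\Qbar/K)$‑stable, which for $[K:\bbQ]=2$ means either $W_i,W_j \in X(K)$ or $W_i,W_j$ form a pair of $\Gal(\Qbar/\bbQ)$‑conjugates defined over $K$. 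I would therefore factor $f$ (and its quadratic‑field factorizations) in each case. For $X_1(4)$, $f_{16}$ has roots $0$, $\pm i$, $1\pm\sqrt2$, and one checks that $J$ acquires exactly one new $2$‑torsion class --- raising the $2$‑rank from $2$ to $3$ --- over $\bbQ(\sqrt{-1})$ and over $\bbQ(\sqrt 2)$ and over no other quadratic field; together with Step (i) this yields part (A). For $X_1(1,3)$, $X_1(2,3)$, and $X_1((2,3))$ one checks that $f$ has no quadratic factor over $\bbQ$ and that no quadratic field contains the field of definition of an unordered pair of its roots, so no new $2$‑torsion is ever acquired; combined with Step (i) this completes parts (C) and (D) and reduces part (B) to the $3$‑torsion.

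\textbf{Step (iii): the $3$‑part in (B), and the main obstacle.} It remains to show that $J_1(1,3)$ gains $3$‑torsion over a quadratic field only for $K = \bbQ(\sqrt{-3})$, where the $3$‑part becomes exactly $(\bbZ/3\bbZ)^2$. Since $J(\bbQ)_{\tors} = \bbZ/21\bbZ$ already contains a rational $3$‑torsion point, a jump in the $3$‑rank would make $J[3]$ fully $K$‑rational, forcing $\mu_3 \subseteq K$ by the Weil pairing, i.e.\ $K = \bbQ(\sqrt{-3})$; and reduction at a good prime $p \equiv 1 \pmod 3$ with $9 \nmid \#J(\bbF_{p^2})$ simultaneously rules out $9$‑torsion. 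The one genuinely non‑formal point --- which I expect to be the main obstacle, since Lemma~\ref{lem:2torsion} has no analogue for odd torsion --- is to \emph{exhibit} the extra $3$‑torsion class over $\bbQ(\sqrt{-3})$. I would produce it either by a direct search for a class of order $3$ of the form $\{P,Q\}$ with $P,Q$ defined over $\bbQ(\sqrt{-3})$, using the explicit group law on $J$ as in \cite{cassels/flynn:1996}, or by appealing to known results on $\Xell_1(18) \cong X_1(1,3)$ and its cuspidal subgroup. With that class in hand, Steps (i)--(ii) close out part (B).
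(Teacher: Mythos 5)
Your Steps (i) and (ii) follow the paper's treatment of parts (A), (C), and (D) quite closely: reduction modulo two odd primes of good reduction bounds $J(K)_{\tors}$ from above (for (C) and (D) the paper's choice $p=3,5$ already forces the torsion into $\bbZ/19\bbZ$ and $0$ respectively, so your separate $2$-primary analysis there is harmless but not needed), and for (A) the only possible new torsion is $2$-torsion, which is read off from the Weierstrass points of $y^2=f_{16}(x)$ via Lemma~\ref{lem:2torsion}, exactly as in the paper.

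The genuine gap is in Step (iii), which is where the real content of part (B) lives. You claim that a jump in the $3$-rank over $K$ ``would make $J[3]$ fully $K$-rational, forcing $\mu_3\subseteq K$.'' But $J_1(1,3)$ is an abelian \emph{surface}, so $J[3]\cong(\bbZ/3\bbZ)^4$: acquiring one new independent $K$-rational point of order $3$ only yields a rank-$2$ Galois-stable subspace $V\subseteq J[3]$, not all of $J[3]$. The Weil pairing is nondegenerate and alternating on the full $J[3]$, but its restriction to a $2$-dimensional $V$ may vanish identically ($V$ may be Lagrangian), in which case it imposes no condition on cube roots of unity in $K$; indeed the subgroup the paper finds over $\bbQ(\omega)$ is exactly $(\bbZ/3\bbZ)^2$ and is consistent with being isotropic. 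So your argument does not exclude a quadratic field $K\ne\bbQ(\sqrt{-3})$ from acquiring $3$-torsion, and it is precisely this non-existence statement---not the exhibition of the extra class over $\bbQ(\sqrt{-3})$, which is comparatively easy---that is the hard direction. The paper supplies it via Lemma~\ref{lem:3torsion}: after using Lemma~\ref{lem:3infty} to show that an order-$3$ class $\{P,Q\}$ cannot involve points at infinity or Weierstrass points, one writes down the function $y-(Ax^3+Bx^2+Cx+D)$ with divisor $3P+3Q-3\infty^+-3\infty^-$, equates coefficients to obtain a zero-dimensional scheme with $80$ geometric points, and checks that only eight of them have $t=x_1+x_2$ and $n=x_1x_2$ of degree at most $2$ over $\bbQ$, all defined over $\bbQ(\omega)$. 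Some computation of this kind (or a citation covering \emph{all} quadratic fields at once, not just $\bbQ(\omega)$) is required to close part (B).
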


\begin{rem}
The torsion subgroups of the Jacobians of classical modular curves of genus 2 have been computed over certain quadratic fields---including $\bbQ(i)$ and $\bbQ(\omega)$---in \cite{kamienny/najman:2012,najman:2010,najman:2011}. Here, we compute the torsion subgroups over all quadratic fields simultaneously for dynamical modular curves of genus 2, just as we did in the genus 1 case in \textsection \ref{sec:gen1}.
\end{rem}

The most difficult case of Theorem~\ref{thm:gen_two_torsion} is (B). In order to prove (B), we will require two lemmas concerning the curve $X_1(1,3)$. Recall that for a divisor $D$ on a curve $X$, the {\it Riemann-Roch space of $D$} is the space
	\[
		\calL(D) := \{f \in \bbC(X) : (f) + D \text{ is effective}\},
	\]
and the {\it complete linear system} $|D|$ is the set of all effective divisors linearly equivalent to $D$; that is, the set of effective divisors $E$ for which $[E - D] = [0] = \calO$.
\begin{lem}\label{lem:3infty}
Let $X = X_1(1,3)$, given by the model $y^2 = f_{18}(x)$ from \eqref{eq:X1(1,3)}. Then $[3P - 3\infty^+] = \calO$ if and only if $P = \infty^+$.
\end{lem}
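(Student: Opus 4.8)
The plan is to work on the hyperelliptic curve $X = X_1(1,3)$ with its genus $2$ model $y^2 = f_{18}(x)$, where $\deg f_{18} = 6$ with leading coefficient $1$, so that there are two rational points $\infty^\pm$ at infinity with $\iota\infty^+ = \infty^-$. One direction is trivial: if $P = \infty^+$ then $3P - 3\infty^+ = 0$, so its class is $\calO$. For the converse, suppose $[3P - 3\infty^+] = \calO$, i.e. $3P \sim 3\infty^+$. The idea is to rule this out for every $P \neq \infty^+$ by a Riemann–Roch and linear-systems argument analyzing the rational function $x$ and the function with divisor $3\infty^+ - 3\infty^-$ (or rather, by pinning down which divisors can be linearly equivalent to $3\infty^+$ on a genus $2$ curve).

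First I would recall that on a genus $2$ curve, for an effective divisor $D$ of degree $3$ we have $\ell(D) = 2$ (by Riemann–Roch, since $\deg D = 3 > 2g - 2 = 2$, so $\ell(D) = \deg D + 1 - g = 2$). Thus $|3\infty^+|$ is a $g^1_3$; since $X$ has a unique $g^1_2$ (the hyperelliptic pencil $|\calK| = |\infty^+ + \infty^-|$, with $\calK$ the canonical class), every $g^1_3$ is obtained from the $g^1_2$ by adding a fixed base point. Concretely, $|3\infty^+|$ contains $\infty^+ + |\infty^+ + \infty^-|$; but $|3\infty^+|$ has dimension $1$ as a linear system and already contains $\infty^+ + (\text{any element of }|\calK|)$, which is itself a $1$-dimensional family, so $|3\infty^+| = \{\,\infty^+ + Q + \iota Q : Q \in X\,\}$, i.e. $\infty^+$ is a base point of $|3\infty^+|$. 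Therefore the only way to have $3P \in |3\infty^+|$ with $P$ a single point counted with multiplicity $3$ is to have $\infty^+$ appear in $3P$, forcing $P = \infty^+$. The one thing to check carefully here is that $\infty^+$ is genuinely a base point, equivalently that $\ell(3\infty^+) = \ell(2\infty^+)$ — but $\ell(2\infty^+) = 1$ since $2\infty^+$ is not linearly equivalent to $\calK = \infty^+ + \infty^-$ (as $\infty^+ \neq \infty^- = \iota\infty^+$, the only effective divisor in $|\calK|$ through $\infty^+$ with multiplicity would require $\infty^+$ to be Weierstrass, which it is not since $\deg f_{18}$ is even and $\infty^+$ is not a ramification point of $x$), and $\ell(2\infty^+) \leq \ell(\calK) = g = 2$ with equality iff $2\infty^+ \sim \calK$; so $\ell(2\infty^+) = 1$, while $\ell(3\infty^+) = 2$, giving exactly one base point, which must be $\infty^+$ since adding $\infty^+$ to the base-point-free $|\calK| = |2\infty^+$-via-$\infty^+ + \infty^-|$ accounts for the full system.

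The main obstacle is the bookkeeping in the previous paragraph: one must be sure that $\infty^+$ is not a Weierstrass point (so that $|\calK|$ does not contain $2\infty^+$) and that the $g^1_3$ given by $|3\infty^+|$ is not itself base-point-free (a base-point-free $g^1_3$ on a genus $2$ curve would give a degree $3$ map to $\bbP^1$, which is impossible when a unique $g^1_2$ exists and $g^1_3 = g^1_2 + \text{pt}$ is forced — indeed any $g^1_3$ on a hyperelliptic curve has a base point). An alternative, entirely computational route, which I would mention as a fallback, is simply to compute in the Jacobian: the class $[3P - 3\infty^+]$ equals $3\{P, \iota\infty^+\}$-type combinations in the Mumford/Cantor representation, and one checks directly in Magma that the only $P \in X$ with $3P - 3\infty^+$ principal is $P = \infty^+$, e.g. by finding all zeros of the appropriate resultant; but the linear-systems argument is cleaner and avoids coordinates. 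In either case the conclusion is $P = \infty^+$, completing the proof.
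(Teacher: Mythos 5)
There is a genuine gap in the central step. You claim that $|3\infty^+|$ has $\infty^+$ as a base point, deduced from the assertion that ``any $g^1_3$ on a hyperelliptic curve has a base point.'' That assertion holds for \emph{special} linear systems (by Clifford's theorem with equality), but $3\infty^+$ has degree $3 > 2g - 2 = 2$, so $\ell(K - 3\infty^+) = 0$ and the divisor is non-special; Clifford gives nothing here. Concretely, your claim is falsified by the paper's own proof, which exhibits a function $h = y + x^3 + x^2 + 2x \in \calL(3\infty^+)$ whose only pole is a triple pole at $\infty^+$ and which has no zero at $\infty^+$; thus the effective divisor $(h)_0 \in |3\infty^+|$ avoids $\infty^+$, so the pencil is base-point-free. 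The companion claim that ``$|3\infty^+|$ contains $\infty^+ + |\infty^+ + \infty^-|$'' is also false: $\infty^+ + K \sim 3\infty^+$ would force $K \sim 2\infty^+$, i.e. $\infty^+$ Weierstrass, which you correctly note it is not. So the divisors $\infty^+ + Q + \iota Q$ lie in $|\infty^+ + K|$, a \emph{different} $g^1_3$, not in $|3\infty^+|$.

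Because the pencil is base-point-free, one genuinely has to argue about which divisors in $|3\infty^+|$ can equal $3P$ for a single point $P$. That is exactly what the paper does: after showing $\calL(3\infty^+) = \langle 1, h\rangle$, it writes any candidate $g = y + x^3 + x^2 + 2x + A$, converts the triple-zero condition into a discriminant system in $A$, and checks that the only solution forces the degree to drop. (The cases $P = \infty^-$ and $P$ a Weierstrass point are handled separately: the first by noting $[\infty^- - \infty^+]$ has order $21$, the second by using that $\infty^+$ is not Weierstrass so $\calL(2\infty^+)$ is trivial.) Your ``fallback'' of a direct Jacobian computation is fine and is essentially what the paper's residual computation is doing inside the $\calL(3\infty^+)$ framework, but the clean linear-systems shortcut you propose does not exist for this divisor.
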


\begin{proof}
One direction is immediate, so we suppose $P \ne \infty^+$ and show that $[3P - 3\infty^+] \ne \calO$.

First, take $P = \infty^-$. We verify in \cite[{\tt main.txt}]{doyle:cyclotomic} that the point $[\infty^- - \infty^+] \in J_1(2,3)(\bbQ)$ is a point of order 21, which means that $[3\infty^- - 3\infty^+] = 3[\infty^- - \infty^+] \ne \calO$.

Next, we observe that if $P$ is a Weierstrass point, then
	\[ [3P - 3\infty^+] = [P + \infty^- - 2\infty^+] + [2P - \infty^+ - \infty^-] = [P + \infty^- - 2\infty^+]. \]
Since $\infty^+$ is not a Weierstrass point, there is no rational function of degree $2$ with a pole only at $\infty^+$, so $[P + \infty^- - 2\infty^+]$---and therefore $[3P - 3\infty^+]$---is nonzero.

Finally, suppose $P$ is a finite, non-Weierstrass point on $X$; write $P = (x_0,y_0)$ with $y_0 \ne 0$. Suppose for contradiction that $[3P - 3\infty^+] = \calO$, and let $g$ be a rational function on $X$ with zero divisor $3P$ and pole divisor $3\infty^+$. Then $g$ lies in $\calL(3\infty^+)$, which has dimension $2$ by the Riemann-Roch theorem. The constant functions certainly lie in $\calL(3\infty^+)$, and we claim that the function $h := y + x^3 + x^2 + 2x$ also lies in $\calL(3\infty^+)$, so that $\calL(3\infty^+) = \langle 1, h \rangle$. In other words, we claim that the only pole of $h$ is a triple pole at $\infty^+$. Certainly $h$ has no finite poles. To better understand the behavior of $h$ at infinity, we cover $X$ by the affine patches $y^2 = f_{18}(x)$ and $v^2 = u^6 f_{18}(1/u)$, with the identifications $x = 1/u$ and $y = v/u^3$ (as described in \textsection \ref{sec:hyp}). The two points $\infty^{\pm}$ on $X$ are given by $(u,v) = (0,\pm 1)$. We rewrite $h$ in terms of $u$ and $v$ to get
	\begin{equation}\label{eq:h_uv}
	h = \frac{v + 2u^2 + u + 1}{u^3}.
	\end{equation}
Certainly $h$ has a triple pole at $(u,v) = (0,1)$, since $u = 1/x$ is a uniformizer at $\infty^+$. On the other hand, multiplying each of the numerator and denominator of \eqref{eq:h_uv} by $v - (2u^2 + u + 1)$ yields
	\[ h = \frac{u^3 + 4u^2 + 6u + 6}{v - (2u^2 + u + 1)}, \]
which now visibly does not have a pole at $(u,v) = (0,-1)$. Therefore $h \in \calL(3\infty^+)$, as claimed.

It follows that the function $g$ may be written as $a + b(y + x^3 + x^2 + 2x)$ for some scalars $a$ and $b$. Since $g$ must be nonconstant, we must have $b \ne 0$; scaling by $1/b$, we may assume $g$ is of the form
	\[ g = y + x^3 + x^2 + 2x + A, \]
which we rewrite as
	\[
		g = \frac{y^2 - (x^3 + x^2 + 2x + A)^2}{y - (x^3 + x^2 + 2x + A)} = -\frac{p(x)}{y - (x^3 + x^2 + 2x + A)},
	\]
where
	\[ p(x) := 2(A-3)x^3 + 2(A-3)x^2 + 4(A-1)x + (A+1)(A-1). \]
Since $P$ is not a Weierstrass point, $x - x_0$ is a uniformizer at $P$; since $g$ vanishes to order $3$ at $P$, this means that $(x-x_0)^3$ must divide $p(x)$. Thus each of $p(x)$ and $p'(x)$ has a multiple root, so
\[
	\disc(p) = -4 (A-1) (A-3) \left(27 A^4-118 A^3+180 A^2-42 A+17\right) = 0
\]
and
\[
	\disc(p') = -16(A - 3)(5A - 3) = 0.
\]
This forces $A = 3$, which contradicts the fact that $p(x)$ must have degree $3$. Having exhausted all possibilities for $P \ne \infty^+$, we have completed the proof.
\end{proof}

\begin{lem}\label{lem:3torsion}
Let $X = X_1(1,3)$ and $J = J_1(1,3)$. The $3$-torsion subgroup $J[3]$ contains only nine points of degree at most $2$ over $\bbQ$, all of which are defined over $\bbQ(\omega)$.
\end{lem}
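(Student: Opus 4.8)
The plan is to estimate the set $S := \{\calP \in J[3] : [\bbQ(\calP):\bbQ] \le 2\}$ from both sides. A reduction argument will give $\#S \le 9$, and an explicit construction will exhibit nine elements of $S$ that are all defined over $\bbQ(\omega)$; together these prove the lemma.

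For the upper bound, fix a rational prime $p \nmid 6\disc(f_{18})$, so that the model $y^2 = f_{18}(x)$ has good reduction at $p$, and fix a prime $\frakp$ of $\QQbar$ lying above $p$. Since $p \ne 3$, reduction modulo $\frakp$ is injective on $J[3]$. If $\calP \in S$ and $K := \bbQ(\calP)$, then $[K:\bbQ] \le 2$ forces the residue field of $\OK$ at $\frakp \cap \OK$ to have degree at most $2$ over $\bbF_p$, so $\calP \bmod \frakp$ lies in $\tilde J(\bbF_{p^2})[3]$; by injectivity, distinct elements of $S$ have distinct reductions, whence $\#S \le \#\tilde J(\bbF_{p^2})[3]$. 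It thus suffices to produce one good prime $p \ne 3$ with $\#\tilde J(\bbF_{p^2})[3] = 9$. (Once the lower bound is in hand, any prime inert in $\bbQ(\omega)$ automatically has $9 \mid \#\tilde J(\bbF_{p^2})[3]$, since the nine points below reduce injectively; so what is needed is merely a single small inert prime for which the count is exactly $9$, which is a finite check in Magma.)

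For the lower bound we must exhibit nine $3$-torsion points defined over $\bbQ(\omega)$. Since $J(\bbQ) \cong \bbZ/21\bbZ$, there is a rational point $\calP_0 \in J(\bbQ)[3]$ of order $3$ --- e.g.\ $\calP_0 = 7[\infty^- - \infty^+]$, using that $[\infty^- - \infty^+]$ has order $21$ in $J(\bbQ)$ (see the proof of Lemma~\ref{lem:3infty}). The new ingredient is a second $3$-torsion point $\calP_1$ defined over $\bbQ(\omega)$ and not lying in $\langle\calP_0\rangle = J(\bbQ)[3]$. Given such $\calP_1$, the group $\langle\calP_0,\calP_1\rangle \cong (\bbZ/3\bbZ)^2$ consists of nine points, each of order dividing $3$ and each defined over $\bbQ(\omega)$, hence of degree at most $2$; comparing with the upper bound forces $S = \langle\calP_0,\calP_1\rangle$, which is precisely the statement of the lemma. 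To construct $\calP_1$, I would use the explicit model of Appendix~\ref{app:models}, searching for a $\bbQ(\omega)$-rational divisor class $[P + P^\sigma - \infty^+ - \infty^-]$ (with $P, P^\sigma$ a pair of quadratic points interchanged by $\Gal(\bbQ(\omega)/\bbQ)$) that is killed by $3$, and certifying the $3$-torsion either by producing a rational function whose divisor is the appropriate multiple of the divisor --- a Riemann--Roch computation in the spirit of Lemma~\ref{lem:3infty} --- or by a direct verification in Magma. Independence of $\calP_1$ from $\calP_0$ is then automatic, since $J(\bbQ)[3]$ has order $3$ while $\calP_1 \notin J(\bbQ)$. (A more conceptual route is available: $X_1(1,3) \cong \Xell_1(18)$, and the extra $3$-torsion over $\bbQ(\omega) = \bbQ(\zeta_3)$ reflects a cuspidal divisor class, or an endomorphism, becoming rational over that cyclotomic field; but the explicit computation is the cleanest path to a rigorous proof.)

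Combining the two bounds gives $\#S = 9$ with $S = \langle\calP_0,\calP_1\rangle$, so every point of $J[3]$ of degree at most $2$ over $\bbQ$ is defined over $\bbQ(\omega)$. The main obstacle is the lower bound: locating the $\bbQ(\omega)$-rational $3$-torsion point $\calP_1$ and certifying that it really is $3$-torsion. This is curve-specific and somewhat delicate --- it requires enough control of the function field of $X_1(1,3)$ to exhibit a function whose divisor is three times a $\bbQ(\omega)$-rational degree-$0$ divisor, a calculation of the same flavor as, but more involved than, the analysis of $\calL(3\infty^+)$ in Lemma~\ref{lem:3infty}. The upper bound, by contrast, is routine once a suitable prime is in hand, and finding one is a short search.
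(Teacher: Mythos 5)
Your proposal is correct, but it proves the lemma by a genuinely different route than the paper. The paper's proof is a direct global enumeration: it shows that a $3$-torsion class $\{P,Q\}$ is cut out by a function $g = y - (Ax^3+Bx^2+Cx+D)$, converts the condition $(x-x_1)^3(x-x_2)^3 \mid q(x)$ into the $0$-dimensional system \eqref{eq:3tors_system} in $(A,B,C,D,t,n)$, has Magma find all $80$ nontrivial points of $J[3](\QQbar)$, and then filters for $[\bbQ(t,n):\bbQ]\le 2$, finding exactly eight such points, all with $(t,n)$ defined over $\bbQ(\omega)$. You instead sandwich the set $S$ of low-degree $3$-torsion points: the upper bound $\#S \le \#\widetilde{J}(\bbF_{p^2})[3]$ comes from injectivity of reduction on prime-to-$p$ torsion at a single fixed prime of $\QQbar$, which cleverly bounds the \emph{union} of $J(K)[3]$ over all quadratic $K$ at once (the paper uses reduction mod $5$ and $11$ in the proof of Theorem~\ref{thm:gen_two_torsion}(B) only field-by-field); taking $p=5$, which is inert in $\bbQ(\omega)$ and of good reduction, the paper's own computation $J_1(1,3)(\bbF_{5^2}) \cong (\bbZ/3\bbZ)^2 \oplus (\bbZ/7\bbZ)^2$ gives $\#S \le 9$. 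Your approach buys a much smaller computation (no need to solve for all $80$ points) at the cost of having to exhibit the lower bound explicitly. That is the one place your write-up stops short of a complete proof: you describe how you would search for the second generator $\calP_1 \in J(\bbQ(\omega))[3] \setminus J(\bbQ)$ but do not produce it. The point does exist --- in the paper's coordinates it is the class $[P+Q-\infty^+-\infty^-]$ where $x(P)+x(Q) = -2(\omega+1)$ and $x(P)x(Q)=\omega$ --- and certifying it is a finite Magma check of exactly the kind the paper itself relies on, so the gap is one of execution rather than of method. With that computation carried out, your argument is complete and arguably cleaner than the paper's.
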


\begin{proof}
Suppose $\{P,Q\}$ is a point of order 3 on $J$. This means that
	\begin{equation}\label{eq:3-torsion}
	[3P + 3Q - (3 \infty^+ + 3\infty^-)] = 3[P + Q - \infty^+ - \infty^-] = \calO,
	\end{equation}
so there is a function $g$ on $X$ whose divisor is $(g) = 3P + 3Q - (3 \infty^+ + 3\infty^-)$.

We first show that neither $P$ nor $Q$ may be a point at infinity. Suppose to the contrary that $Q = \infty^-$. (There is no loss of generality here: The pair $\{P,Q\}$ is unordered, so we are free to switch $P$ and $Q$, and if $\{P,\infty^+\}$ is a 3-torsion point, then so is $-\{P,\infty^+\} = \{\iota P, \infty^-\}$.) Then $\calO = [3P + 3Q - (3 \infty^+ + 3\infty^-)] = [3P - 3\infty^+]$. However, by Lemma~\ref{lem:3infty} this implies $P = \infty^+$, which means that $\{P,Q\} = \{\infty^+,\infty^-\} = \calO$, hence $\{P,Q\}$ does not have order $3$.

We now show that neither $P$ nor $Q$ may be a Weierstrass point. Suppose for contradiction that $P$ is a Weierstrass point. (Again, we lose no generality in doing so since $\{P,Q\}$ is unordered.) Then
	\begin{align*}
	\calO = [3P + 3Q - 3\infty^+ - 3\infty^-]
		= \{P,P\} + \{Q,Q\} + \{P,Q\}
		= \{Q,Q\} + \{P,Q\},
	\end{align*}
since $P$ is assumed to be a Weierstrass point. This implies that
	\[
		\{P,Q\} = -\{Q,Q\} = \{\iota Q, \iota Q\},
	\]
hence $P = \iota Q = Q$. It follows that $\{P,Q\} = \{P,P\} = \calO$, so $\{P,Q\}$ does not have order $3$.

Since neither $P$ nor $Q$ is a point at infinity, there is no cancellation in the difference $3P + 3Q - 3\infty^+ - 3\infty^-$, so the function $g$ must have zero divisor equal to $3P + 3Q$ and pole divisor equal to $D := 3\infty^+ + 3\infty^-$. By Riemann-Roch, $\dim \calL(D) = 5$; since the set
	\[ \{1, x, x^2, x^3, y\} \]
is a linearly independent set of elements of $\calL(D)$, it must be a basis. Therefore there exist scalars $a,b,c,d,e \in \bbC$ for which
	\[ g = ay + bx^3 + cx^2 + dx + e. \]
We claim that $a \ne 0$. Indeed, if $a = 0$, then the set of points on $X$ for which $g = 0$ is 
	\[ \calS := \{(x,\pm \sqrt{f_{18}(x)}) : bx^3 + cx^2 + dx + e = 0\}. \]
Since $(g) = 3(P + Q - \infty^+ - \infty^-)$, $\calS$ contains only two points. Thus either $g = 0$ has a single solution $x_0$, or $g = 0$ has two distinct solutions $x_1$ and $x_2$ with $f_{18}(x_1) = f_{18}(x_2) = 0$. In the former case, $P$ and $Q$ are hyperelliptic conjugates, so $\{P,Q\} = \calO$; in the latter, $P$ and $Q$ are distinct Weierstrass points, which we have already ruled out. In either case, $\{P,Q\}$ is not a point of order 3, so we must have $a \ne 0$; dividing by $a$ if necessary, we take $g$ to be of the form
	\[ g = y - (Ax^3 + Bx^2 + Cx + D), \]
which we rewrite as
	\begin{align*}
	g &= \frac{y^2 - (Ax^3 + Bx^2 + Cx + D)^2}{y + (Ax^3 + Bx^2 + Cx + D)}\\
		&= -\frac{q(x)}{y + (Ax^3 + Bx^2 + Cx + D)},
	\end{align*}
where
	\begin{align}\label{eq:3tors_poly1}
		\begin{split}
		q(x) = (A+1)(A-1)x^6 &+ 2(AB - 1)x^5 + (2AC + B^2 - 5)x^4 \\
			& + 2(AD + BC - 5)x^3 + (2BD + C^2 - 10)x^2 \\
			& + 2(CD - 2)x + (D+1)(D-1).
		\end{split}
	\end{align}
The function $q(x)$ must vanish to order 3 at each of $P = (x_1,y_1)$ and $Q = (x_2,y_2)$. Since $P$ and $Q$ are not Weierstrass points, $(x-x_1)$ and $(x-x_2)$ are uniformizers at $P$ and $Q$, respectively. Thus $(x-x_1)^3(x-x_2)^3$ must divide $q(x)$, hence $(A + 1)(A - 1) \ne 0$ and
	\begin{equation}\label{eq:3tors_poly2}
	q(x) = (A+1)(A-1)(x^2 - tx + n)^3,
	\end{equation}
where $t = x_1 + x_2$ and $n = x_1x_2$. Equating the coefficients of the expressions for $q(x)$ given in \eqref{eq:3tors_poly1} and \eqref{eq:3tors_poly2} yields the following system of equations:

	\begin{equation}\label{eq:3tors_system}
		\left\{
		\begin{split}
			\hfill 2(AB - 1) &= -3(A+1)(A-1)t\\
			\hfill 2AC + B^2 - 5 &= 3 (A+1)(A-1)(t^2 + n)\\
			\hfill 2(AD + BC - 5) &= -(A+1)(A-1)t(t^2 + 6n)\\
			\hfill 2BD + C^2 - 10 &= 3(A+1)(A-1)n(t^2 + n)\\
			\hfill 2(CD - 2) &= -3(A+1)(A-1)tn^2\\
			\hfill (D+1)(D-1) &= (A+1)(A-1)n^3
		\end{split}
		\right.
	\end{equation}
The system \eqref{eq:3tors_system} defines a $0$-dimensional scheme $S \subseteq \bbA^6$. In \cite[{\tt main.txt}]{doyle:cyclotomic}, we find all 80 points of $S(\QQbar)$, each of which corresponds to a point of order 3 in $J(\QQbar)$.

Now, in order for $\{P,Q\}$ to be a \emph{quadratic} point on $J$, say defined over the quadratic field $K$, either $P$ and $Q$ must both be defined over $K$, or $P$ and $Q$ must be Galois conjugates defined over some quadratic extension $L/K$. In either case, the parameters $t$ and $n$ must both lie in $K$. The only points in $S(\QQbar)$ with $[\bbQ(t,n):\bbQ] \le 2$ are the eight points satisfying
	\[
		(t,n) \in \{(-1,1),(-2(\omega + 1),\omega),(-2(\omega^2 + 1),\omega^2)\},
	\]
all of which are defined over $\bbQ(\omega)$. Therefore the only quadratic field over which $J$ gains additional 3-torsion is $\bbQ(\omega)$, and over this field there are a total of eight points (two of which are $\bbQ$-rational) of order $3$. The lemma now follows.
\end{proof}

\begin{proof}[Proof of Theorem~\ref{thm:gen_two_torsion}]
For (A), we note that 3 is a prime of good reduction for $J_1(4)$, and that
	\[ J_1(4)(\bbF_{3^2}) \cong (\bbZ/2\bbZ)^3 \oplus \bbZ/10\bbZ. \]
Moreover, since 5 is a prime of good reduction and $\# J_1(4)(\bbF_{5^2}) = 2^7 \cdot 5$, $J_1(4)(K)$ cannot have 3-torsion. Hence
	\[ J_1(4)(K)_{\tors} \hookrightarrow (\bbZ/2\bbZ)^3 \oplus \bbZ/10\bbZ. \]
Since $J_1(4)(\bbQ)_{\tors} \cong \bbZ/2\bbZ \oplus \bbZ/10\bbZ$, the only way for $J_1(4)(K)_{\tors}$ to be strictly larger than $J_1(4)(\bbQ)_{\tors}$ is for $J_1(4)$ to gain a 2-torsion point upon base change from $\bbQ$ to $K$. By Lemma~\ref{lem:2torsion}, the 2-torsion points are the points supported on the Weierstrass locus of $X_1(4)$. The Weierstrass points are $\infty$ and the points
	\[ P := (0,0), \ Q^{\pm} := (\pm i,0), \text{ and } R^{\pm} := (1 \pm \sqrt{2},0). \]
The sixteen points in $J_1(4)[2]$ are therefore those appearing in Table~\ref{tab:2torsion}. Hence the only quadratic fields over which $J_1(4)$ gains additional torsion are $\bbQ(i)$ and $\bbQ(\sqrt{2})$, and over each of these fields the torsion subgroup is isomorphic to $(\bbZ/2\bbZ)^2 \oplus \bbZ/10\bbZ$.

\begin{table}
	\centering
	\renewcommand{\arraystretch}{1.5}
	\caption{The 2-torsion points on $J_1(4)$}
	\label{tab:2torsion}
	\begin{tabular}{|c|c c c c|}
	\hline
	Field of definition & \multicolumn{4}{|c|}{Points} \\ \hline
	$\bbQ$ & $\calO$ & $\{\infty, P\}$ & $\{Q^+,Q^-\}$ & $\{R^+,R^-\}$ \\ \hline
	$\bbQ(i)$ & $\{\infty,Q^+\}$ & $\{\infty,Q^-\}$ & $\{P,Q^+\}$ & $\{P,Q^-\}$\\ \hline
	$\bbQ(\sqrt{2})$ & $\{\infty,R^+\}$ & $\{\infty,R^-\}$ & $\{P,R^+\}$ & $\{P,R^-\}$\\ \hline
	$\bbQ(i,\sqrt{2})$ & $\{Q^+,R^+\}$ & $\{Q^+,R^-\}$ & $\{Q^-,R^+\}$ & $\{Q^-,R^-\}$\\ \hline
	\end{tabular}
\end{table}

Next, we consider part (B). Since $J_1(1,3)$ has good reduction at the primes 5 and 11, we compute
	\begin{align*}
	J_1(1,3)(\bbF_{5^2}) &\cong (\bbZ/3\bbZ)^2 \oplus (\bbZ/7\bbZ)^2 , \  \\
	J_1(1,3)(\bbF_{11^2}) &\cong (\bbZ/4\bbZ)^2 \oplus (\bbZ/3\bbZ)^2 \oplus \bbZ/7\bbZ \oplus \bbZ/13\bbZ.
	\end{align*}
Therefore
	\[ J_1(1,3)(K)_{\tors} \hookrightarrow (\bbZ/3\bbZ)^2 \oplus \bbZ/7\bbZ = \bbZ/3\bbZ \oplus \bbZ/21\bbZ. \]
Since $J_1(1,3)(\bbQ) \cong \bbZ/21\bbZ$, the only way for $J_1(1,3)$ to gain torsion points over a quadratic field $K$ is to gain a point of order 3, in which case the full torsion subgroup is $\bbZ/3\bbZ \oplus \bbZ/21\bbZ$. We know from Lemma~\ref{lem:3torsion} that the only quadratic field over which $J_1(1,3)$ admits additional $K$-rational points of order 3 is $K = \bbQ(\omega)$, and (B) now follows.

For parts (C) and (D), we observe that 3 and 5 are both primes of good reduction for $J_1(2,3)$ and $J_1((2,3))$, and that
	\begin{alignat*}{4}
		\#J_1(2,3)(\bbF_{3^2}) &= 3 \cdot 19 & \hspace{10mm} \#J_1((2,3))(\bbF_{3^2}) &= 3^4;\\
		\#J_1(2,3)(\bbF_{5^2}) &= 19^2 & \#J_1((2,3))(\bbF_{5^2}) &= 19 \cdot 43.
	\end{alignat*}
Therefore $J_1(2,3)(K)_{\tors} \hookrightarrow \bbZ/19\bbZ$ and $J_1((2,3))(K)_{\tors} = 0$. Since $J_1(2,3)(\bbQ) \cong \bbZ/19\bbZ$, this proves (C) and (D).
\end{proof}

\begin{prop}\label{prop:gen2}
Let $G$ be an admissible graph for which $J_1(G)$ has genus $2$, and let $K$ be a quadratic field. Suppose $\rk J_1(G)(K) = \rk J_1(G)(\bbQ)$.
	\begin{enumerate}
		\item If $G$ is isomorphic to 8(4), 10(3,1,1), or 10(3,2), then $G$ does not occur as a subgraph of $G(f_c,K)$ for any $c \in K$.
		\item If $G = \rm 8(3)$, then the only $c \in K$ for which $G(f_c,K)$ contains a subgraph isomorphic to $G$ is $c = -29/16$, in which case $G(f_c,\bbQ) \cong {\rm8(3)}$.
	\end{enumerate}
\end{prop}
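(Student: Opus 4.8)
The plan is, for each of the four genus $2$ dynamical modular curves $X_1(G)$ and each quadratic field $K$ with $\rk J_1(G)(K) = \rk J_1(G)(\bbQ)$, to determine $X_1(G)(K)$ exactly and then to check which of those points (if any) lie on the open locus $U_1(G)$, equivalently correspond to a parameter $c$ whose preperiodic graph contains $G$ (cf.\ the definition of $U_1(G)$ and Lemma~\ref{lem:admissible}). The ingredients are the torsion computations of Theorem~\ref{thm:gen_two_torsion}, the saturation machinery of \textsection\ref{sec:quad} (Corollary~\ref{cor:saturation_samerank} and Proposition~\ref{prop:same_jac}), and Poonen's analysis \cite{poonen:1998} of the rational points of each of these four curves (with Morton's theorem \cite{morton:1998} supplying $U_1(4)(\bbQ) = \emptyset$).

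I begin by treating the ``generic'' situation, in which $J_1(G)(K)_{\tors} = J_1(G)(\bbQ)_{\tors}$; by Theorem~\ref{thm:gen_two_torsion} this holds for all but finitely many pairs $(G,K)$. For $G \in \{\mathrm{8(4)}, \mathrm{10(3,1,1)}, \mathrm{10(3,2)}\}$ we have $\rk J_1(G)(\bbQ) = 0$, so both $J_1(G)(\bbQ)$ and $J_1(G)(K)$ equal their torsion subgroups and hence $J_1(G)(K) = J_1(G)(\bbQ)$. For $G = \mathrm{8(3)}$, $J_1((2,3))(\bbQ) \cong \bbZ$ is torsion-free, so $J_1((2,3))(\bbQ)[2] = 0$, and $J_1((2,3))(K)_{\tors} = 0$ for every quadratic $K$ by Theorem~\ref{thm:gen_two_torsion}(D); Corollary~\ref{cor:saturation_samerank} (with $n = 2$) then gives $J_1((2,3))(K) = J_1((2,3))(\bbQ)$. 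In each case $X_1(G)$ has a $\bbQ$-rational point at infinity, so Proposition~\ref{prop:same_jac} forces $X_1(G)(K) = X_1(G)(\bbQ)$. Now invoking \cite{poonen:1998}: for $G \in \{\mathrm{8(4)}, \mathrm{10(3,1,1)}, \mathrm{10(3,2)}\}$ no rational point of $X_1(G)$ lies on $U_1(G)$, which is (A) for these fields; and the only rational points of $U_1((2,3))$ are the two with $c = -29/16$, for which $G(f_{-29/16}, \bbQ) \cong \mathrm{8(3)}$, which is (B).

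It remains to handle the finitely many pairs $(G,K)$ for which $J_1(G)$ genuinely gains torsion over $K$. By Theorem~\ref{thm:gen_two_torsion} these are precisely $G = \mathrm{8(4)}$ with $K \in \{\bbQ(i), \bbQ(\sqrt{2})\}$ and $G = \mathrm{10(3,1,1)}$ with $K = \bbQ(\omega)$; the Jacobians for $\mathrm{10(3,2)}$ and $\mathrm{8(3)}$ never gain torsion, so those graphs are already disposed of. In each such case the rank hypothesis still makes $J_1(G)(K)$ finite, and Theorem~\ref{thm:gen_two_torsion} together with Table~\ref{tab:2torsion} and Lemma~\ref{lem:3torsion} (which pin down the new torsion points) determines $J_1(G)(K)$ completely: $(\bbZ/2\bbZ)^2 \oplus \bbZ/10\bbZ$, of order $40$, in the first family and $\bbZ/3\bbZ \oplus \bbZ/21\bbZ$, of order $63$, in the second. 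Proposition~\ref{prop:same_jac} is no longer available, so instead I would use the Abel--Jacobi embedding $X_1(G) \hookrightarrow J_1(G)$ based at a $\bbQ$-rational point and enumerate, in Magma, which of the finitely many elements of $J_1(G)(K)$ lie on the image of the curve. The extra $K$-rational points that appear are Weierstrass points or cusps---the points $(\pm i, 0)$ and $(1 \pm \sqrt{2}, 0)$ on the model $y^2 = f_{16}(x)$ of $X_1(4)$ (over $\bbQ(i)$ and $\bbQ(\sqrt{2})$ respectively), and the points with $x$-coordinate $\omega$ or $\omega^2$ on the model $y^2 = f_{18}(x)$ of $X_1(1,3)$---and the map $X_1(G) \to \bbP^1$ onto the $c$-line recorded in Appendix~\ref{app:models} shows that none of them lies on $U_1(G)$. (Alternatively, by Najman's classifications \cite{najman:2010, najman:2011} no elliptic curve over $\bbQ(i)$ has a point of order $16$ and no elliptic curve over $\bbQ(\omega)$ has one of order $18$, so every new $K$-rational point of $X_1(4) \cong \Xell_1(16)$ or $X_1(1,3) \cong \Xell_1(18)$ must be a cusp.)

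The step I expect to be the main obstacle is exactly this last one: once $J_1(G)(K) \supsetneq J_1(G)(\bbQ)$, Proposition~\ref{prop:same_jac} gives no leverage, and one must perform the explicit finite determination of $X_1(G)(K)$ and then verify, point by point, that each extra $K$-rational point is cuspidal (or otherwise forced off $U_1(G)$ by Lemma~\ref{lem:admissible}). The remaining, generic cases collapse cleanly onto the torsion bookkeeping of Theorem~\ref{thm:gen_two_torsion} and the structural results of \textsection\ref{sec:quad}.
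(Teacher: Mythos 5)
Your proposal is correct and follows essentially the same route as the paper: reduce the generic case to $J_1(G)(K)=J_1(G)(\bbQ)$ via Theorem~\ref{thm:gen_two_torsion} and Corollary~\ref{cor:saturation_samerank}, apply Proposition~\ref{prop:same_jac} and Poonen's/Morton's determination of $U_1(G)(\bbQ)$, and then handle the finitely many torsion-growth cases ($\mathrm{8(4)}$ over $\bbQ(i)$, $\bbQ(\sqrt{2})$ and $\mathrm{10(3,1,1)}$ over $\bbQ(\omega)$) by explicitly enumerating the finite group $J_1(G)(K)$, recovering $X_1(G)(K)$, and checking via the $c$-map that the new points (which are exactly the ones the paper finds) miss $U_1(G)$. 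One small caveat: your parenthetical alternative via Najman's classification does not cover $\mathrm{8(4)}$ over $\bbQ(\sqrt{2})$ (Najman's results concern only $\bbQ(i)$ and $\bbQ(\omega)$), and the extra points on $X_1(1,3)$ over $\bbQ(\omega)$ are not Weierstrass points, but neither remark affects your main argument.
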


\begin{proof}
We begin with statement (A). If $G$ is one of the graphs 8(4), 10(3,1,1), or 10(3,2), then $\rk J_1(G)(\bbQ) = 0$, in which case the conditions
	\[\rk J_1(G)(K) = \rk J_1(G)(\bbQ) \ \mbox{ and } \ J_1(G)(K)_{\tors} = J_1(G)(\bbQ)_{\tors} \]
automatically imply that $J_1(G)(K) = J_1(G)(\bbQ)$. Since $X_1(G)(\bbQ) \ne \emptyset$ for each of these three graphs $G$, Proposition~\ref{prop:same_jac} immediately gives us $X_1(G)(K) = X_1(G)(\bbQ)$. Since $U_1(G)(\bbQ) = \emptyset$ for each $G$ (see \cite{poonen:1998,morton:1998}), we conclude that (A) holds if $J_1(G)(K)_\tors = J_1(G)(\bbQ)_\tors$. It remains to consider those quadratic fields $K$ for which $J_1(G)(K)_\tors \supsetneq J_1(G)(\bbQ)_\tors$.

We begin by considering $G = {\rm8(4)}$. By Theorem~\ref{thm:gen_two_torsion}, $J_1(G) = J_1(4)$ only gains torsion points over $K = \bbQ(i)$ and $K = \bbQ(\sqrt{2})$, and $J_1(G)$ still has rank $0$ over those two fields. Over each of these fields $K$, we can explicitly determine all forty elements of $J_1(G)(K)$, and we find no {\it non-trivial} points of the form $\{P,P\}$ with $P \in X_1(G)(K) \setminus X_1(G)(\bbQ)$. This means that the only additional $K$-rational points on $X_1(G)$ are the Weierstrass points: $(\pm i, 0)$ over $\bbQ(i)$, and $(1 \pm \sqrt{2},0)$ over $\bbQ(\sqrt{2})$. However, the points $(\pm i,0)$ correspond to $c = (\mp2i + 1)/4$, for which we have $G(f_c,\bbQ(i)) \cong {\rm4(1,1)}$, and the points $(1 \pm \sqrt{2},0)$ correspond to $c = -5/4$, for which we have $G(f_c,\bbQ(\sqrt{2})) \cong {\rm4(2)}$.

Now let $G = {\rm10(3,1,1)}$. The Jacobian $J_1(G) = J_1(1,3)$ only gains additional torsion over the quadratic field $K = \bbQ(\omega)$. As in the previous case, we can explicitly find all 63 points on $J_1(G)(K)$, which allows us to completely determine $X_1(G)(K)$. The only new points on $X_1(G)(K)$ are $(\omega, \pm(\omega - 1))$ and their Galois conjugates. These correspond to $c =
(1 + 3\omega)/4$ and its conjugate, for which we have $G(f_c,K) \cong {\rm4(1,1)}$.

For $G = {\rm10(3,2)}$, the torsion subgroup of $J_1(G) = J_1(2,3)$ is unchanged upon base change to any quadratic field $K$, so we are already done in this case.

We now prove (B), so let $G = {\rm8(3)}$. In this case, we have $\rk J_1(G)(\bbQ) = 1$, so assume $K$ is a quadratic field with $\rk J_1(G)(K) = 1$. Since $J_1(G)(K)_\tors$ is trivial for all quadratic fields $K$, Corollary~\ref{cor:saturation_samerank} tells us that $J_1(G)(K) = J_1(G)(\bbQ)$; since $X_1(G)(\bbQ)$ is nonempty, it follows that Proposition~\ref{prop:same_jac} that $X_1(G)(K) = X_1(G)(\bbQ)$. As shown in \cite[\textsection 4]{poonen:1998}, the only points on $U_1(G)(\bbQ)$---and, therefore, the only points on $U_1(G)(K)$---correspond to $c = -29/16$, in which case we have $G(f_c,\bbQ) \cong 8(3)$.
\end{proof}

\subsection[The curve $X_0(5)$]{The curve $X_0(5)$}\label{sec:X0(5)}

It is shown in \cite{flynn/poonen/schaefer:1997} that if $c \in \bbQ$, then $f_c$ cannot admit rational points of period 5. Rather than attempting to directly find all rational points on the genus $14$ curve $X_1(5)$, the authors of \cite{flynn/poonen/schaefer:1997} work with the quotient curve $X_0(5)$, which parametrizes maps $f_c$ together with a marked {\it cycle} of length $5$. The model given in \cite{flynn/poonen/schaefer:1997} for $X_0(5)$ is
	\begin{equation}\label{eq:X0(5)}
	y^2 = x^6  + 8x^5 + 22x^4 + 22x^3 + 5x^2 + 6x + 1.
	\end{equation}
They show that $\rk J_0(5)(\bbQ) = 1$, and then they determine that
	\[ X_0(5)(\bbQ) = \{(0,\pm 1), (-3,\pm 1), \infty^{\pm} \} \]
using a version of the Chabauty-Coleman method for genus $2$ curves developed by Flynn \cite{flynn:1997}. They conclude that the only values of $c \in \bbQ$ for which $f_c$ has a rational 5-cycle (i.e., the cycle is Galois invariant as a set, but not necessarily pointwise) are $-2$, $-16/9$, and $-64/9$. However, for each such $c$ the corresponding points of period 5 generate a degree 5 extension of $\bbQ$.

Since $X_0(5)$ has genus $2$, we may apply the methods used in the previous section to compute the torsion subgroup of $J_0(5)(K)$ for quadratic fields $K$. From this information, we will deduce a sufficient condition for a quadratic field $K$ to contain no elements $c$ for which $f_c$ admits $K$-rational points of period 5.

\begin{prop}
Let $K$ be a quadratic field. Then
	\[ J_0(5)(K)_{\tors} = 0. \]
\end{prop}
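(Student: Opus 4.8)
The plan is to mimic the proof of Theorem~\ref{thm:gen_two_torsion}(C)--(D): bound $J_0(5)(K)_{\tors}$ by reduction modulo primes of good reduction, then combine the resulting divisibility conditions until no nontrivial torsion can survive. Since $K \supseteq \bbQ$, it suffices to work directly with $J_0(5)(K)$ for $K$ quadratic (so in particular the rank computation of \cite{flynn/poonen/schaefer:1997} is not needed here, though of course $J_0(5)(\bbQ)_{\tors} = 0$ will fall out as a special case).

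First I would record that $5$ is the only prime of bad reduction for the model \eqref{eq:X0(5)} of $X_0(5)$, so that every rational prime $p \ne 5$ is a prime of good reduction for $J_0(5)$. Next, for a quadratic field $K$ and a prime $\frakp$ of $\OK$ lying over such a $p$, the residue field $k_\frakp$ embeds into $\bbF_{p^2}$, and reduction modulo $\frakp$ is injective on the prime-to-$p$ part of $J_0(5)(K)_{\tors}$; hence that part has order dividing $\#J_0(5)(\bbF_{p^2})$. Using Magma I would compute $\#J_0(5)(\bbF_{p^2})$ for two suitably chosen good primes $p_1, p_2$: if $\#J_0(5)(\bbF_{p_1^2})$ is a power of $p_1$, then $J_0(5)(K)_{\tors}$ is a $p_1$-group, and if in addition $p_1 \nmid \#J_0(5)(\bbF_{p_2^2})$, then that $p_1$-group---being prime to $p_2$---has order dividing $\#J_0(5)(\bbF_{p_2^2})$, forcing it to be trivial. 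More generally, every prime $\ell$ dividing $\#J_0(5)(K)_{\tors}$ must lie in the intersection $\bigcap_{p \ne 5} \bigl(\{\ell' : \ell' \mid \#J_0(5)(\bbF_{p^2})\} \cup \{p\}\bigr)$, and a short search over small good primes should make this intersection empty.

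The only part that is not purely mechanical is ruling out small even torsion, which is the most likely to persist through the reduction bounds; if it does, I would handle the prime $\ell = 2$ separately using Lemma~\ref{lem:2torsion}. By that lemma the nonzero $2$-torsion points of $J_0(5)$ are exactly the classes $\{P_i,P_j\}$ with $P_i \ne P_j$ Weierstrass points, i.e., with $P_i, P_j$ among the six roots of $x^6 + 8x^5 + 22x^4 + 22x^3 + 5x^2 + 6x + 1$. Such a class is defined over a quadratic field $K$ precisely when the corresponding $2$-element set of roots is stable under $\Gal(\Kbar/K)$, so $J_0(5)$ acquires a new $2$-torsion point over some quadratic field only if the Galois group of this sextic has a $2$-element orbit whose setwise stabilizer has index at most $2$; computing the factorization and Galois group of the sextic in Magma rules this out, whence $J_0(5)(K)_{\tors} = J_0(5)(\bbQ)_{\tors}$, and one further good-reduction bound gives $J_0(5)(\bbQ)_{\tors} = 0$. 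I expect the main obstacle to be nothing deeper than choosing the auxiliary primes so that the combined divisibility conditions collapse to the trivial group; everything else is a routine Magma computation.
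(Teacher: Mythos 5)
Your proposal is correct and takes essentially the same approach as the paper: the paper reduces modulo $p=3$ and $p=5$, computes $\#J_0(5)(\bbF_{3^2})=3^4$ and $\#J_0(5)(\bbF_{5^2})=29\cdot 41$, and concludes from coprimality exactly as you describe (prime-to-$3$ torsion is killed at $3$, so $J_0(5)(K)_\tors$ is a $3$-group, which then injects into $J_0(5)(\bbF_{5^2})$ and must vanish). The $2$-torsion backup via Lemma~\ref{lem:2torsion} turns out to be unnecessary. One small factual slip: you assert that $5$ is the \emph{only} prime of bad reduction for the model $y^2 = x^6 + 8x^5 + 22x^4 + 22x^3 + 5x^2 + 6x + 1$, but in fact $2$ is bad (the sextic reduces to $(x^3+x+1)^2$ modulo $2$) while $5$ is good --- indeed the paper uses $p=5$ as one of its two good primes --- so you would have discovered the correct good primes upon running the computation.
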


\begin{proof}
The primes $p = 3$ and $p = 5$ are primes of good reduction for the curve $X$ given in \eqref{eq:X0(5)}, which is birational to $X_0(5)$. Letting $J := \Jac(X)$, a computation in \cite[{\tt main.txt}]{doyle:cyclotomic} shows that
	\[ \#J(\bbF_{3^2}) = 3^4 \text{\ \ and\ \ } \#J(\bbF_{5^2}) = 29 \cdot 41. \]
As before, if $\frakp$ is any prime in $\OK$ lying above the rational prime $p$, then $\bbF_{\frakp} \hookrightarrow \bbF_{p^2}$ and, therefore, $J(\bbF_{\frakp}) \hookrightarrow J(\bbF_{p^2})$. Since $\#J(\bbF_{3^2})$ and $\#J(\bbF_{5^2})$ are coprime, we conclude that $J(K)_{\tors} = 0$.
\end{proof}

\begin{cor}\label{cor:5cycle}
Let $K$ be a quadratic field. If $\rk J_0(5)(K) = 1$, then there is no element $c \in K$ for which $f_c$ admits a $K$-rational point of period 5.
\end{cor}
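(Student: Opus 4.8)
The plan is to leverage the triviality of $J_0(5)(K)_{\tors}$ just proved, together with the facts from \cite{flynn/poonen/schaefer:1997} that $\rk J_0(5)(\bbQ) = 1$ and $J_0(5)(\bbQ)_{\tors} = 0$, to force $J_0(5)(K) = J_0(5)(\bbQ)$ via Corollary~\ref{cor:saturation_samerank}, then to conclude $X_0(5)(K) = X_0(5)(\bbQ)$ via Proposition~\ref{prop:same_jac}, and finally to read off the claim about period-$5$ points. Concretely, first I would check the hypotheses of Corollary~\ref{cor:saturation_samerank} applied to the abelian variety $J_0(5)$ over $\bbQ$ and the quadratic (hence Galois) extension $K/\bbQ$: by hypothesis $\rk J_0(5)(K) = 1 = \rk J_0(5)(\bbQ)$; the torsion subgroups agree since $J_0(5)(K)_{\tors} = 0 = J_0(5)(\bbQ)_{\tors}$; and $J_0(5)(\bbQ)[2] = 0$ because $J_0(5)(\bbQ)_{\tors}$ is already trivial. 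This gives $J_0(5)(K) = J_0(5)(\bbQ)$. Since $X_0(5)$ has genus $2$ and $X_0(5)(\bbQ) \neq \emptyset$ (it contains $(0,\pm 1)$, $(-3,\pm 1)$, and $\infty^{\pm}$), alternative (B) of Proposition~\ref{prop:same_jac} is excluded, so $X_0(5)(K) = X_0(5)(\bbQ) = \{(0,\pm 1),(-3,\pm 1),\infty^{\pm}\}$.

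Next I would argue by contradiction: suppose there is a $c \in K$ and a point $\alpha \in \bbP^1(K)$ of period $5$ for $f_c$. Then the $5$-cycle $\{\alpha, f_c(\alpha), \dots, f_c^4(\alpha)\}$ consists of $K$-rational points, so the pair consisting of $f_c$ and this cycle determines a $K$-rational point $P$ of $X_0(5)$. By the previous paragraph $P$ in fact lies in $X_0(5)(\bbQ)$, and since the structure map $X_0(5) \longto \bbA^1$ to the $c$-line is defined over $\bbQ$, this forces $c \in \bbQ$ and makes the $5$-cycle Galois-stable over $\bbQ$ as a set. The analysis of \cite{flynn/poonen/schaefer:1997} recalled in \textsection\ref{sec:X0(5)} then pins $c$ down to one of $-2$, $-16/9$, $-64/9$ and shows that the points of the cycle generate a degree-$5$ extension of $\bbQ$; equivalently, since $\Gal(\Kbar/\bbQ)$ acts on the $5$-cycle commuting with $f_c$, the Galois orbit of $\alpha$ is either a single point (so $\alpha \in \bbP^1(\bbQ)$, impossible by \cite{flynn/poonen/schaefer:1997}) or the whole cycle (so $[\bbQ(\alpha):\bbQ] = 5$). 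In either case $\alpha \notin K$, contradicting $\alpha \in \bbP^1(K)$. Hence no such $c$ exists.

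I do not expect a serious obstacle here: the real content is carried by the cited results and by the two propositions of \textsection\ref{sec:quad}, and the argument is essentially an assembly. The most delicate point to state carefully is the bridge between a $K$-rational period-$5$ point of $f_c$ and a $K$-rational point of $X_0(5)$ sitting over $c$, together with the observation that the Weierstrass exception in Proposition~\ref{prop:same_jac} genuinely cannot occur because $X_0(5)$ already has rational points. Once that is in place, the descent to the three known values of $c$ and the degree-$5$ contradiction finish the proof.
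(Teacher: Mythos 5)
Your proposal is correct and follows essentially the same route as the paper: apply Corollary~\ref{cor:saturation_samerank} (using $\rk J_0(5)(\bbQ)=1$ and the vanishing of torsion) to get $J_0(5)(K)=J_0(5)(\bbQ)$, then Proposition~\ref{prop:same_jac} (with case (B) excluded since $X_0(5)(\bbQ)\ne\emptyset$) to get $X_0(5)(K)=X_0(5)(\bbQ)$, and finally the Flynn--Poonen--Schaefer analysis showing the period-$5$ points for $c\in\{-2,-16/9,-64/9\}$ live in a degree-$5$ extension. Your extra care with the bridge from a $K$-rational $5$-cycle to a $K$-point of $X_0(5)$ and the Galois-orbit argument on the cycle is a welcome elaboration of details the paper leaves implicit, but the argument is the same.
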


\begin{proof}
Let $X := X_0(5)$ and $J := J_0(5)$. If $\rk J(K) = 1$, then we have $\rk J(K) = \rk J(\bbQ)$ and $J(K)_{\tors} = 0$, hence $J(K) = J(\bbQ)$ by Corollary~\ref{cor:saturation_samerank}. Since $X$ has rational points, we conclude from Proposition~\ref{prop:same_jac} that $X(K) = X(\bbQ)$, so the only $c \in K$ such that $f_c$ has a $K$-rational 5-cycle are $c \in \{-2,-16/9,-64/9\}$. However, as mentioned above, the points of period 5 must actually lie in a degree 5 extension of $K$, so $f_c$ has no $K$-rational points of period 5.
\end{proof}

\begin{thm}\label{thm:5cycle_cyclotomic}
Let $K$ be the field $\bbQ(i)$ or $\bbQ(\omega)$. There is no element $c \in K$ for which $f_c$ admits a $K$-rational point of period 5.
\end{thm}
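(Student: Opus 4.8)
By Corollary~\ref{cor:5cycle}, it suffices to show that $\rk J_0(5)(K) = 1$ for $K = \bbQ(i)$ and $K = \bbQ(\omega)$. Since $\rk J_0(5)(\bbQ) = 1$ and rank is nondecreasing under base extension, the real content is the upper bound $\rk J_0(5)(K) \le 1$. The plan is to prove this by a descent computation: the quadratic twist decomposition gives $\rk J_0(5)(K) = \rk J_0(5)(\bbQ) + \rk J_0^{(d)}(5)(\bbQ)$, where $J_0^{(d)}(5)$ is the quadratic twist of $J_0(5)$ by $d \in \{-1,-3\}$, so it suffices to show $\rk J_0^{(d)}(5)(\bbQ) = 0$ for these two values of $d$.

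First I would write down an explicit model for each twist $X_0^{(d)}(5)$: starting from \eqref{eq:X0(5)}, the twist by $d$ is $dy^2 = x^6 + 8x^5 + 22x^4 + 22x^3 + 5x^2 + 6x + 1$, or equivalently $y^2 = d(x^6 + 8x^5 + \cdots + 1)$ after absorbing a factor of $d^3$; I would then hand these genus~$2$ curves to Magma. The natural tool is a $2$-descent on the Jacobian (via the \texttt{RankBound} or \texttt{TwoSelmerGroup} machinery for Jacobians of genus~$2$ curves, or a descent on an associated elliptic quotient if the twist happens to have extra structure). The hoped-for outcome is that the $2$-Selmer rank of each twist is $0$ (or small enough that the Mordell-Weil rank is pinned to $0$), which forces $\rk J_0^{(d)}(5)(\bbQ) = 0$ and hence $\rk J_0(5)(K) = 1$.

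The main obstacle is the usual one for descent over $\bbQ$: the $2$-Selmer bound need not be sharp, and a descent might only show $\rk J_0^{(d)}(5)(\bbQ) \le r$ for some $r \ge 1$. In that event the fallback is to search for rational points on the twist (or on $X_0(5)$ over $K$ directly) up to a reasonable height, confirm that no points beyond the known six $\bbQ$-rational points of $X_0(5)$ appear, and then either invoke a second descent / Cassels-Tate pairing computation to improve the bound, or run a Mordell-Weil sieve combined with the point search to rule out the extra rank. Since the analogous computation over $\bbQ$ in \cite{flynn/poonen/schaefer:1997} succeeded with a clean rank-$1$ answer, and since the twists by $-1$ and $-3$ are individual curves (not a family), I expect the $2$-descent to resolve both cases directly, with the point search serving only as a sanity check.

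Once $\rk J_0(5)(K) = 1$ is established for $K \in \{\bbQ(i),\bbQ(\omega)\}$, Corollary~\ref{cor:5cycle} applies verbatim and yields the theorem. I would remark that this argument does not determine $X_0(5)(K)$ itself beyond what Proposition~\ref{prop:same_jac} gives (namely $X_0(5)(K) = X_0(5)(\bbQ)$, since $J_0(5)$ has trivial torsion over every quadratic field and the rank does not grow), but that is already more than enough: the $c$-values $-2, -16/9, -64/9$ arising from $X_0(5)(\bbQ)$ have their period-$5$ points defined over a degree-$5$ extension, which remains linearly disjoint from any quadratic field, so no $K$-rational period-$5$ point can exist.
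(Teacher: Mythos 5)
Your plan works for $\bbQ(\omega)$ and is exactly what the paper does there: the twist of $X_0(5)$ by $-3$ has rank $0$ over $\bbQ$, so $\rk J_0(5)(\bbQ(\omega)) = 1$ and Corollary~\ref{cor:5cycle} applies. But for $\bbQ(i)$ the key computation you are counting on fails: a $2$-descent shows that the twist $X_0(5)^{(-1)}: y^2 = -f(x)$ has Jacobian of rank $1$ over $\bbQ$, not $0$, and this is not a failure of sharpness in the Selmer bound --- there is an explicit point of infinite order on $J_0(5)^{(-1)}(\bbQ)$ (the paper exhibits the generator $\{(\alpha,\alpha/2),(\alpha',\alpha'/2)\}$ with $\alpha,\alpha'$ the roots of $x^2+3x+1/2$). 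Consequently $\rk J_0(5)(\bbQ(i)) = 2$, which equals the genus of $X_0(5)$, so Corollary~\ref{cor:5cycle} is simply unavailable, and none of your fallbacks (second descent, Cassels--Tate, point search plus Mordell--Weil sieve in the form you describe) can lower a rank that is genuinely $1$; nor does classical Chabauty--Coleman apply once $r = g$.

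This is why the paper devotes Appendix~\ref{app:X0(5)} to the $\bbQ(i)$ case. The fix is a restriction-of-scalars variant of Chabauty (the heuristic Siksek attributes to Wetherell): working at the totally ramified prime $\frakp = (1+i)$ above $2$, one views $J_0(5)(\bbQ_2(i))$ as $A(\bbQ_2)$ for $A = \Res_{K/\bbQ} J_0(5)$ and takes ``real'' and ``imaginary'' parts of the $\frakp$-adic integrals of a basis of regular differentials, obtaining $2g = 4$ linear functionals on a space in which the closure of $J_0(5)(K)$ is only $2$-dimensional. The condition $r \le d(g-1) = 2$ is met, one finds two independent annihilating functionals as bivariate power series on each residue disk, and a multivariate Hensel argument bounds the zeros disk by disk, yielding $X_0(5)(\bbQ(i)) = X_0(5)(\bbQ)$. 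Your closing observation --- that the three $c$-values coming from $X_0(5)(\bbQ)$ have their period-$5$ points in a degree-$5$ extension, hence contribute no $K$-rational periodic points --- is correct and is the same final step the paper uses.
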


\begin{proof}
In both cases, we have $X_0(5)(K) = X_0(5)(\bbQ)$: For $K = \bbQ(\omega)$, this follows from the fact that the twist of $X_0(5)$ by $-3$ has rank $0$ over $\bbQ$, so the rank of $J_0(5)(K)$ is equal to $1$, and therefore Corollary~\ref{cor:5cycle} applies. For $K = \bbQ(i)$, the rank of $J_0(5)$ is equal to $2$, so the proof of this statement requires a more involved argument. We therefore defer the proof, which uses a Chabauty-Coleman-style calculation, to Appendix~\ref{app:X0(5)}.
\end{proof}

\section[Preperiodic points over cyclotomic quadratic fields]{Preperiodic points over cyclotomic quadratic fields}\label{sec:specific}

We now move from making general statements that hold over arbitrary quadratic fields to giving results over two particular quadratic fields---namely, the cyclotomic quadratic fields. Our main result is a conditional classification result like that of Poonen \cite{poonen:1998}, but over these two quadratic extensions of $\bbQ$ rather than over $\bbQ$ itself.

We begin by restricting the cycle structures that can appear for a graph $G(f_c,K)$ with $K$ a quadratic cyclotomic field and $c \in K$.

\begin{lem}\label{lem:structures}
Let $K$ be the field $\bbQ(i)$ or $K = \bbQ(\omega)$, let $c \in K$, and assume $f_c$ does not admit points of period greater than $5$. If $G(f_c,K)$ is strongly admissible, then the cycle structure of $G(f_c,K)$ is $(1,1)$, $(2)$, $(3)$, or $(1,1,2)$
\end{lem}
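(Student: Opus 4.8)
The plan is to enumerate the possible cycle structures of a strongly admissible graph $G(f_c,K)$ under the standing hypotheses and rule out all but the four listed. A quadratic polynomial $f_c$ has at most two fixed points, at most one $2$-cycle (since $r(2) = 1$), and at most two $3$-cycles (since $r(3) = 2$); by hypothesis there are no cycles of length $4$ or $5$; and any cycle of length $\ge 6$ is excluded because over a quadratic field $K$, a periodic point of period $N$ lies in an extension of degree dividing some orbit-length quantity, but more directly: the only way $f_c$ could have a $K$-rational point of period $N \ge 6$ with $c \in K$ is via a $K$-point on $U_1(N)$, and $X_1(N)$ has genus $> 2$ for $N \ge 5$, so — wait, the cleaner route is to invoke that we have assumed away periods $> 5$ and period $5$ is killed by Theorem~\ref{thm:5cycle_cyclotomic}. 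So the cycles present have lengths drawn from $\{1,2,3\}$, with at most two $1$-cycles, at most one $2$-cycle, at most two $3$-cycles.

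This leaves a finite list of candidate multisets of cycle lengths. Strong admissibility forces the number of fixed points to be exactly $0$ or $2$ (never $1$), so the "$1$" part of the cycle structure is either absent or is $(1,1)$. Enumerating: if there are two $3$-cycles, the structure contains $(3,3)$, but then $G(f_c,K)$ contains a subgraph whose dynamical modular curve is $X_1(3,3)$, which has genus $4$ (Morton \cite{morton:1992}), so it has only finitely many quadratic points, and in fact — here I would appeal to the genus/gonality results of \textsection\ref{sec:genus_gonality} together with the torsion computations to show $U_1(3,3)(K) = \emptyset$ for $K \in \{\bbQ(i),\bbQ(\omega)\}$; more simply, since $X_1(3,3)$ has genus $4 > 2$ and is not covered by the cases listed in Proposition~\ref{prop:all_dyn_mod_curves}, and since the only dynamical modular curves with infinitely many quadratic points have genus $\le 2$, one checks the finitely many quadratic points do not lie on $U_1(3,3)$. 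Similarly, if a $3$-cycle coexists with a $2$-cycle the relevant curve is $X_1(2,3) = \Xell_1(13)$ of genus $2$, whose quadratic points over cyclotomic fields were pinned down via Theorem~\ref{thm:gen_two_torsion}(C) and Proposition~\ref{prop:gen2}(A), giving $U_1(2,3)(K) = \emptyset$; and a $3$-cycle with two fixed points uses $X_1(1,3) = \Xell_1(18)$ of genus $2$, handled by Theorem~\ref{thm:gen_two_torsion}(B) and Proposition~\ref{prop:gen2}(A), again empty.

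So whenever a $3$-cycle is present, no other cycle can occur: the structure must be exactly $(3)$. If no $3$-cycle is present, the cycles are fixed points and at most one $2$-cycle, and strong admissibility collapses the fixed-point count to $0$ or $2$; the possible structures are then $\emptyset$ (meaning: only a $2$-cycle must be — no, $\emptyset$ would force the empty graph, which does arise, structure "none"), $(1,1)$, $(2)$, and $(1,1,2)$. Since Lemma~\ref{lem:structures} as stated already allows $(1,1),(2),(3),(1,1,2)$, the empty/trivial case is subsumed (a graph with no cycles has no preperiodic points in the relevant sense — actually $G(f_c,K)$ always contains at least one cycle because $f_c$ always has fixed points over $\Kbar$; over $K$ the fixed points are rational iff $1-4c$ is a square, but even graph $0$ in the classification has the convention that infinity is omitted, so "$0$" has a $K$-rational fixed point at infinity implicitly). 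I would state this carefully: every $G(f_c,K)$ contains a cycle, and by the above it is one of the four listed structures.

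\textbf{Main obstacle.} The crux is excluding the structures $(3,3)$, $(2,3)$, and $(1,1,3)$ — i.e., showing that a $3$-cycle cannot coexist with anything else over $\bbQ(i)$ or $\bbQ(\omega)$. For $(3,3)$ this is genus $4$ and needs the input that dynamical modular curves of genus $> 2$ have only finitely many quadratic points (from \cite{doyle/krumm/wetherell}) plus a check that none of those finitely many lie on $U_1(3,3)(K)$; for $(2,3)$ and $(1,1,3)$ it rests squarely on the genus-$2$ torsion computations of Theorem~\ref{thm:gen_two_torsion} combined with Proposition~\ref{prop:same_jac} and the observation that the rank does not jump over the cyclotomic fields (the relevant ranks over $\bbQ$ being $0$). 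Assembling exactly which quadratic points appear and confirming they avoid $U_1$ is where the real work — already done in Propositions~\ref{prop:gen1} and \ref{prop:gen2} — gets invoked.
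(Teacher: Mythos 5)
Your overall strategy matches the paper's: reduce to cycle lengths in $\{1,2,3\}$, invoke the bounds on the number of cycles of each length for a quadratic polynomial, and then kill the structures $(1,1,3)$ and $(2,3)$ via the rank-zero computations $\rk J_1(1,3)(K) = \rk J_1(2,3)(K) = 0$ together with Proposition~\ref{prop:gen2}(A). However, there is one genuine gap: you never rule out $4$-cycles. You write that ``by hypothesis there are no cycles of length $4$ or $5$,'' but the hypothesis of the lemma only excludes points of period \emph{greater than} $5$; periods $4$ and $5$ are permitted by hypothesis and must be excluded by argument. You correctly dispose of period $5$ via Theorem~\ref{thm:5cycle_cyclotomic}, but period $4$ is left untouched, and it cannot be dismissed on genus grounds alone since $X_1(4) \cong \Xell_1(16)$ has genus $2$ and so could a priori carry infinitely many quadratic points. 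The paper closes this by computing $\rk J_1(4)(K) = 0$ for $K = \bbQ(i)$ and $K = \bbQ(\omega)$ and applying Proposition~\ref{prop:gen2}(A) to the graph $\rm 8(4)$, which shows $U_1(4)(K) = \emptyset$ (the only new $K$-rational points on $X_1(4)$ over these fields are Weierstrass points corresponding to smaller graphs, as worked out in the proof of Proposition~\ref{prop:gen2} using Theorem~\ref{thm:gen_two_torsion}(A)).

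A secondary point: for the structure $(3,3)$ you gesture at checking that the finitely many quadratic points of the genus-$4$ curve $X_1(3,3)$ avoid $U_1(3,3)$, but you do not carry this out. The paper sidesteps this entirely by citing the classification from \cite[\textsection 4]{doyle:2018quad}, which already shows that over \emph{any} quadratic field a strongly admissible $G(f_c,K)$ with no cycle of length exceeding $3$ has cycle structure among $(1,1)$, $(2)$, $(3)$, $(1,1,2)$, $(1,1,3)$, $(2,3)$ --- so $(3,3)$ never needs separate treatment here. Your enumeration-from-scratch route is viable in principle but leaves that verification outstanding.
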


\begin{rem}
It was shown by Erkama in \cite{erkama:2006} that if $c \in \bbQ(i)$, then $f_c$ cannot have $\bbQ(i)$-rational points of period 4. His proof uses different techniques from ours, including an interesting {\it $2$}-dimensional dynamical system that models iteration of the family $f_c$.
\end{rem}

\begin{proof}[Proof of Lemma~\ref{lem:structures}]
We find in \cite[{\tt main.txt}]{doyle:cyclotomic} that $\rk J_1(4)(K) = 0$ for both fields $K$, thus by Proposition~\ref{prop:gen2} there is no $c \in K$ for which $f_c$ has a $K$-rational point of period $4$. Further, Theorem~\ref{thm:5cycle_cyclotomic} says that there is no $c \in K$ for which $f_c$ has a $K$-rational point of period $5$. Thus, we now assume that $f_c$ has no $K$-rational points of period greater than $3$.

It follows from the results in \cite[\textsection 4]{doyle:2018quad} that if $K$ is {\it any} quadratic field, $c \in K$, and $G(f_c,K)$ is strongly admissible with no cycles of length greater than $3$, then the cycle structure of $G(f_c,K)$ must be one of the following:
\[
	(1,1),\ (2),\ (3),\ (1,1,2),\ (1,1,3),\ (2,3).
\]
It therefore remains to show that for both fields $K$ under consideration, if $c \in K$ admits a $K$-rational point of period 3, then it has no $K$-rational points of period 1 or 2. Indeed, \cite[{\tt main.txt}]{doyle:cyclotomic} shows that for both fields $K$, $\rk J_1(1,3)(K) = \rk J_1(2,3)(K) = 0$, so the result follows from Proposition~\ref{prop:gen2}.
\end{proof}

\begin{figure}
\[ \footnotesize
	\xymatrix{
		G_1 \ar[d]\ar[dr] & \text{10(1,1)a/b} \ar[d] & \text{10(2,1,1)b} \ar[ddl]\ar[dr] & \fbox{10(2,1,1)a} \ar[d]\ar[ddrr] & G_2 \ar[dr]\ar[drr] & G_3 \ar[dr] & \text{10(2)} \ar[d] & \text{10(3)a/b} \ar[d]\\
		\text{8(1,1)a} \ar[dr] & \text{8(1,1)b} \ar[d] & & \dbox{\fbox{8(2,1,1)}} \ar[ddll]\ar[ddrr] & & \dbox{8(2)a} \ar[d] & \text{8(2)b} \ar[dl] & \dbox{\fbox{8(3)}} \ar[d]\\
		& \dbox{\fbox{6(1,1)}} \ar[d] & & & & \dbox{\fbox{6(2)}} \ar[d] & & \dbox{\fbox{6(3)}} \\
		& \dbox{\fbox{4(1,1)}} & & & & \dbox{\fbox{4(2)}} &
	}
\]
\caption{Strongly admissible graphs with at most ten vertices and cycle structure (1,1), (2), (3), or (1,1,2). There is a directed path from $G$ to $H$ if and only if $H \subset G$. A graph has a \fbox{solid} (resp., \protect\dbox{dashed}) box around it if it is realized as $G(f_c,K)$ over $K = \bbQ(i)$ (resp., $K = \bbQ(\omega)$).} 
\label{fig:directed_system}
\end{figure}
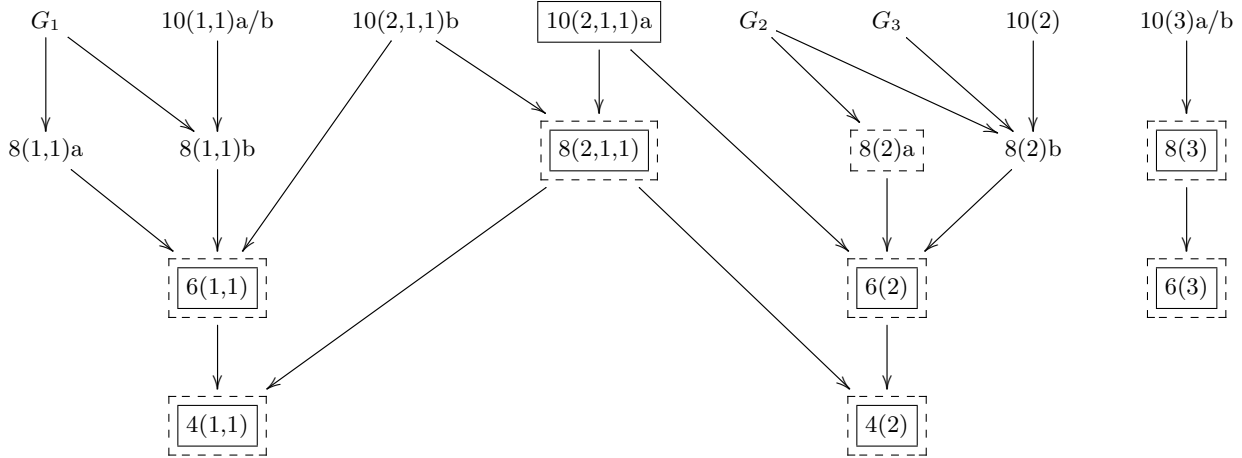

We now briefly sketch an outline of the proof of Theorem~\ref{thm:main}, which we have separated into Propositions~\ref{prop:main_i} and \ref{prop:main_omega} for $\bbQ(i)$ and $\bbQ(\omega)$, respectively. For each cyclotomic quadratic field $K$, certain small strongly admissible graphs $G$ do occur as $G(f_c,K)$ for some $c \in K$; the known such graphs appear in Appendix~\ref{app:all_data} and are indicated for convenience in Figure~\ref{fig:directed_system}. For reference, Figure~\ref{fig:directed_system} actually includes \textit{all} strongly admissible graphs with at most ten vertices and having cycle structures allowed by Lemma~\ref{lem:structures}.

For each cycle structure allowed by Lemma~\ref{lem:structures}, we then consider those strongly admissible graphs (with the given cycle structure) that are {\it minimal} among those not known to occur over $K$. For each such graph $G$, we show that any $K$-rational points on $X_1(G)(K)$ actually correspond to parameters $c \in K$ for which $G(f_c,K)$ is not isomorphic to $G$; i.e., we show that $U_1(G)(K)$ is empty.

This final step relies on the results of \textsection \ref{sec:quad}. For many of the arguments, we use the fact that the Jacobians of certain dynamical modular curves have rank $0$ over various quadratic fields. In every such case, the relevant rank computations were performed using the method of $2$-descent implemented in Magma's \texttt{RankBound} function.

We now prove Theorem~\ref{thm:main}, which we state as two separate propositions for convenience. As before, all graphs appear in Appendix~\ref{app:all_data}.

\begin{prop}\label{prop:main_i}
Let $K = \bbQ(i)$, and let $c \in K$. Suppose $f_c$ does not admit $K$-rational points of period greater than 5. Then $G(f_c,K)$ is isomorphic to one of the following fourteen graphs:
	\begin{center}
		\rm 0, 3(2), 4(1,1), 4(2), 5(1,1)a, 5(1,1)b, 5(2)a, 6(1,1), 6(2), 6(2,1), 6(3), 8(2,1,1), 8(3), 10(2,1,1)a.
	\end{center}
\end{prop}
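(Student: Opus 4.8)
The plan is to separate cases according to Lemma~\ref{lem:admissible}: either $G(f_c,\bbQ(i))$ is strongly admissible, or $c = 1/4$, or $0 \in \PrePer(f_c,\bbQ(i))$. The last two are treated by hand. There is a unique parameter, $c = 1/4$, for which $f_c$ has a single fixed point; a direct computation over $\bbQ(i)$---using that $-1/2 \pm i$ is a $\bbQ(i)$-rational $2$-cycle for $f_{1/4}$---gives $G(f_{1/4},\bbQ(i)) \cong {\rm 6(2,1)}$. If $0 \in \PrePer(f_c,\bbQ(i))$ with the critical point of portrait $(m,n)$, then $\Phi_{m,n}(0,c) = 0$, a one-variable polynomial equation confining $c$ to finitely many explicit values for each fixed $(m,n)$; the hypothesis $n \le 5$ together with a bound on the size of any admissible graph realizable over a quadratic field with all cycles of length at most $5$ (which falls out of the strongly admissible analysis below) bounds $m$ as well, leaving a finite list of parameters to inspect one at a time. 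These produce the remaining non-strongly-admissible graphs ${\rm 3(2)}$, ${\rm 5(1,1)a}$, ${\rm 5(1,1)b}$, ${\rm 5(2)a}$, together with the empty graph ${\rm 0}$ for generic $c$.

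For strongly admissible $G(f_c,\bbQ(i))$, Lemma~\ref{lem:structures} restricts the cycle structure to $(1,1)$, $(2)$, $(3)$, $(1,1,2)$, or the empty structure (the last forcing $G(f_c,\bbQ(i)) \cong {\rm 0}$). The curves $X_1(G)$ form an inverse system whose transition maps have degree at least $2$ (\cite[Prop.~3.3]{doyle:2019}), and by the definition of $U_1$ a strongly admissible $G(f_c,\bbQ(i))$ containing a subgraph isomorphic to $H$ forces $U_1(H)(\bbQ(i)) \neq \emptyset$; so for each cycle structure $\tau$ it suffices to verify $U_1(G)(\bbQ(i)) = \emptyset$ for every strongly admissible $\tau$-graph $G$ minimal among those not already known to be realized over $\bbQ(i)$ (Figure~\ref{fig:directed_system}), after which no larger $\tau$-graph can occur either. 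A short combinatorial check identifies these minimal graphs: ${\rm 8(1,1)a}$ and ${\rm 8(1,1)b}$ for $\tau = (1,1)$; ${\rm 8(2)a}$ and ${\rm 8(2)b}$ for $\tau = (2)$; ${\rm 10(2,1,1)b}$ and the $12$-vertex extensions of ${\rm 10(2,1,1)a}$ for $\tau = (1,1,2)$; and for $\tau = (3)$ it is enough to show nothing strictly larger than ${\rm 8(3)}$ is realized.

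The genus-$1$ graphs are exactly those treated in Proposition~\ref{prop:gen1}: once a \texttt{RankBound} computation confirms that $X_1(G)(\bbQ(i))$ has rank $0$ for $G \in \{{\rm 8(1,1)a}, {\rm 8(1,1)b}, {\rm 8(2)a}, {\rm 8(2)b}, {\rm 10(2,1,1)b}\}$ (the curves ${\rm 24A4}$, ${\rm 11A3}$, ${\rm 40A3}$, ${\rm 11A3}$, ${\rm 15A8}$, all of rank $0$ over $\bbQ$), Proposition~\ref{prop:gen1}(A) gives $U_1(G)(\bbQ(i)) = \emptyset$; the proof of Proposition~\ref{prop:gen1} already checks that the only extra $\bbQ(i)$-points appearing---from the torsion growth on ${\rm 24A4}$---are points at infinity or parameters with a strictly smaller graph (e.g. $c = 0$, giving ${\rm 5(1,1)b}$). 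For $\tau = (3)$: after checking $\rk J_1({\rm 8(3)})(\bbQ(i)) = 1$, Proposition~\ref{prop:gen2}(B) shows the only $c \in \bbQ(i)$ whose graph contains ${\rm 8(3)}$ is $c = -29/16$, and a direct computation gives $G(f_{-29/16},\bbQ(i)) \cong {\rm 8(3)}$, so no $(3)$-graph exceeds ${\rm 8(3)}$. Finally, the $12$-vertex extensions of ${\rm 10(2,1,1)a}$ require no new curve: each either contains ${\rm 10(2,1,1)b}$ (already excluded) or has ``$2$-cycle component'' one of the $8$-vertex graphs ${\rm 8(2)a}$, ${\rm 8(2)b}$ (also excluded), so it contains some $H$ with $U_1(H)(\bbQ(i)) = \emptyset$, a contradiction.

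The main obstacle I anticipate is the cycle structure $(1,1,2)$: here ${\rm 10(2,1,1)a}$ \emph{is} realized over $\bbQ(i)$---indeed $X_1({\rm 10(2,1,1)a})$ must have positive Mordell--Weil rank over $\bbQ(i)$, since its torsion does not grow yet it acquires new points---so one cannot truncate the inverse system at ${\rm 10(2,1,1)a}$, and must instead establish both the emptiness of $U_1({\rm 10(2,1,1)b})(\bbQ(i))$ and the combinatorial fact that every strongly admissible graph properly containing ${\rm 10(2,1,1)a}$ contains either ${\rm 10(2,1,1)b}$ or an already-excluded $(1,1)$- or $(2)$-subgraph. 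A secondary concern is that the argument leans on Magma's \texttt{RankBound} returning sharp upper bounds over $\bbQ(i)$ for all the relevant Jacobians (namely $0$, except $1$ for $J_1({\rm 8(3)})$); were one of these not sharp, that curve would require a more careful descent or a Chabauty-type argument in the spirit of Appendix~\ref{app:X0(5)}. Granting these, assembling the four cycle structures shows the strongly admissible graphs realized over $\bbQ(i)$ are exactly ${\rm 0}$, ${\rm 4(1,1)}$, ${\rm 4(2)}$, ${\rm 6(1,1)}$, ${\rm 6(2)}$, ${\rm 6(3)}$, ${\rm 8(2,1,1)}$, ${\rm 8(3)}$, ${\rm 10(2,1,1)a}$, which together with ${\rm 3(2)}$, ${\rm 5(1,1)a}$, ${\rm 5(1,1)b}$, ${\rm 5(2)a}$, ${\rm 6(2,1)}$ yields the fourteen graphs in the statement.
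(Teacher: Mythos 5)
Your architecture for the strongly admissible case is the same as the paper's: Lemma~\ref{lem:structures} to restrict cycle structures, Proposition~\ref{prop:gen1} to kill the genus-one graphs $8(1,1)$a/b, $8(2)$a/b, $10(2,1,1)$b, and Proposition~\ref{prop:gen2} to pin $8(3)$ to $c=-29/16$. The genuine divergence is your treatment of the graphs properly containing $10(2,1,1)\mathrm{a}$, and your route there is actually \emph{shorter} than the paper's, provided you make the combinatorics explicit. The three $12$-vertex strongly admissible extensions of $10(2,1,1)\mathrm{a}=4(1,1)\sqcup 6(2)$ are obtained by adjoining one pair of preimages to an in-degree-zero vertex; they are $G_6=6(1,1)\sqcup 6(2)$, which contains $10(2,1,1)$b, and the two graphs $4(1,1)\sqcup H$ with $H$ one of the two isomorphism classes of eight-vertex admissible graphs with a single $2$-cycle, i.e.\ $H\in\{8(2)\mathrm{a},8(2)\mathrm{b}\}$ (these two graphs are $12(2,1,1)$a and $G_4$). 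Since part (A) already excludes $8(2)$a, $8(2)$b, and $10(2,1,1)$b as subgraphs over $\bbQ(i)$, all three extensions die at once. The paper instead imports two arguments from the all-quadratic-fields setting, where $8(2)$a/b can occur and this containment argument is unavailable: it cites \cite[Cor.~3.36]{doyle/faber/krumm:2014} for $12(2,1,1)$a and runs an explicit rational-point computation on the $-1$-twist of $X_1(G_4)$ (via the elliptic curve 176B1) for $G_4$. Over $\bbQ(i)$ specifically, your observation makes both of those steps unnecessary.

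The one real gap is in your handling of the non-strongly-admissible case, which the paper disposes of by citing \cite[\S 5]{doyle/faber/krumm:2014}. You propose to bound the preperiod $m$ of the critical point by ``a bound on the size of any admissible graph realizable over a quadratic field,'' but when $0\in\PrePer(f_c,K)$ the graph $G(f_c,K)$ is \emph{not} admissible --- the vertex $c=f_c(0)$ has in-degree $1$ --- so that bound does not apply to it as stated. To repair this you must pass to an admissible subgraph: for $m\ge 2$ the point $f_c^2(0)$ has portrait $(m-2,n)$ and (away from finitely many degenerate $c$ where forward-orbit points coincide with their negatives) generates an admissible subgraph of $G(f_c,K)$ whose size \emph{is} controlled by the strongly admissible analysis, which then bounds $m$; alternatively, use the standard fact that a preperiodic critical point forces $c$ to be an algebraic integer with $|\sigma(c)|\le 2$ at every archimedean place, leaving an explicit finite list of Gaussian integers to inspect; or simply cite the reference as the paper does. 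With that step closed, and granting the Magma rank computations you correctly flag as the computational input, the proposal is complete.
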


\begin{proof}
Each of the graphs listed does occur over $\bbQ(i)$, as seen in Appendix~\ref{app:all_data}. Also, it follows from \cite[\textsection 5]{doyle/faber/krumm:2014} that 3(2), 5(1,1)a/b, 5(2)a, and 6(2,1) are the only graphs that are not strongly admissible but that may be realized as $G(f_c,K)$ for some $c \in K$. We henceforth assume that $G(f_c,K)$ is strongly admissible, and by Lemma~\ref{lem:structures} we may further assume that the cycle structure of $G(f_c,K)$ is (1,1), (2), (3), or (1,1,2). For such $c \in K$, it suffices to show that $G(f_c,K)$ is isomorphic to one of the following:
	\begin{center}
	0, 4(1,1), 4(2), 6(1,1), 6(2), 6(3), 8(2,1,1), 8(3), 10(2,1,1)a.
	\end{center}
By considering the system of graphs in Figure~\ref{fig:directed_system}, we must show the following:
	\begin{enumerate}
	\item The graph $G(f_c,K)$ does not contain a subgraph isomorphic to 8(1,1)a, 8(1,1)b, 8(2)a, 8(2)b, or 10(2,1,1)b.
	\item The graph $G(f_c,K)$ does not {\it properly} contain a subgraph isomorphic to 10(2,1,1)a.
	\item If $G(f_c,K)$ contains a subgraph isomorphic to 8(3), then $c = -29/16$ and $G(f_c,K) \cong$ 8(3).
	\end{enumerate}
	
Statement (A) holds by applying Proposition~\ref{prop:gen1} to each graph $G$ listed in part (A), since $X_1(G)$ has genus $1$ and $\rk X_1(G)(K) = 0$ for each such $G$.

For (B), we first note that the only 12-vertex strongly admissible graphs containing 10(2,1,1)a are 12(2,1,1)a, $G_4$, and $G_6$. The graph $G_6$ contains 10(2,1,1)b, so by (A) it cannot be a subgraph of $G(f_c,K)$. It remains to show, then, that $G(f_c,K)$ cannot contain 12(2,1,1)a or $G_4$. For 12(2,1,1)a, this follows from \cite[Cor. 3.36]{doyle/faber/krumm:2014}. It was shown in \cite[Prop. 5.10]{doyle:2018quad} that $X_1(G_4)$ has a model of the form
\[
	\left\{
	\begin{split}
		y^2 &= 2(x^3 + x^2 - x + 1)\\
		z^2 &= 5x^4 + 8x^3 + 6x^2 - 8x + 5,
	\end{split}
	\right.
\]
and that any finite quadratic point $(x,y,z)$ on $X_1(G_4)$ satisfies $x \in \bbQ$ and $y,z \notin \bbQ$. Therefore, a finite $K$-rational (but not $\bbQ$-rational) point on $X_1(G_4)$ yields a {\it rational} point on the twist
\begin{equation}\label{eq:twist-1}
	\left\{
	\begin{split}
		-y^2 &= 2(x^3 + x^2 - x + 1)\\
		-z^2 &= 5x^4 + 8x^3 + 6x^2 - 8x + 5.
	\end{split}
	\right.
\end{equation}
The curve $C$ defined by $-y^2 = 2(x^3 + x^2 - x + 1)$ is birational to the elliptic curve labeled 176B1 in \cite{cremona:1997}, which has a single rational point. Since $C$ has a rational point at infinity, $C$ has no finite rational points, hence there are no rational solutions to \eqref{eq:twist-1}. Therefore $G(f_c,K)$ cannot contain a graph isomorphic to $G_4$.

Finally, for (C), we note that for $G = {\rm8(3)}$ we have $\rk J_1(G)(K) = 1$, which means (by Proposition~\ref{prop:gen2}) that the only $c \in K$ with $G(f_c,K)$ containing 8(3) is $c = -29/16$, and a simple calculation verifies that in this case $G(f_c,K) \cong {\rm8(3)}$.
\end{proof}

\begin{prop}\label{prop:main_omega}
Let $K = \bbQ(\omega)$, and let $c \in K$. Suppose $f_c$ does not admit $K$-rational points of period greater than 5. Then $G(f_c,K)$ is isomorphic to one of the following thirteen graphs:
	\begin{center}
		\rm 0, 3(2), 4(1), 4(1,1), 4(2), 5(1,1)a, 6(1,1), 6(2), 6(3), 7(2,1,1)a, 8(2)a, 8(2,1,1), 8(3).
	\end{center}
\end{prop}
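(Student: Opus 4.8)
The plan is to mimic the proof of Proposition~\ref{prop:main_i} almost line for line, taking $K = \bbQ(\omega)$ and adjusting for the modified list of graphs already known to occur. First I would record that each of the thirteen graphs in the statement is realized as $G(f_c,\bbQ(\omega))$ for a suitable $c$ (see Appendix~\ref{app:all_data}), and that by \cite[\textsection 5]{doyle/faber/krumm:2014} the only graphs $G(f_c,K)$ with $c\in K$ that fail to be strongly admissible are ${\rm 3(2)}$, ${\rm 4(1)}$, ${\rm 5(1,1)a}$, and ${\rm 7(2,1,1)a}$; here ${\rm 4(1)}$ comes from the single exceptional parameter $c=1/4$ of Lemma~\ref{lem:admissible}, and the other three from the finitely many $c$ with $0\in\PrePer(f_c,K)$. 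One may thus assume $G(f_c,K)$ is strongly admissible, and Lemma~\ref{lem:structures} then forces its cycle structure to be ${\rm (1,1)}$, ${\rm (2)}$, ${\rm (3)}$, or ${\rm (1,1,2)}$. After these reductions it suffices to show that $G(f_c,K)$ is isomorphic to one of ${\rm 0}$, ${\rm 4(1,1)}$, ${\rm 4(2)}$, ${\rm 6(1,1)}$, ${\rm 6(2)}$, ${\rm 6(3)}$, ${\rm 8(2,1,1)}$, ${\rm 8(2)a}$, or ${\rm 8(3)}$.

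Next I would use the directed system of Figure~\ref{fig:directed_system} to turn this into finitely many forbidden-subgraph statements. Tracing the Hasse diagram, a strongly admissible graph with one of the four allowed cycle structures that is not one of the nine graphs above must contain one of ${\rm 8(1,1)a}$, ${\rm 8(1,1)b}$, ${\rm 8(2)b}$, ${\rm 10(2,1,1)a}$, ${\rm 10(2,1,1)b}$, or else properly contain ${\rm 8(2)a}$, or else contain ${\rm 8(3)}$ properly. The middle possibility is absorbed into the first: if $H$ is strongly admissible of cycle structure ${\rm (2)}$ with $H\supsetneq{\rm 8(2)a}$, then a leaf of ${\rm 8(2)a}$ whose in-degree doubles in $H$ gives rise to a ten-vertex strongly admissible subgraph of $H$ containing ${\rm 8(2)a}$, and the unique such graph is $G_2$, which itself contains ${\rm 8(2)b}$. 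So it remains to prove, for $K=\bbQ(\omega)$ and $c\in K$:
\begin{enumerate}
	\item $G(f_c,K)$ contains no subgraph isomorphic to ${\rm 8(1,1)a}$, ${\rm 8(1,1)b}$, ${\rm 8(2)b}$, ${\rm 10(2,1,1)a}$, or ${\rm 10(2,1,1)b}$; and
	\item if $G(f_c,K)$ contains a subgraph isomorphic to ${\rm 8(3)}$, then $c=-29/16$ and $G(f_c,K)\cong{\rm 8(3)}$.
\end{enumerate}

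Statement (A) I would handle exactly as in Proposition~\ref{prop:main_i}: each of the five curves $X_1(G)$ has genus $1$ by Proposition~\ref{prop:all_dyn_mod_curves}, so Proposition~\ref{prop:gen1} applies once $\rk X_1(G)(\bbQ(\omega))=0$ is checked with Magma's \texttt{RankBound} (equivalently, once the quadratic twist by $-3$ is seen to have rank $0$ over $\bbQ$). For the four graphs other than ${\rm 10(2,1,1)b}$ the emptiness of $U_1(G)(\bbQ(\omega))$ is then immediate from Proposition~\ref{prop:gen1}(A); for ${\rm 10(2,1,1)b}$ one uses the analysis in the proof of that proposition, whereby the extra $\bbQ(\omega)$-torsion of $X_1({\rm 10(2,1,1)b})$ corresponds to $c=0$ and $c=-3/4$, giving ${\rm 7(2,1,1)a}$ and ${\rm 6(1,1)}$ rather than ${\rm 10(2,1,1)b}$ (and, similarly, the extra $\bbQ(\omega)$-torsion of $X_1({\rm 8(1,1)a})$ comes from $c=1/4$, giving ${\rm 4(1)}$). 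For (B), $X_1({\rm 8(3)})=X_1((2,3))$ has genus $2$, $J_1({\rm 8(3)})(\bbQ)$ has rank $1$, and the $(-3)$-twist has rank $0$, so $\rk J_1({\rm 8(3)})(\bbQ(\omega))=1$ and Proposition~\ref{prop:gen2}(B) yields exactly (B). Combining (A), (B), and the reduction of the preceding paragraph completes the proof.

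I expect the only real work to be the handful of rank computations over $\bbQ(\omega)$ and the bookkeeping of the finitely many exceptional parameters ($c=0$, $1/4$, $-3/4$, $-29/16$) at which some $X_1(G)$ gains $\bbQ(\omega)$-points beyond those present over $\bbQ$; none of it is conceptually difficult, and---unlike the $\bbQ(i)$ case---no Chabauty argument or auxiliary twist curve (the $G_4$ argument of Proposition~\ref{prop:main_i}) is required, since the period-$5$ obstruction over $\bbQ(\omega)$ follows directly from Corollary~\ref{cor:5cycle} once the $(-3)$-twist of $X_0(5)$ is known to have rank $0$, and the maximality of ${\rm 8(2)a}$ is folded into (A) via $G_2\supset{\rm 8(2)b}$. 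The main point requiring care is to verify, as sketched above, that the directed system genuinely reduces the classification to (A) and (B)---in particular that every strongly admissible graph of cycle structure ${\rm (2)}$ properly containing ${\rm 8(2)a}$ contains $G_2$.
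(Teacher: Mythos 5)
Your proposal is correct and follows essentially the same line of argument as the paper's own proof: verify realizability from Appendix~B, invoke the non–strongly-admissible classification from~\cite{doyle/faber/krumm:2014}, apply Lemma~\ref{lem:structures} to restrict cycle structures, and then use the directed system of Figure~\ref{fig:directed_system} to reduce the classification to two forbidden-subgraph statements which follow from Propositions~\ref{prop:gen1} and~\ref{prop:gen2} via the rank computations over $\bbQ(\omega)$ (equivalently, over the $(-3)$-twists). The one place you do more than the paper explicitly does is the observation that any strongly admissible graph of cycle structure $(2)$ properly containing ${\rm 8(2)a}$ must contain $G_2$ (the unique ten-vertex graph covering ${\rm 8(2)a}$ in the Hasse diagram), and hence ${\rm 8(2)b}$; the paper leaves this reduction implicit in the phrase ``By considering Figure~\ref{fig:directed_system}, it remains to show $\ldots$,'' so making it explicit is a worthwhile clarification but not a different method. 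Everything else, including the role of ${\rm 8(3)}$ and the remark that no Chabauty-type argument is needed over $\bbQ(\omega)$, matches the paper.
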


\begin{proof}
Each of these thirteen graphs is realized over $\bbQ(\omega)$, as indicated in Appendix~\ref{app:all_data} (and Figure~\ref{fig:directed_system}). From \cite[\textsection 5]{doyle/faber/krumm:2014}, the only graphs $G(f_c,K)$ with $c \in K$ that are not strongly admissible are 3(2), 4(1), 5(1,1)a, and 7(2,1,1)a. Just as in the proof of Proposition~\ref{prop:main_i}, we need only consider those $c \in K$ such that $G(f_c,K)$ is strongly admissible with cycle structure (1,1), (2), (3), or (1,1,2). It suffices to show that for such $c \in K$ the graph $G(f_c,K)$ is isomorphic to one of the following:
	\begin{center}
	0, 4(1,1), 4(2), 6(1,1), 6(2), 6(3), 8(2)a, 8(2,1,1), 8(3).
	\end{center}
By considering Figure~\ref{fig:directed_system}, it remains to show the following:
	\begin{enumerate}
	\item The graph $G(f_c,K)$ does not contain 8(1,1)a, 8(1,1)b, 8(2)b, 10(2,1,1)a, or 10(2,1,1)b.
	\item If $G(f_c,K)$ contains a subgraph isomorphic to 8(3), then $c = -29/16$ and $G(f_c,K) \cong {\rm8(3)}$.
	\end{enumerate}
Part (A) follows from Proposition~\ref{prop:gen1}, since $X_1(G)$ has genus $1$ and $\rk X_1(G)(K) = 0$ for each of the graphs $G$ appearing in (A). Part (B) follows just as in the proof of Proposition~\ref{prop:main_i}, since the Jacobian of the curve associated to 8(3) also has rank $1$ over $K$.
\end{proof}

\appendix
\section{Determining the $\bbQ(i)$-rational points on $X_0(5)$}\label{app:X0(5)}
This appendix contains the calculations that prove that $X_0(5)(\bbQ(i)) = X_0(5)(\bbQ)$, as stated in the proof of Theorem~\ref{thm:5cycle_cyclotomic}. Our proof uses a variant of the usual Chabauty-Coleman machinery; we are grateful to Joseph Wetherell for describing this modification to us. We begin in \textsection \ref{sec:Chab} with a brief summary of the standard Chabauty-Coleman method, and in \textsection \ref{sec:ChabMod} we describe the modified version that we eventually apply in \textsection \ref{sec:ChabCalc} to the curve $X_0(5)$.

\subsection{An overview of the Chabauty-Coleman method}\label{sec:Chab}
We provide a brief description of the method of Chabauty and Coleman here. For further details, we refer the reader to \cite{wetherell:1997} and \cite{mccallum/poonen:2013}, as both provide an excellent introduction to the method.

Let $X$ be a smooth projective curve of genus $g \ge 1$ defined over a number field $K$, and let $J$ be its Jacobian. To ease the exposition, we assume that $X$ has a $K$-rational point $P_0$. We further make the crucial assumption that $r := \rk J(K) < g$. Let $\frakp \in \Spec \OK$ be a prime of good reduction for $X$, let $K_\frakp$ be the $\frakp$-adic completion of $K$, let $\calO_\frakp \subset K_\frakp$ be the ring of integers, and let $k_\frakp$ be the residue field of $K_\frakp$. The \textbf{residue disk} of a point $P \in X(K_\frakp)$ is the set
	\[
		\calU(P) := \{Q \in X(K_\frakp) : \tilde{Q} = \tilde{P}\},
	\]
where the tilde denotes reduction modulo $\frakp$. We let $\{P_0,\ldots,P_{m-1}\}$ be a full set of residue class representatives for $X(K_\frakp)$; that is, $P_0,\ldots,P_{m-1}$ lie in distinct residue disks, and $X(k_\frakp) = \{\tilde{P_0},\ldots,\tilde{P_{m-1}}\}$. Note that the reduction map $X(K_\frakp) \longto X(k_\frakp)$ is surjective by Hensel's lemma, since we have assumed $X$ is smooth over $k_\frakp$.

Since $X$ has genus $g$, the space $H^0(X, \Omega_X^1)$ of regular differentials on $X$ has dimension $g$; let $\omega_1,\ldots,\omega_g$ be a basis. The Albanese embedding
	\begin{align*}
		i_{P_0} : X &\longhookrightarrow J\\
		P &\longmapsto [P - P_0]
	\end{align*}
induces an isomorphism $H^0(X, \Omega_X^1) \cong H^0(J, \Omega_J^1)$, so we freely identify the two spaces.

The set of $K_\frakp$-rational points on $J$ forms a $\frakp$-adic Lie group, and one defines a {\it logarithm} map on $J(K_\frakp)$ by
	\begin{align*}
		\Log : J(K_\frakp) &\longto K_\frakp^g\\
			\calP &\longmapsto \left(\int_\calO^\calP \omega_1,\ldots,\int_\calO^\calP \omega_g\right),
	\end{align*}
which is locally an analytic isomorphism. Integration is defined in such a way that, for a degree-$0$ divisor $D = \sum_{j=1}^n Q_j - \sum_{j=1}^n Q_j'$, we have
	\[
		\int_\calO^{[D]} \omega = \int_{\sum_{j=1}^n Q_j'}^{\sum_{j=1}^n Q_j} \omega = \sum_{j=1}^n \int_{Q_j'}^{Q_j} \omega.
	\]
The closure $\overline{J(K)}$ of $J(K)$ in $J(K_\frakp)$ has dimension at most $r$, hence the same is true for $\Log(\overline{J(K)})$, which implies that
	\[
		\Ann(J(K)) := \left\{\omega : \int_\calO^\calP \omega = 0 \text{ for all } \calP \in J(K)\right\} \subset H^0(J_{K_\frakp}, \Omega_{J_{K_\frakp}^1)
}	\]
is a $K_\frakp$-vector subspace of dimension at least $g - r$. Since we have assumed that $r < g$, $\Ann(J(K))$ is nomempty; we call elements of $\Ann(J(K))$ {\bf annihilating differentials}.

We now turn to the matter of computing $\frakp$-adic integrals on $J(K_\frakp)$. By Riemann-Roch, every element of $J(K_\frakp)$ may be written in the form
	\[
		\calP = [Q_1 + \cdots + Q_g - gP_0],
	\]
where $Q_1 + \cdots + Q_g$ is a $\Gal(\overline{K_\frakp}/K_\frakp)$-invariant divisor on $X$. Thus,
	\[
		\int_\calO^\calP \omega = \sum_{j=1}^n \int_{P_0}^{Q_j} \omega,
	\]
so we need only determine integrals of the form $\int_{P_0}^Q \omega$ with $Q \in X(K_\frakp)$.

First, suppose $Q$ is in the residue disk $\calU(P_0)$. The disk $\calU(P_0)$ is analytically isomorphic to the maximal ideal of $\calO_\frakp$; abusing notation, we will also call this maximal ideal $\frakp$. Let $t$ be a uniformizing parameter at $P_0$, and write $Q_t$ for the family of points in $\calU(P_0)$ parametrized by $t \in \frakp$. At $P_0$, one expands each differential $\omega$ as a power series $\omega(t)\ dt$ centered at $t = 0$, and integrating yields
	\[
		\int_{P_0}^{Q_t} \omega_j = \lambda_j(t) := \int \omega_j(t)\ dt,
	\]
where the constant of integration is equal to $0$. The power series $\lambda_j(t)$ can be determined up to arbitrary $t$-adic and $\frakp$-adic precision, and it converges on the entire residue disk $\calU(P_0)$.

Now, suppose $Q$ is not in $\calU(P_0)$. Then $Q \in \calU(P_\ell)$ for some $1 \le \ell \le m - 1$, and we can write
	\[
		\int_{P_0}^Q \omega = \int_{P_0}^{P_\ell} \omega + \int_{P_\ell}^Q \omega.
	\]
The second integral on the right hand side can be evaluated using power series as in the previous paragraph, while the first integral generally requires more involved cohomological techniques; see, for example, the articles \cite{coleman:1982, balakrishnan/bradshaw/kedlaya:2010, balakrishnan:2013, balakrishnan:2015, balakrishnan/tuitman}. For this reason, it is preferable (when possible) to avoid computing integrals across distinct residue disks and compute only those integrals within a given residue disk; such integrals are typically called {\it tiny integrals}.

To determine a basis for $\Ann(J(K))$ (up to the desired $\frakp$-adic precision), one first finds elements $\calP_1,\ldots,\calP_r$ that span a finite index\footnote{Ideally, one determines a finite index subgroup $H \subseteq J(K)$ whose index is coprime to $p$, though by increasing the $\frakp$-adic precision of the integral calculations it suffices to have an upper bound for the $p$-adic valuation of the exponent of the quotient $J(K)/H$. For the calculation in \textsection \ref{sec:ChabCalc}, the index of our finite index subgroup will be coprime to $p = 2$, so we do not need to worry about this.} subgroup of $J(K)$. Since $\langle \calP_1,\ldots,\calP_r\rangle$ has finite index in $J(K)$, and since $\calP \longmapsto \int_\calO^\calP \omega$ is linear, we have that $\omega \in \Ann(J(K))$ if and only if
	\[
		\int_\calO^{\calP_j} \omega = 0 \quad\text{for all } 1 \le j \le r.
	\]
Thus, one computes $\int_\calO^{\calP_j} \omega_k$ for each $1 \le j \le r$ and $1 \le k \le g$, then calculates a basis for the kernel of the matrix whose $(j,k)$-entry is $\int_\calO^{\calP_j} \omega_k$. Applying a change of coordinates if necessary, we may suppose that $\{\omega_1,\ldots,\omega_s\}$ forms a basis for $\Ann(J(K))$; note that $s \ge g - r$.

Finally, for each residue disk $\calU(P_j)$ on $X(K_\frakp)$, one uses standard $\frakp$-adic techniques to bound the number of elements $Q \in \calU(P_j)$ such that $\int_{P_0}^Q \omega = 0$ for all $\omega \in \Ann(J(K))$. More precisely, for each $1 \le k \le s$ one can write $\int_{P_0}^Q\omega_k$ as a power series locally at $\calP_j$, then use Stra\ss mann's theorem to give an upper bound for the number of zeroes of each power series. One then hopes that information of the Stra\ss mann bounds is enough to conclude that the known $K$-rational points in the given residue disk are the only such points.

\subsection{A modification of the usual technique}\label{sec:ChabMod}
For the curve $X_0(5)$ over the field $\bbQ(i)$, we cannot use the Chabauty-Coleman technique exactly as described in the previous section because $J_0(5)(\bbQ(i))$ has rank $2$, equal to the genus of $X_0(5)$. However, a heuristic that Siksek \cite{siksek:2013} attributes to Wetherell---and which Wetherell has described to the author---suggests that a Chabauty-type method should typically give a bound on the number of $K$-rational points on a curve $X$ of genus $g$ under the weaker hypothesis that $J(K)$ has rank $r \le d(g - 1)$, where $d := [K:\bbQ]$. Though our calculations do not require explicitly working with them, the heuristic involves looking at the Weil restrictions $V := \Res_{K/\bbQ} X$ and $A := \Res_{K/\bbQ} J$, which are a $d$-dimensional variety and a $dg$-dimensional abelian variety, respectively. The key property that we take advantage of is that $K$-rational points on $X$ and $J$ correspond to $\bbQ$-rational points on $V$ and $A$, respectively. Moreover, $J(K)$ and $A(\bbQ)$ are isomorphic as groups, so in particular they have the same rank, hence the closure of $A(\bbQ)$ in $A(\bbQ_p)$ is still at most $r$-dimensional.

For concreteness (and because this is the case we ultimately require), we now fix the field $K = \bbQ(i)$, and we let $p$ be a prime such that $-1$ is not a square in $\bbQ_p$, so that $K_\frakp := \bbQ_p(i)$ is a nontrivial extension. In the standard Chabauty-Coleman method, one has a $g$-dimensional space of linear maps $J(\QQbar_p) \longto \QQbar_p$ given by integration of the regular differentials; using the Weil restriction $A$, we get a $2g$-dimensional space of linear maps $J(\bbQ_p(i)) \longto \bbQ_p$ by taking the ``real" and ``imaginary" parts of those integrals. In other words, if $\{\omega_1,\ldots,\omega_g\}$ is a basis for $H^1(J, \Omega_J^1)$, then
	\begin{equation}\label{eq:ReImIntegrals}
		\Re\left(\int_\calO^\bullet \omega_1\right),\ \Im\left(\int_\calO^\bullet \omega_1\right),\ \ldots,\ \Re\left(\int_\calO^\bullet \omega_g\right),\ \Im\left(\int_\calO^\bullet \omega_g\right)
	\end{equation}
are $2g$ independent linear maps $J(\bbQ_p(i)) \cong A(\bbQ_p) \longto \bbQ_p$. Here we abuse notation and write
	\[
		\Re(a + bi) = a \quad\text{and}\quad \Im(a + bi) = b
	\]
for $a,b \in \bbQ_p$, recalling that we have assumed $i \notin \bbQ_p$. Each of these linear maps can be written locally as a power series, but now in {\it two} variables, corresponding to the real and imaginary parts of the local parameter. (Alternatively, because the Weil restriction $V$ of $X$ is $2$-dimensional, we require a pair of parameters at a given point on $V$.)

If we have $r \le 2g - 2$, then also $\dim \overline{A(\bbQ)} \le r \le 2g - 2$, so the space $\Ann(A(\bbQ))$ of those maps \eqref{eq:ReImIntegrals} that vanish on $A(\bbQ) \cong J(K)$ has dimension at least $2$. Moreover, since $V(\bbQ_p)$ is $2$-dimensional, we expect (just by counting dimensions) that the set
	\[
		\{P \in V(\bbQ_p) \mid \lambda(P) = 0 \text{ for all } \lambda \in \Ann(A(\bbQ))\}
	\]
is finite. This is not always the case (see the remark on \cite[p. 768]{siksek:2013}), but when the set is finite, it remains to find a bound for the number of common zeroes of at least two power series in two variables. 
Under favorable conditions, this can be done using standard $p$-adic techniques, like Hensel's lemma. We now state a multivariate version of Hensel's lemma appearing in notes of Keith Conrad \cite[Theorem 3.8]{conrad:hensel_2020}.
	\begin{lem}\label{lem:hensel}
	Let $k$ be a complete, non-archimedean valued field with valuation $v$ and ring of integers $\calO_k$.
	Let $f_1,\ldots,f_m \in \calO_k[x_1,\ldots,x_n]$ with $m \le n$. Consider the \emph{truncated} Jacobian matrix
		\[
			J_{\bsf, m} := \left(\partial f_i/\partial x_j\right)_{1 \le i,j \le m}.
		\]
	Suppose $\bsa = (a_1,\ldots,a_n) \in \calO_k^n$ satisfies $v(f_j(\bsa)) > 2v(\det J_{\bsf, m}(\bsa))$ for all $1 \le j \le m$.
	Then there exist unique $b_1,\ldots,b_m \in \calO_k$ such that $f_j(b_1,\ldots,b_m,a_{m+1},\ldots,a_n) = 0$ for all $1 \le j \le m$ and $v(a_j - b_j) > v(J_{\bsf, m}(\bsa))$ for all $1 \le j \le m$.
	\end{lem}

\subsection{The calculation}\label{sec:ChabCalc}
Throughout this section, we let $K := \bbQ(i)$.
In Section \ref{sec:X0(5)} we gave the following model for $X_0(5)$, originally calculated in \cite{flynn/poonen/schaefer:1997}:
	\[
		y^2 = f(x) := x^6 + 8x^5 + 22x^4 + 22x^3 + 5x^2 + 6x + 1.
	\]
In this section, we prove the following.
\begin{thm}\label{thm:X0(5)i}
	\[
		X_0(5)(K) = X_0(5)(\bbQ) = \{(0,-1), (0,1), (-3,-1), (-3,1), \infty^+, \infty^-\}.
	\]
\end{thm}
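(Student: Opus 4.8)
The plan is to apply the modified Chabauty--Coleman method of \textsection\ref{sec:Chab}--\textsection\ref{sec:ChabMod} to $X := X_0(5)$ over $K = \bbQ(i)$. Although $\rk J_0(5)(K) = 2$ equals the genus $g = 2$, so that ordinary Chabauty does not apply, we are exactly in the regime $r \le 2g - 2$ in which the Weil-restriction heuristic predicts that the method still succeeds. First I would fix a prime $p$ that does not split in $\bbZ[i]$ and at which (a suitable integral model of) $X$ has good reduction; one can take $p = 2$, after replacing the sextic model by one with good reduction at $2$. Let $\frakp$ be the prime of $K$ above $p$, so that $K_\frakp = \bbQ_p(i)$ is a nontrivial extension of $\bbQ_p$ with residue field $\bbF_p$. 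I would use the basis $\omega_1 = dx/y$, $\omega_2 = x\,dx/y$ of $H^0(X,\Omega_X^1)$, the base point $P_0 = (0,1) \in X(\bbQ)$, and the Albanese map $Q \mapsto [Q - P_0]$.

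Next I would produce explicit generators of a finite-index subgroup of $J := J_0(5)$ over $K$ whose index is coprime to $p$. Since $J(K)_{\tors} = 0$ (established just above) and $\rk J(K) = 2$, it suffices to exhibit two independent points $\calP_1, \calP_2 \in J(K)$ generating an odd-index subgroup: one takes $\calP_1 \in J(\bbQ)$ a generator of the rank-$1$ group $J(\bbQ)$, and $\calP_2$ a point on which $\Gal(K/\bbQ)$ acts by $-1$ (equivalently, coming from the quadratic twist of $J$ by $-1$, which therefore has rank $1$ over $\bbQ$). This Galois-eigenvalue structure is what makes the computation tractable: because $\omega_1, \omega_2$ are defined over $\bbQ$ and $\Log$ is odd, $\int_\calO^{\calP_1}\omega_k \in \bbQ_p$ is ``real'' while $\int_\calO^{\calP_2}\omega_k \in i\bbQ_p$ is ``imaginary'' for $k = 1,2$. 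Hence the $2$-dimensional annihilator of $J(K) \cong (\Res_{K/\bbQ} J)(\bbQ)$ inside the $4$-dimensional space \eqref{eq:ReImIntegrals} is spanned by a ``real'' annihilating differential $\omega^\Re := \big(\int_\calO^{\calP_1}\omega_2\big)\omega_1 - \big(\int_\calO^{\calP_1}\omega_1\big)\omega_2$ paired with the functional $\Re\big(\int_{P_0}^{\bullet}(\,\cdot\,)\big)$, and an ``imaginary'' annihilating differential $\omega^\Im$ built analogously from $\calP_2$, paired with $\Im\big(\int_{P_0}^{\bullet}(\,\cdot\,)\big)$. Thus every $Q \in X(K)$ is a common zero on $X(K_\frakp)$ of the two locally analytic functions $\rho(Q) := \Re\!\big(\int_{P_0}^{Q}\omega^\Re\big)$ and $\sigma(Q) := \Im\!\big(\int_{P_0}^{Q}\omega^\Im\big)$.

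The heart of the argument is then local. I would partition $X(K_\frakp)$ into the residue disks indexed by $X(\bbF_p)$; in each, choose a uniformizer $t$ at a lift, parametrize the disk by $Q(t)$ with $t$ running over the maximal ideal of $\calO_{K_\frakp}$ (a set described by two $\bbZ_p$-parameters $(a,b)$, reflecting that the Weil restriction of $X$ is a surface), expand $\omega^\Re$ and $\omega^\Im$ as power series in $t$, and integrate termwise to obtain tiny integrals $\int_{P}^{Q(t)}\omega^\Re,\ \int_{P}^{Q(t)}\omega^\Im \in t\,\bbZ_p[i][[t]]$. Adding the inter-disk constants $\int_{P_0}^{P}\omega^\Re,\ \int_{P_0}^{P}\omega^\Im$ and taking real and imaginary parts yields two power series $H^\Re(a,b),\ H^\Im(a,b) \in \bbZ_p[[a,b]]$ whose common zeros in the disk contain all points of $X(K)$ there. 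In a disk containing one of the six known rational points I would anchor the integrals at that point, so that $H^\Re$ and $H^\Im$ both vanish at the origin, and then verify that the Jacobian $\partial(H^\Re, H^\Im)/\partial(a,b)$ is invertible over $\bbZ_p$ at the origin, so that multivariate Hensel forces the known point to be the unique $K$-point of the disk. In a disk with no known rational point, I would show that the constant term of $H^\Re$ or of $H^\Im$ is nonzero to sufficient $p$-adic precision while all higher-order terms have strictly larger valuation, ruling out any solution. Summing over the finitely many disks gives $X(K) = X(\bbQ)$, which is the asserted set of six points.

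The step I expect to be the main obstacle is guaranteeing, disk by disk, that the pair $(H^\Re, H^\Im)$ genuinely cuts out a zero-dimensional locus --- two power series in two variables could a priori share a one-dimensional component --- and that the relevant Jacobian is nondegenerate; where it degenerates one must fall back on a finer analysis using two-variable Newton polygons. The other delicate point is the evaluation of the inter-disk constants $\int_{P_0}^{P}\omega^\Re$ and $\int_{P_0}^{P}\omega^\Im$, which in general requires Coleman's cohomological integration rather than naive power-series expansion; in practice this is mitigated by choosing $P_0$ and the model so that as many disks as possible contain known rational points, reducing everything to tiny integrals.
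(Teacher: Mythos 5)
Your overall strategy matches the paper's (a Weil-restriction/Siksek-style Chabauty at the prime $p=2$, passing to a model of $X_0(5)$ with good reduction at $2$, expanding in two $\bbZ_2$-parameters, and applying multivariate Hensel disk by disk), and your observation that the integrals of $\omega_1,\omega_2$ against a point in the $+1$ (resp. $-1$) Galois-eigenspace lie in $\bbQ_p$ (resp. $i\bbQ_p$) is correct and is implicitly behind the shape of the annihilators the paper finds.

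However, there is a genuine gap at the step where you produce the finite-index subgroup. You assert that taking $\calP_1$ a generator of the rank-one group $J_0(5)(\bbQ)$ and $\calP_2$ a generator coming from the quadratic twist $J_0(5)^{(-1)}(\bbQ)$ gives a subgroup of $J_0(5)(K)$ of \emph{odd} index. This is false: the paper's Lemma~\ref{lem:indep_pts} shows that $J_0(5)(K)/\langle\calP_1,\calP_2\rangle$ is a nontrivial $2$-group --- there is a point $\calP_3 \in J_0(5)(K)$ with $2\calP_3 = \calP_1 + \calP_2$ and $\calP_3 \notin \langle\calP_1,\calP_2\rangle$, so the index is exactly $2$, which is precisely the prime you work at. Since $\calP_3$ is not a Galois eigenvector, your clean ``real''/``imaginary'' splitting of the annihilating functionals does not apply directly to a genuine generating set, and your mod-$2^n$ vanishing on $\langle\calP_1,\calP_2\rangle$ only yields vanishing mod $2^{n-1}$ on all of $J(K)$; without tracking this loss your precision claims are unjustified. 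Two further points where the proposal would need repair to match what actually happens: (i) the paper does not evaluate integrals at $\calP_1$ and the twist generator directly, but instead passes to the kernel of reduction $J^0(K)$ (generated by $\calE_1 = \calD_2$ and $\calE_2 = 19\calD_3 - 9\calD_2$, with $[J(K):J^0(K)] = 19$ coprime to $2$), so that all relevant integrals are tiny and no inter-disk Coleman integration is needed at all --- your ``mitigation'' via the base point does not achieve this; and (ii) your expectation that the Jacobian $\partial(H^{\Re},H^{\Im})/\partial(a,b)$ is invertible over $\bbZ_2$ at the known points fails: in the paper its determinant has $2$-adic valuation $2$, and one must invoke a quantitative multivariate Hensel statement (valuation of the function values strictly exceeding twice that of the Jacobian determinant) rather than unit-invertibility.
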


\subsubsection{Generators for $J_0(5)(K)$}\label{sec:J0(5)gens}
A two-descent in \cite[{\tt main.txt}]{doyle:cyclotomic} shows that the Jacobians of both $X_0(5)$ and its twist $X_0(5)^{(-1)}$, given by $y^2 = -f(x)$, have rank $1$ over $\bbQ$. Therefore, $J_0(5)(K)$ has rank $2$ (by Lemma~\ref{lem:rank_twist}), which precludes a standard Chabauty-Coleman procedure to determine $X_0(5)(K)$. We write $J_0(5)$ and $J_0(5)^{(-1)}$ for the Jacobians of $X_0(5)$ and $X_0(5)^{(-1)}$, respectively, and recall that we write $\{P,Q\}$ to represent the point $[P + Q - \infty^+ - \infty^-]$ on the Jacobian. We now give a basis for the Mordell-Weil group $J_0(5)(K)$.

\begin{lem}\label{lem:indep_pts}
Let $\alpha$ and $\alpha'$ be the two roots of $x^2 + 3x + 1/2$, and let $\beta$ and $\beta'$ be the two roots of $x^2 + 3x + (1 - i)$.
Consider the following points on $J_0(5)(K)$:
	\begin{align*}
		\calP_1 &= \{\infty^+, \infty^+\}\\
		\calP_2 &= \{(\alpha,\alpha i/2), (\alpha', \alpha' i/2)\}\\
		\calP_3 &= \{(\beta, -(1 + i)\beta - (2 + i)), (\beta', -(1 + i)\beta' - (2 + i))\}.
	\end{align*}
Then 
	\begin{enumerate}
	\item $2\calP_3 = \calP_1 + \calP_2$, and
	\item $J_0(5)(K) = \langle\calP_1, \calP_3\rangle = \langle\calP_2, \calP_3\rangle$.
	\end{enumerate}
\end{lem}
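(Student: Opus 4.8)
The plan is to prove (A) by a direct computation in $J_0(5)(K)$ and then to leverage (A) together with the action of $\Gal(K/\bbQ)$ to obtain (B). Write $\sigma$ for complex conjugation and let $J^+, J^- \subseteq J_0(5)(K)$ be the subgroups on which $\sigma$ acts by $+1$ and $-1$ respectively; then $J^+ = J_0(5)(\bbQ)$ and $J^- \cong J_0(5)^{(-1)}(\bbQ)$, so each has rank $1$ by the two-descents above, and $J_0(5)(K) \cong \bbZ^2$ since it is torsion-free (established above). For part (A), the degree-$0$ class $\calP_1 + \calP_2 - 2\calP_3$ is, after unwinding the notation $\{P,Q\} = [P + Q - \infty^+ - \infty^-]$, represented by an explicit divisor supported at $\infty^\pm$ and at the points of $X_0(5)$ with $x$-coordinates among $\alpha,\alpha',\beta,\beta'$; I would check that this divisor is principal, either by exhibiting a function $g \in K(X_0(5))$ with that divisor or, equivalently, by reducing the Mumford representations of $2\calP_3$ and of $\calP_1 + \calP_2$ via Cantor's algorithm and verifying that they agree --- a short Magma check either way. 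This is consistent with the Galois action: $\infty^\pm$ are $\bbQ$-rational, so $\sigma\calP_1 = \calP_1$; the roots of $x^2 + 3x + 1/2 \in \bbQ[x]$ are $\sigma$-fixed while $\sigma$ sends $y = (i/2)x$ to $y = -(i/2)x$, so $\sigma\calP_2 = -\calP_2$; hence (A) forces $\calP_3 + \sigma\calP_3 = \calP_1 \in J^+$ and $\calP_3 - \sigma\calP_3 = \calP_2 \in J^-$.

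The equality $\langle\calP_1,\calP_3\rangle = \langle\calP_2,\calP_3\rangle$ is then immediate from (A), since $\calP_2 = 2\calP_3 - \calP_1$ and $\calP_1 = 2\calP_3 - \calP_2$ show both groups equal $\langle\calP_1,\calP_2,\calP_3\rangle$. For $J_0(5)(K) = \langle\calP_1,\calP_3\rangle$: first, $\calP_1$ and $\calP_3$ are independent, because a relation $m\calP_1 + n\calP_3 = \calO$ yields, upon applying $\sigma$ and subtracting, $n\calP_2 = \calO$, hence $n = 0$ (as $\calP_2 \ne \calO$ --- its two support points have distinct $x$-coordinates, so are not hyperelliptic conjugates), and then $m = 0$ (as $\calP_1 \ne \calO$, since $\infty^+ \ne \infty^-$). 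Thus $H := \langle\calP_1,\calP_3\rangle$ has finite index in $J_0(5)(K)$. To show the index is $1$, I would verify that $\calP_1$ generates $J^+ = J_0(5)(\bbQ)$ and that $\calP_2$ generates $J^- \cong J_0(5)^{(-1)}(\bbQ)$ --- each a saturation statement for a rank-$1$ group over $\bbQ$, checked by reduction modulo a couple of primes (or read off from the descent data). Granting this, set $N := J^+ \oplus J^-$; since $2\calP = (\calP + \sigma\calP) + (\calP - \sigma\calP) \in N$ for all $\calP$, the index $[J_0(5)(K):N]$ is a power of $2$. By (A), $2\calP_3 \in N$ but $\calP_3 \notin N$ (otherwise $\calP_1 = \calP_3 + \sigma\calP_3 \in 2J^+$, contradicting that $\calP_1$ generates $J^+ \cong \bbZ$); and $J_0(5)(K)/N$ cannot be $(\bbZ/2)^2$, for then a generator of $J^+$ would be $2$-divisible in $J_0(5)(K)$ and hence --- applying $\sigma$ --- already in $2J_0(5)(\bbQ)$. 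Therefore $[J_0(5)(K):N] = 2$ with $\calP_3$ mapping to a generator of the quotient, so $J_0(5)(K) = N + \bbZ\calP_3 = \langle\calP_1,\calP_2,\calP_3\rangle = \langle\calP_1,\calP_3\rangle$.

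The only genuinely computational step --- and the main obstacle --- is verifying that $\calP_1$ and $\calP_2$ are \emph{not} divisible in their respective rank-$1$ Mordell--Weil groups over $\bbQ$ (equivalently, that $H$ is $p$-saturated for each prime $p$ dividing its index in $J_0(5)(K)$); this is a finite check by reduction modulo well-chosen primes, using that the kernel of reduction at a prime of residue characteristic $\ell$ is uniquely $p$-divisible for $p \ne \ell$. The divisor-class identity of (A) is also a (routine) computation, but everything after (A) and this saturation check is purely formal.
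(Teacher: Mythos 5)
Your proposal is correct and follows essentially the same strategy as the paper: verify (A) by direct divisor-class computation, check that $\calP_1$ and $\calP_2$ generate $J_0(5)(\bbQ)$ and $J_0(5)^{(-1)}(\bbQ)$ respectively (the paper does this via canonical heights rather than reduction mod primes, but both are standard), use the $2\calQ = (\calQ+\calQ^\tau)+(\calQ-\calQ^\tau)$ decomposition to see that $J_0(5)(K)/\langle\calP_1,\calP_2\rangle$ is a $2$-group, and then show $\calP_3$ fills the gap of index exactly $2$. The only difference is presentational: you phrase the last step in terms of the quotient $J_0(5)(K)/N$ being a quotient of $(\bbZ/2)^2$ and ruling out cases, while the paper exhibits an explicit formula expressing an arbitrary $\calQ \notin \langle\calP_1,\calP_2\rangle$ in terms of $\calP_1,\calP_2,\calP_3$; both come down to the same indivisibility facts for $\calP_1$ and $\calP_2$.
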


\begin{proof}[Proof of Lemma~\ref{lem:indep_pts}]
Part (A) is a straightforward calculation (see \cite[\tt KummerSurface.txt]{doyle:cyclotomic}), so we need only prove (B).

A computation in \cite[\tt Chabauty.txt]{doyle:cyclotomic} involving canonical heights on their respective Jacobians shows that $\calP_1 = \{\infty^+,\infty^+\}$ and $\{(\alpha,\alpha/2), (\alpha', \alpha'/2)\}$ are generators for the Mordell-Weil groups $J_0(5)(\bbQ)$ and $J_0(5)^{(-1)}(\bbQ)$, respectively. Note that $\calP_2$ is the point on $J_0(5)(K)$ corresponding to the point $\{(\alpha,\alpha/2), (\alpha', \alpha'/2)\}$ on the twist; more precisely, if $\phi: J_0(5)^{(-1)} \to J_0(5)$ is the isomorphism induced by the map
	\begin{align*}
		X_0(5)^{(-1)} &\longto X_0(5)\\
			(x,y) &\longmapsto (x, iy),
	\end{align*}
then $\calP_2 = \phi\big(\{(\alpha,\alpha/2), (\alpha', \alpha'/2)\}\big)$.

It follows from Lemma~\ref{lem:rank_twist} that the quotient $J_0(5) / \langle \calP_1,\calP_2\rangle$ has exponent at most $2$.
We now observe that $\calP_1$ and $\calP_2$ themselves cannot be doubles of points in $J_0(5)(K)$. Indeed, suppose there exists $\calQ \in J_0(5)(K)$ such that $2\calQ = \calP_1$. Then
	\[
		\tau(2\calQ) = \tau\calP_1 = \calP_1 = 2\calQ,
	\]
so $2(\calQ - \tau\calQ) = 0$. Since $J_0(5)(K)$ is torsion-free, this implies $\calQ = \tau\calQ$, hence $\calQ \in J_0(5)(\bbQ)$, contradicting the fact that $\calP_1$ generates $J_0(5)(\bbQ)$. The same argument shows that we could not have $2\calQ = \calP_2$.

Finally, we show that $\calP_2$ and $\calP_3$ generate $J_0(5)(K)$. It then follows immediately that $\calP_1$ and $\calP_3$ also form a generating set, since $\calP_2 = -\calP_1 + 2\calP_3$. Certainly $\langle \calP_1, \calP_2\rangle \subset \langle \calP_2,\calP_3\rangle$, so let $\calQ \in J_0(5)(K) \setminus \langle \calP_1,\calP_2\rangle$. As argued above, we can write $2\calQ = a\calP_1 + b\calP_2$ for some $a,b \in \bbZ$. We claim that
	\[
		\calQ = \left\lfloor\frac{a}{2}\right\rfloor\calP_1  + \left\lfloor\frac{b}{2}\right\rfloor\calP_2 + \calP_3,
	\]
from which it follows (since $\calP_1 \in \langle \calP_2,\calP_3\rangle$) that $\calQ \in \langle\calP_2,\calP_3\rangle$.
Set $\calQ' := \calQ - \left\lfloor\frac{a}{2}\right\rfloor\calP_1  - \left\lfloor\frac{b}{2}\right\rfloor\calP_2$; thus, our claim is that $\calQ' = \calP_3$.
By construction, we have
	\[
		2\calQ' \in \{\calO, \calP_1,\calP_2,\calP_1 + \calP_2\}.
	\]
Since $\calQ$ is not in $\langle\calP_1,\calP_2\rangle$, and since $J_0(5)(K)$ is torsion-free, we have $2\calQ' \ne \calO$; further, since $\calP_1$ and $\calP_2$ are not doubles in $J_0(5)(K)$, it must be that $2\calQ' = \calP_1 + \calP_2 = 2\calP_3$. Once again applying the fact that $J_0(5)(K)$ is torsion-free, we conclude that $\calQ' = \calP_3$, as claimed.
\end{proof}

\subsubsection{A model with good reduction at $2$}
Let $\frakp := (1 + i)$ be the unique prime ideal in $\OK$ lying over $2$, and let $K_\frakp := \bbQ_2(i)$ be the completion of $K$ at $\frakp$. We will abuse notation and let $\frakp$ also denote the maximal ideal in the ring of integers $\calO_\frakp$ of $K_\frakp$.

The model of $X_0(5)$ given by $y^2 = f(x)$ above has bad reduction at the prime $\frakp$, but there exists a model with good reduction at $\frakp$. Let
	\begin{align*}
		g(x) &:= 2x^5 + 5x^4 + 5x^3 + x^2 + x,\\
		h(x) &:= -(x^3 + x + 1),
	\end{align*}
let $X$ be the curve defined by $y^2 + h(x)y = g(x)$, and let $J$ be the Jacobian of $X$. Then $X$ has good reduction at $\frakp$, and we have an isomorphism
	\begin{align}
		X_0(5) &\longto X \notag\\
		(x,y) &\longmapsto \left(x, \frac{1}{2}(y - h(x))\right) \label{eq:X0(5)isom}\\
		(x, 2y + h(x)) &\longmapsfrom (x,y).\notag
	\end{align}
Theorem~\ref{thm:X0(5)i} is therefore equivalent to the following:
\begin{thm}\label{thm:X05_change_of_variables}
	\[
		X(K) = X(\bbQ) = \{(0,0), (0,1), (-3,-15), (-3,-14), \infty^+, \infty^-\}.
	\] 
\end{thm}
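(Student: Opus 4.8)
The plan is to run the modified Chabauty--Coleman argument outlined in \textsection\ref{sec:ChabMod} at the prime $\frakp = (1+i)$, at which the model $y^2 + h(x)y = g(x)$ has good reduction and $K_\frakp = \bbQ_2(i)$ is a ramified quadratic extension of $\bbQ_2$ with residue field $\bbF_2$. By Lemma~\ref{lem:indep_pts}, $J(K) = J_0(5)(K)$ is torsion-free of rank $2$ with explicit generators $\calP_1$ and $\calP_3$. Since $X$ has genus $2$, the rank equals the genus, so the classical method does not apply and I use the Weil restriction: fixing a basis $\omega_1 = dx/(2y+h(x))$, $\omega_2 = x\,dx/(2y+h(x))$ of regular differentials on $X$, the four maps $\Re\int_\calO^\bullet\omega_k$ and $\Im\int_\calO^\bullet\omega_k$ ($k=1,2$) are independent $\bbQ_2$-linear functionals on $J(K_\frakp)\cong A(\bbQ_2)$, as in \eqref{eq:ReImIntegrals}; the key input is that $J(K)\cong A(\bbQ)$ has rank $2$, so the closure of $J(K)$ in $J(K_\frakp)$ is at most $2$-dimensional.

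First I would compute the $2$-adic integrals $\int_\calO^{\calP_1}\omega_k$ and $\int_\calO^{\calP_3}\omega_k$ to adequate $2$-adic precision. Writing each generator $\calP_j = \{R_j, R_j'\}$ as in Lemma~\ref{lem:indep_pts}, each such integral breaks into integrals of the form $\int_{\infty^+}^{R}\omega_k$; choosing base points within residue disks reduces everything to tiny integrals, which are evaluated by expanding $\omega_k$ as a power series in a local parameter and integrating termwise. Taking real and imaginary parts and imposing vanishing at $\calP_1$ and $\calP_3$ yields a $\bbQ_2$-linear system of rank at most $2$, so the annihilator of $J(K)$ inside the $4$-dimensional space of functionals has dimension at least $4-2=2$; from it I would extract two explicit independent annihilating functionals $\eta_1$ and $\eta_2$, each an explicit $\bbQ_2$-combination of $\Re\int\omega_1,\Im\int\omega_1,\Re\int\omega_2,\Im\int\omega_2$.

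Next I would enumerate the residue disks, i.e. the points of $X(\bbF_2)$ (the reduction of $y^2+h(x)y=g(x)$ modulo $2$), and record which disks contain the six known points $(0,0),(0,1),(-3,-15),(-3,-14),\infty^+,\infty^-$ of $X(\bbQ)$. In a disk containing a $\bbQ$-rational point $P_0$, I pick a local parameter $t$ at $P_0$ and parametrize the $K_\frakp$-points of the disk by $t = u + vi$ with $u,v$ ranging over a $2$-adic neighbourhood of $0$ in $\bbZ_2$; if $Q$ is a $K$-point of the disk then $[Q-P_0]\in J(K)$, so expanding $\eta_1$ and $\eta_2$ and separating real and imaginary parts produces (at least) two power series in the two variables $u,v$ over $\bbQ_2$ that vanish at every such $Q$. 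For a disk containing no known $\bbQ$-point I argue similarly, expanding around any chosen $K_\frakp$-point of the disk and using that $\eta_j$ annihilates the (computable) image of that point in $J(K)$. I would then bound the common zeros of this overdetermined $2\times2$ system disk by disk, using Hensel's lemma when the Jacobian of the leading terms is a $2$-adic unit and a two-variable Strassmann/Newton-polygon estimate otherwise, to conclude that each residue disk contributes exactly the known $K$-rational points it contains (one point, or a hyperelliptic-conjugate pair, where it meets $X(\bbQ)$, and none elsewhere). Summing over all residue disks gives $X(K) = X(\bbQ)$, which is the assertion of Theorem~\ref{thm:X05_change_of_variables}.

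The main obstacle is this final local analysis: one must verify that the $2$-adic valuations of the leading coefficients of the bivariate power series coming from $\eta_1,\eta_2$ are favorable in every residue disk, so that Hensel's lemma or the Strassmann-type count is sharp and no disk admits a spurious extra zero; this is the step that could force a higher-precision integral computation, a different choice of annihilating functionals, or recourse to another auxiliary prime. A secondary concern is bookkeeping: ensuring the computed $2$-adic integrals have enough precision to pin down $\eta_1,\eta_2$ unambiguously, and handling the disks at infinity and any disk lying over a Weierstrass point of the reduced curve with the appropriate local coordinate.
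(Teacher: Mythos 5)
Your overall strategy is the same as the paper's: work at $\frakp = (1+i)$ on the good-reduction model $y^2 + h(x)y = g(x)$ with base point $\bsP_0 = (0,0)$; invoke the Wetherell heuristic via Weil restriction of scalars so that the four functionals $\Re\Lambda_k, \Im\Lambda_k$ ($k = 1,2$) on $J(K_\frakp) \cong A(\bbQ_2)$ yield a two-dimensional annihilator of $J(K)$; then do a residue-disk by residue-disk analysis with bivariate $2$-adic power series and Hensel's lemma. However, there is a genuine gap in the step where you compute the annihilating functionals. You assert that ``choosing base points within residue disks reduces everything to tiny integrals,'' but that is not true for the generators $\calP_1, \calP_3$ (equivalently $\calD_1, \calD_3$). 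Their divisor representatives $[P + Q - 2\bsP_0]$ have $P$, $Q$ supported on residue disks different from that of $\bsP_0$, and inserting intermediate $\bbQ$-rational base points still leaves cross-disk constants such as $\int_{\bsP_0}^{\bsP_k}\omega$ to be evaluated; those are genuine Coleman integrals across residue disks and cannot be computed by power-series expansion alone. Expressing $[\bsP_k - \bsP_0]$ as a $\bbZ$-multiple of $\calD_1$ only pushes the problem around, since $\calD_1 = [\infty^+ + \infty^- - 2\bsP_0]$ has its support spread over three disks. The paper circumvents this by passing to the kernel of reduction $J^0(K) \subset J(K)$: because $\bsP_0$ does not reduce to a Weierstrass point, every element of $J^0(K)$ can be written $[P + Q - 2\bsP_0]$ with $P, Q$ in the residue disk of $\bsP_0$, so all the required integrals really are tiny, and since $[J(K):J^0(K)] = |J(\bbF_2)| = 19$ is coprime to $p = 2$, annihilating $J^0(K)$ to a given $2$-adic precision is equivalent to annihilating $J(K)$. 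Concretely, one replaces $\calD_1, \calD_3$ by the explicit generators $\calE_1 = \calD_2$ and $\calE_2 = 19\calD_3 - 9\calD_2$ of $J^0(K)$. This step is essential to making the tiny-integral computation valid and is missing from your proposal.

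Two smaller points you should also address. First, in the disk of $\bsP_0$ the system $\mu_1 = \mu_2 = 0$ turns out to have exactly two $\bbZ_2$-solutions, only one of which is $\bsP_0$ itself; to rule out an extra $K$-point you must observe that a strictly quadratic $K$-point would occur together with its $\Gal(K/\bbQ)$-conjugate (both in the same residue disk, since $\frakp$ is Galois-stable), producing three zeros and contradicting the count. Second, the disk of $\bsP_1 = \iota\bsP_0$ follows immediately from the $\bsP_0$ case via the hyperelliptic involution and does not require its own power-series computation. These are refinements rather than gaps, but they complete the argument that each residue disk (each of which contains exactly one $\bbQ$-rational point, since $\#X(\bbF_2) = 6$) contributes exactly one $K$-point.
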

For convenience, we will let $\bsP_0, \ldots, \bsP_5$ be the six rational points on $X(K)$, ordered as in the statement of Theorem~\ref{thm:X05_change_of_variables}. In particular, $\bsP_0 = (0,0)$ will be our chosen base point.

Since $K/\bbQ$ is ramified over $2$, the local field $K_\frakp$ is (totally) ramified over $\bbQ_2$, thus the residue field of $K_\frakp$ is still $\bbF_2$. A simple calculation shows that
	\[
		X(\bbF_2) = \{(0,0), (0,1), (1,1), (1,0), \infty^+,\infty^-\},
	\]
so each mod-$2$ residue class on $X$ contains exactly one $\bbQ$-rational point. Thus, it suffices to show that each of these six residue classes contains exactly one $K$-rational point.

Applying the isomorphism $J_0(5) \to J$ induced by \eqref{eq:X0(5)isom}, we get the following version of Lemma~\ref{lem:indep_pts} on $J(K)$:

\begin{cor}\label{cor:indep_pts}
Let $\alpha$ and $\alpha'$ be the two roots of $x^2 + 3x + 1/2$, and let $\beta$ and $\beta'$ be the two roots of $x^2 + 2x + (1 - i)$.
Consider the following points on $J(K)$:
	\begin{align*}
	\calD_1 &:= \{\infty^+, \infty^+\},\\
	\calD_2 &:= \left\{\left(\alpha, \frac{(i + 19)\alpha + 5}{4}\right), \left(\alpha', \frac{(i + 19)\alpha' + 5}{4}\right)\right\},\\
	\calD_3 &:= \left\{\left(\beta, 4\beta + (1 - 2i)\right), \left(\beta', 4\beta' + (1 - 2i)\right)\right\}.
	\end{align*}
Then 
	\begin{enumerate}
	\item $2\calD_3 = \calD_1 + \calD_2$, and
	\item $J(K) = \langle\calD_1, \calD_3\rangle = \langle\calD_2, \calD_3\rangle$.
	\end{enumerate}
\end{cor}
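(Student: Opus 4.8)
The plan is to deduce this corollary from Lemma~\ref{lem:indep_pts} by transporting it across the change of variables \eqref{eq:X0(5)isom}. First I would record that the morphism $\phi\colon X_0(5)\to X$, $(x,y)\mapsto(x,\tfrac12(y-h(x)))$, is an isomorphism of curves defined over $\bbQ$: one checks the polynomial identity $f = 4g + h^2$, so that completing the square turns $y^2 + h(x)y = g(x)$ into $Y^2 = g(x) + \tfrac14 h(x)^2$ with $Y := y + \tfrac12 h(x)$, and in these coordinates $\phi$ is simply $(x,y)\mapsto(x,\tfrac12 y)$, manifestly an isomorphism with the inverse displayed in \eqref{eq:X0(5)isom}. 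Since $\phi$ is an isomorphism of smooth projective curves over $\bbQ$, it induces an isomorphism of Jacobians $\phi_*\colon J_0(5)\overset{\sim}{\longrightarrow} J$ defined over $\bbQ$; in particular $\phi_*$ commutes with $\Gal(\Kbar/\bbQ)$ and restricts to a group isomorphism $J_0(5)(K)\overset{\sim}{\longrightarrow} J(K)$ preserving all group-theoretic relations.

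Next I would verify that $\calD_i = \phi_*\calP_i$ for $i=1,2,3$, where $\calP_1,\calP_2,\calP_3$ are the points of Lemma~\ref{lem:indep_pts}. Granting this, part (A) of the corollary follows by applying the homomorphism $\phi_*$ to the relation $2\calP_3 = \calP_1 + \calP_2$, and part (B) follows because a group isomorphism carries a generating set of $J_0(5)(K)$ to a generating set of $J(K)$. The identification $\calD_i = \phi_*\calP_i$ is a direct computation: $\phi$ fixes $x$-coordinates, sends $y\mapsto\tfrac12(y-h(x))$, and (reading off the completed-square model) sends the two points at infinity of $X_0(5)$ to the two points at infinity of $X$, so it carries the divisor representing $\calP_1$ to the one representing $\calD_1$. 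For $\calP_2$ and $\calP_3$ one substitutes the relevant quadratic relation for the $x$-coordinate --- $\alpha^2 = -3\alpha - \tfrac12$, respectively the minimal polynomial of $\beta$ --- and reduces $h(\alpha)$, respectively $h(\beta)$, modulo it; since $\phi$ is defined over $\bbQ$ it commutes with the Galois action, so it sends the Galois-stable effective degree-$2$ divisor representing $\calP_2$ (resp.\ $\calP_3$) to the one representing $\calD_2$ (resp.\ $\calD_3$). These are routine number-field computations, also carried out in the accompanying Magma file.

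I do not expect any genuine obstacle: the argument is transport of structure together with explicit computation. The one point that calls for care --- and the only place where it is easy to slip --- is the bookkeeping of two identifications on the model $y^2 + h(x)y = g(x)$: that its two points at infinity are labeled $\infty^+,\infty^-$ compatibly with $\phi$ and with the convention $\{P,Q\} = [P + Q - \infty^+ - \infty^-]$, so that $\phi_*\{P,Q\} = \{\phi(P),\phi(Q)\}$; and that $\phi$ applied to the reduced representatives appearing in Lemma~\ref{lem:indep_pts} again yields reduced divisors (it does, since $\phi$ is an isomorphism of curves fixing $x$-coordinates, hence cannot turn a pair of non-hyperelliptic-conjugate points into a conjugate pair). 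Once these are pinned down, the corollary follows formally from Lemma~\ref{lem:indep_pts}.
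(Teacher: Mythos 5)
Your approach is exactly the one the paper itself uses: Corollary~\ref{cor:indep_pts} is introduced with the phrase ``applying the isomorphism $J_0(5)\to J$ induced by \eqref{eq:X0(5)isom}, we get the following version of Lemma~\ref{lem:indep_pts},'' so transport-of-structure by $\phi_*$ is indeed the intended argument, and the $\calD_2$ computation you sketch works out ($h(\alpha)=-(\tfrac{19}{2}\alpha+\tfrac52)$ using $\alpha^2=-3\alpha-\tfrac12$, so $\tfrac12(\alpha i/2 - h(\alpha)) = ((i+19)\alpha+5)/4$).

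However, the key claim on which your argument rests, namely that $\calD_j=\phi_*\calP_j$ for $j=1,2,3$ as \emph{literally written in the statement}, does not hold and you should not assert it without comment. Two of the three identifications fail outright. For $j=1$: Lemma~\ref{lem:indep_pts} has $\calP_1=\{\infty^+,\infty^+\}$, while the corollary has $\calD_1=\{\infty^+,\infty^-\}$; but under the paper's own convention $\{P,Q\}=[P+Q-\infty^+-\infty^-]$, the latter is $\calO$, and since $\phi$ is a bijection on points at infinity, $\phi_*\{\infty^+,\infty^+\}$ is $\{\infty^+,\infty^+\}$ or $\{\infty^-,\infty^-\}$, never $\{\infty^+,\infty^-\}$. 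For $j=3$: in the lemma $\beta$ is a root of $x^2+3x+(1-i)$, whereas in the corollary $\beta$ is a root of $x^2+2x+(1-i)$; since $\phi$ fixes $x$-coordinates (a fact you yourself invoke in your final paragraph), $\phi_*\calP_3$ must be supported on $x$-coordinates satisfying the \emph{first} polynomial, so it cannot have the reduced representative attributed to $\calD_3$. In both cases these are presumably misprints in the statement of the corollary, but a correct proof-by-transport must either reconcile them (e.g.\ show that the displayed $\calD_3$ is indeed the same divisor class as $\phi_*\calP_3$ via an explicit linear equivalence, which the ``unique reduced representative'' language makes impossible unless the $x$-data agree) or flag them as typographical errors and work with $\phi_*\calP_1 = \{\infty^+,\infty^+\}$ and the correct quadratic for $\beta$. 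Your write-up asserts the identifications as a ``direct computation'' without noticing that two of them fail, which is precisely the point that calls for care.
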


\subsubsection{The kernel of reduction}

We will ultimately need to find two independent linear functions on $J(\bbQ_2(i))$ (linear combinations of those appearing in \eqref{eq:ReImIntegrals}) that vanish on $J(K)$. For a point $\calP = [\sum Q_i - \sum P_i]$ on $J$, it is convenient to integrate from $\calO$ to $\calP$ by writing
	\[
		\int_\calO^\calP \omega = \int_\calO^{\sum [Q_i - P_i]} \omega = \sum \int_{P_i}^{Q_i} \omega;
	\]
as mentioned in \textsection \ref{sec:Chab}, such integrals are straightforward to compute using power series when $P_i$ and $Q_i$ are in the same residue disk. Since we have chosen our base point to be $\bsP_0 = (0, 0)$, and since every element of $J(\QQbar)$ may be represented in the form $[P + Q - 2\bsP_0]$ by Riemann-Roch, we seek to describe the points $\calD = [P + Q - 2\bsP_0]$ on $J(K)$ such that $P$ and $Q$ lie in the same residue disk as $\bsP_0$. Since $\bsP_0$ does not reduce to a Weierstrass point over $\bbF_2$, this is equivalent to $\calD$ being in the kernel---which we denote $J^0(K)$---of the mod-$\frakp$ reduction map.

We now write each of $\calD_1$, $\calD_2$, and $\calD_3$ in the form $\calD_j = [P_j' + Q_j' - 2\bsP_0]$. If we let
	\begin{align*}
		g_1'(x) &= x^2 + 4x + \frac{1}{3},\\
		g_2'(x) &= x^2 + \frac{742 - 44i}{325}x - \frac{512 + 66i}{325},\text{ and}\\
		g_3'(x) &= x^2 + \frac{13}{5}x + \frac{3 - 7i}{5},
	\end{align*}
then for each $j \in \{1,2,3\}$, the $x$-coordinates of $P_j'$ and $Q_j'$ are the roots of $g_j'$.

For $j \in \{1,3\}$, at least one of the roots of $g_j'$ reduces to $1$ in $\bbF_2$; in particular, at least one of $P_j'$ or $Q_j'$ does not reduce to $\bsP_0$ modulo $\frakp$, so neither $\calD_1$ nor $\calD_3$ is in the kernel of reduction. On the other hand, both roots of $g_2'$ vanish modulo $\frakp$; moreover, the $y$-coordinates of $P_2'$ and $Q_2'$ are determined from the $x$-coordinates by
	\[
		y = \frac{27181 - 692i}{8125}x - \frac{22116 + 3638i}{8125},
	\]
so both of the $y$-coordinates also vanish modulo $\frakp$. Therefore, $\calD_2$ is in the kernel of reduction.

Since $|J(\bbF_2)| = 19$, it follows that $19\calD_1$, $\calD_2$, and $19\calD_3$ are all elements of $J^0(K)$. We set
	\begin{align*}
	\calE_1 &:= \calD_2,\\
	\calE_2 &:= 19\calD_3 - 9\calD_2.
	\end{align*}
(We remark that we chose $\calE_2 = 19\calD_3 - 9\calD_2$ rather than just $19\calD_3$ essentially to make the corresponding expression in \eqref{eq:Emin_polys} much more compact.) Then $J^0(K)$ is generated by $\calE_1$ and $\calE_2$, and $[J(K) : J^0(K)] = |J(\bbF_2)| = 19$. Finally, for each $j \in \{1,2\}$, we write $\calE_j = [P_j + Q_j - 2\bsP_0]$, where the $x$-coordinates of $P_j$ and $Q_j$ are the roots of the following polynomials:
	\begin{equation}\label{eq:Emin_polys}
		\begin{split}
			\calE_1:&\ x^2 + \frac{742 - 44i}{325}x - \frac{-512 + 66i}{325}\\
			\\
			\calE_2:&\ x^2 + \frac{7801337 - 1823949i}{2442505}x + \frac{948975 + 120593i}{488501}.
		\end{split}
	\end{equation}

\subsubsection{Annihilating differentials for $J(K)$}
We now determine a space of linear maps $J(\bbQ_2(i)) \longto \bbQ_2$ that vanish on $J(K)$. It suffices to find maps that vanish on the finite index subgroup $J^0(K) \subset J(K)$, so we want linear maps that vanish at $\calE_1$ and $\calE_2$.

For points $P$ in the residue class of $\bsP_0$, we can write the integral $P \longmapsto \int_{\bsP_0}^P \omega_j$ as a power series in a uniformizing parameter $t$ at $\bsP_0$. Since $\bsP_0$ has $x$-coordinate $0$ and is not a Weierstrass point, we take $t = x$.

The space of linear differentials on $X$ is spanned by
	\[
		\omega_1 := \frac{dx}{2y + h(x)} = \frac{dx}{-\sqrt{f(x)}} \quad\text{and}\quad \omega_2 := x\omega_1,
	\]
where we take the negative square root because at $\bsP_0$ we have $x = y = 0$, $f(0) = 1$, and $h(0) = -1$. We note that $\omega_1$ and $\omega_2$ are obtained from the standard basis $\{dx/y, xdx/y\}$ on $X_0(5)$ via the isomorphism \eqref{eq:X0(5)isom}.

We denote by $\Lambda_1$ and $\Lambda_2$ the maps $J(\QQbar_2) \longto \QQbar_2$ given by
	\[
		\Lambda_j(\calP) = \int_\calO^\calP \omega_j.
	\]
We now represent $\Lambda_1$ and $\Lambda_2$ locally near $\bsP_0$ as power series in $t = x$ by first determining the Taylor series of $\omega_1$ and $\omega_2$ centered at $t = 0$; this calculation was done in Sage \cite{sage_cyclotomicquad} and appears in \cite[\tt Chabauty.txt]{doyle:cyclotomic}:
	\begin{align*}
		\omega_1(t)
			&=\ \scriptstyle
			    -1 +
			    3t 
			    -11t^2 +
			    56t^3 
			    -283t^4 +
			    1438t^5 
			    -7506t^6 +
			    39723t^7 
			    -211939t^8 +
			    1139043t^9 
			    -6157964t^{10} +
			    33448053t^{11} 
			    -182389282t^{12} \\
			    &\quad\quad \scriptstyle +
			    997848854t^{13} 
			    -5474673325t^{14} +
			    30110184065t^{15}
			    -165957302527t^{16} +
			    916424740644t^{17}
			    -5069007570927t^{18} \\
			    &\quad\quad \scriptstyle +
			    28080034612882t^{19}
			    -155759823221656t^{20} +
			    865048247560705t^{21}
			    -4809544720320519t^{22} +
			    26767288658743629t^{23}\\
			    &\quad\quad \scriptstyle
			    -149109354289320238t^{24} +
			    831329586241569831t^{25}
			    -4638535883774463494t^{26} +
			    25900170663332468144t^{27}\\
			    &\quad\quad \scriptstyle
			    -144715739340500871241t^{28} +
			    809096110462736894221t^{29}
			    -4526238826848117522585t^{30}\\ 
			    &\quad\quad {\scriptstyle +
			    25334445278892249580026t^{31}} + O(t^{32}), \\
		\omega_2(t)
			&=\ \scriptstyle 
			    -t +
			    3t^2 
			    -11t^3 +
			    56t^4 
			    -283t^5 +
			    1438t^6 
			    -7506t^7 +
			    39723t^8 
			    -211939t^9 +
			    1139043t^{10}
			    -6157964t^{11} +
			    33448053t^{12}\\
			    &\quad\quad \scriptstyle
			    -182389282t^{13} +
			    997848854t^{14}
			    -5474673325t^{15} +
			    30110184065t^{16} 
			    -165957302527t^{17} +
			    916424740644t^{18} \\
			    &\quad\quad \scriptstyle
			    -5069007570927t^{19} +
			    28080034612882t^{20} 
			    -155759823221656t^{21} +
			    865048247560705t^{22} 
			    -4809544720320519t^{23}\\
			    &\quad\quad \scriptstyle +
			    26767288658743629t^{24} 
			    -149109354289320238t^{25} +
			    831329586241569831t^{26} 
			    -4638535883774463494t^{27} \\
			    &\quad\quad \scriptstyle +
			    25900170663332468144t^{28} 
			    -144715739340500871241t^{29} +
			    809096110462736894221t^{30} \\
			    &\quad\quad {\scriptstyle
			    -4526238826848117522585t^{31}} + O(t^{32}),
	\end{align*}
where $O(t^n)$ indicates that the remaining terms are divisible by $t^n$. Integrating, we have
	\begin{align*}
	\lambda_1(t) := \int_0^t \omega_1 &= -t + \tfrac{3}{2}t^2 - 
	\cdots - \tfrac{4526238826848117522585}{31}t^{31} +
	    \tfrac{12667222639446124790013}{16}t^{32} + O(t^{33}),\\
	\lambda_2(t) := \int_0^t \omega_2 &= -\tfrac{1}{2}t^2 + t^3 -
	\cdots + \tfrac{809096110462736894221}{31}t^{31}
	    -\tfrac{4526238826848117522585}{32}t^{32} + O(t^{33}).
	\end{align*}
	
We now explain why we require the degree-$32$ approximations to $\lambda_1$ and $\lambda_2$. First, as we will see later, it will suffice for us to perform all of our calculations modulo $2^6$. Now, if $P \in X(\Kbar_\frakp)$ is in the same residue disk as $\bsP_0$, then $v(t(P)) \ge 1$, where $v$ is the valuation on $K_\frakp(P)$ normalized so that $v(K_\frakp(P)^\times) = \bbZ$. Because we are only evaluating the integrals at elements $\calD \in J(K_\frakp)$, we are only considering divisor classes $[P + Q - 2\bsP_0]$ with $P$ and $Q$ at worst {\it quadratic} over $K_\frakp$, hence at worst {\it quartic} over $\bbQ_2$. Thus the valuation $v$ defined on $K_\frakp(P)$ will satisfy $v(2) \le 4$. Because we will ultimately be working modulo $2^6$, we may discard those terms of the power series which are guaranteed to vanish modulo $2^6$ on the residue disk $\calU(\bsP_0)$, which means discarding those terms with $v$-adic valuation at least $24$.

Since $\omega_1$ and $\omega_2$ have $\bbZ_2$-coefficients, the degree $k$ term of $\lambda_j(t(P))$ has valuation at least
	\[
		kv(t(P)) - v(k) \ge k - v(k) \ge k - 4\ord_2(k) \ge k - 4\log_2(k),
	\]
where $\ord_2$ indicates the $2$-adic valuation on $\bbQ$ with $\ord_2(2) = 1$.
The fact that $\lambda_1$ and $\lambda_2$ converge on the residue disk $\calU(\bsP_0)$ follows from the fact that this quantity increases with $k$; moreover, it is a calculus exercise to show that $k - 4\log_2(k) \ge 24$ for all $k \ge 47$, and then an explicit calculation shows that $k - 4\ord_2(k) \ge 24$ for all $33 \le k \le 46$. Since $32 - 4\ord_2(32) = 12 < 24$, we require precisely the degree $32$ approximation of our power series.

Now, we need to evaluate the integrals of $\omega_1$ and $\omega_2$ on the kernel of reduction $J^0(K)$. Since the points $\calE_j = [P_j + Q_j - 2\bsP_0]$ with $j \in \{1,2\}$ generate $J^0(K)$, it suffices to evaluate the integrals at those two points. By construction, each $P_j$ and $Q_j$ is in the same residue class as $\bsP_0$, so we evaluate using the power series $\lambda_1$ and $\lambda_2$ as follows: Writing
	\[
		\lambda_\ell(t) = \sum_{k=0}^\infty \lambda_{\ell,k}t^k,
	\]
for $\ell \in \{1,2\}$, we have
	\begin{align*}
		\Lambda_\ell(\calE_j)
			&= \int_\calO^{\calE_j} \omega_\ell\\
			&= \int_{\bsP_0}^{P_j} \omega_\ell + \int_{\bsP_0}^{Q_j} \omega_\ell\\
			&= \lambda_\ell(t(P_j)) + \lambda_\ell(t(Q_j))\\
			&= \sum_{k=0}^\infty \lambda_{\ell,k}\left(t(P_j)^k + t(Q_j)^k\right).
	\end{align*}
Since $t = x$ on the residue disk containing $\bsP_0$, the coefficients in the last sum are symmetric in the $x$-coordinates of $P_j$ and $Q_j$, so they can be written explicitly in terms of the coefficients of the corresponding polynomials in \eqref{eq:Emin_polys}. Computing in this way, we find the following values:
	\begin{alignat*}{3}
	\Lambda_1(\calE_1) &\equiv 30i &&\mod 2^6\\
	\Lambda_2(\calE_1) &\equiv 42i &&\mod 2^6\\
	\Lambda_1(\calE_2) &\equiv 17 + 47i &&\mod 2^6\\
	\Lambda_2(\calE_2) &\equiv 50 + 53i &&\mod 2^6
	\end{alignat*}
Thus, separating into ``real" and ``imaginary" parts we have
	\begin{alignat*}{8}
	\Re\Lambda_1(\calE_1) & \equiv 0 && \mod 2^6 &\qquad\qquad&
	\Re\Lambda_1(\calE_2) &\equiv 17 &&\mod 2^6\\
	\Im\Lambda_1(\calE_1) &\equiv 30 &&\mod 2^6
	& &	\Im\Lambda_1(\calE_2) &\equiv 47 &&\mod 2^6\\
	\Re\Lambda_2(\calE_1) &\equiv 0 &&\mod 2^6
	& & \Re\Lambda_2(\calE_2) &\equiv 50 &&\mod 2^6\\
	\Im\Lambda_2(\calE_1) &\equiv 42 &&\mod 2^6
	& & \Im\Lambda_2(\calE_2) &\equiv 53 &&\mod 2^6.
	\end{alignat*}
	
Thus, if $\Mu = c_1\Re\Lambda_1 + c_2\Im\Lambda_1 + c_3\Re\Lambda_2 + c_4\Im\Lambda_2$ is a map that vanishes at both $\calE_1$ and $\calE_2$, with $\bbZ_2$-coefficients $c_1$, $c_2$, $c_3$, and $c_4$, then we have
	\begin{align*}
		30c_2 + 42c_4 &\equiv 0 \mod 2^6\\
		17c_1 + 47c_2 + 50c_3 + 53c_4 &\equiv 0 \mod 2^6.
	\end{align*}
Since both coefficients of the first congruence are (exactly) divisible by $2$, one naturally obtains congruences modulo $2^5$:
	\begin{equation}\label{eq:cong}
		\begin{split}
		15c_2 + 21c_4 &\equiv 0 \mod 2^5\\
		17c_1 + 15c_2 + 18c_3 + 21c_4 &\equiv 0 \mod 2^5.
		\end{split}
	\end{equation}
A bit of linear algebra shows that the space of solutions (mod $2^5$) to \eqref{eq:cong} is spanned by
	\[
		(c_1, c_2, c_3, c_4) \in \{(2, 0, 7, 0), (0, 1, 0, 13)\}.
	\]
Moreover, the Jacobian matrix associated to the system \eqref{eq:cong} is
	\[
		\begin{pmatrix}
		0 & 15 & 0 & 21\\
		17 & 15 & 18 & 21,
		\end{pmatrix}
	\]
and the matrix obtained by taking just the first two columns has determinant with valuation $0$, so Hensel's lemma (Lemma~\ref{lem:hensel}) implies that each solution modulo $2^5$ lifts to a solution $(c_1,c_2,c_3,c_4) \in \bbZ_2^4$ to the system
	\begin{align*}
	c_1\Re\Lambda_1(\calE_1) + c_2\Im\Lambda_1(\calE_1) + c_3\Re\Lambda_2(\calE_1) + c_4\Im\Lambda_2(\calE_1) &= 0\\
	c_1\Re\Lambda_1(\calE_2) + c_2\Im\Lambda_1(\calE_2) + c_3\Re\Lambda_2(\calE_2) + c_4\Im\Lambda_2(\calE_2) &= 0.
	\end{align*}

Since we have two solutions which are independent modulo $2^5$, their lifts are linearly independent as well. Therefore, the space of linear maps vanishing at both $\calE_1$ and $\calE_2$---hence vanishing on all of $J(K)$---is spanned by maps $\Mu_1$ and $\Mu_2$ satisfying
	\begin{alignat*}{3}
		\Mu_1 &\equiv 2\Re\Lambda_1 + 7\Re\Lambda_2 &&\mod 2^5\\
		\Mu_2 &\equiv \Im\Lambda_1 + 13\Im\Lambda_2 &&\mod 2^5.
	\end{alignat*}
Note that we are now using the fact that the index $[J(K) : J^0(K)] = 19$ is coprime to $2$, since this implies that a linear map vanishes modulo $2^n$ on $J^0(K) = \langle \calE_1, \calE_2\rangle$ if and only if it vanishes modulo $2^n$ on all of $J(K)$.

\subsubsection{Residue class computations}
As mentioned immediately following Theorem~\ref{thm:X05_change_of_variables}, it suffices to show that each rational point $\bsP_k$ ($0 \le k \le 5$) is the unique element of $X(K)$ in its residue class. For any $P \in X(K)$, we will have
	\begin{align*}
		\Mu_1([P - \bsP_0]) &= \left(2 + O(2^5)\right)\Re\int_{\bsP_0}^P \omega_1 + \left(7 + O(2^5)\right)\Re\int_{\bsP_0}^P \omega_2 = 0,\\
		\Mu_2([P - \bsP_0]) &= \left(1 + O(2^5)\right)\Im\int_{\bsP_0}^P \omega_1 + \left(13 + O(2^5)\right)\Im\int_{\bsP_0}^P \omega_2 = 0.
	\end{align*}
As discussed previously, it is convenient to work with tiny integrals so that we need only evaluate integrals using power series on residue disks. This is straightforward when $P$ is in the same residue disk as $\bsP_0$, while it is generally more difficult to integrate across different residue disks. Fortunately, in our case, every residue disk contains a rational point, so for any $P \in X(K_\frakp)$ there exists a rational point $\bsP_k$ in the same residue disk as $P$. Using linearity of $\Mu_1$ and $\Mu_2$ yields
	\[
		\Mu_\ell([P - \bsP_0]) = \Mu_\ell([P - \bsP_k]) + \Mu_\ell([\bsP_k - \bsP_0]) = \Mu_\ell([P - \bsP_k])
	\]
for $\ell \in \{1,2\}$, where the final equality follows from the fact that $[\bsP_k - \bsP_0] \in J(K)$ and that $\Mu_1$ and $\Mu_2$ vanish on $J(K)$. Therefore, for each $0 \le k \le 5$, it suffices to bound the number of points $P \in \calU(\bsP_k)$ such that $\Mu_1$ and $\Mu_2$ both vanish at $[P - \bsP_k]$.
We do this explicitly for the residue disks containing $\bsP_0$ and $\bsP_1$. For the remaining disks, the calculations are essentially the same and are therefore omitted; the file {\tt Chabauty.txt}, uploaded as an ancillary file with this article's arXiv submission \cite{doyle:cyclotomic}, contains the necessary Magma computations.\\

\noindent\fbox{$\boldsymbol{\bsP_0 = (0, 0)}$}
We have already computed $\Lambda_1$ and $\Lambda_2$ as power series $\lambda_j(t)$ on the residue disk of $X(K)$ containing $\bsP_0$. Note that we only want zeroes $t = t_0$ which vanish modulo $\frakp$; thus, to separate the ``real" and ``imaginary" parts, we write $t = (1 + i)(T + Ui)$, where $T$ and $U$ are variables now taking values in $\bbZ_2$. Evaluating $\lambda_1$ and $\lambda_2$ at $t = (1 + i)(T + Ui)$ gives two bivariate power series, which we also write as $\lambda_1$ and $\lambda_2$, and which satisfy (modulo $2^5$)
	\begin{align*}
	\lambda_1(T,U)
		&\equiv (\scriptstyle
		16T^{16} + 16T^{12} + 16T^9 + 16T^8U^4 + 16T^8U + 22T^8 + 16T^7 +
		    24T^6U^2 + 16T^6U + 16T^5U^2 + 16T^5U + 28T^5 +
		    16T^4U^8\\
			&\quad\quad\scriptstyle + 4T^4U^4 + 16T^4U^3 + 20T^4U + 8T^4 +
		    16T^3U^4 + 8T^3U^2 + 18T^3 + 24T^2U^6 + 16T^2U^5 +
		    24T^2U^3 + 16T^2U^2 + 22T^2U\\
		    &\quad\quad\scriptstyle + 16TU^8 + 16TU^6 +
		    16TU^5 + 12TU^4 + 10TU^2 + 26TU + 31T + 16U^{16} +
		    16U^{12} + 16U^9 + 22U^8 + 16U^7 + 4U^5\\
		    &\quad\quad{\scriptstyle + 8U^4 + 14U^3 + U})\\
		    &\phantom{\equiv}\ {\scriptstyle + 
		i}(\scriptstyle16T^{10} + 16T^9 + 16T^8U^2 + 16T^8U + 16T^7U + 16T^7 +
		    16T^6U + 24T^6 + 16T^5U^3 + 16T^5U^2 + 28T^5 + 16T^4U^3
		    + 24T^4U^2\\
		    &\quad\quad\scriptstyle + 12T^4U + 16T^3U^5 + 16T^3U^4 + 8T^3U^2 +
		    14T^3 + 16T^2U^8 + 16T^2U^5 + 8T^2U^4 + 8T^2U^3 +
		    22T^2U + 3T^2 + 16TU^8\\
		    &\quad\quad{\scriptstyle + 16TU^7 + 16TU^6 + 12TU^4 +
		    22TU^2 + 31T + 16U^{10} + 16U^9 + 16U^7 + 8U^6 + 28U^5 +
		    14U^3 + 29U^2 + 31U}),\\
	\lambda_2(T,U)
		&\equiv (\scriptstyle
		16T^{16} + 16T^9 + 16T^8U + 28T^8 + 16T^7 + 16T^6U^2 + 16T^6U
		    + 16T^5U^2 + 8T^5U + 8T^4U^4 + 16T^4U^3 + 11T^4 +
		    16T^3U^4\\
		    &\quad\quad\scriptstyle + 16T^3U^3 + 30T^3 + 16T^2U^6 + 16T^2U^5 +
		    30T^2U^2 + 26T^2U + 16TU^8 + 16TU^6 + 8TU^5 + 6TU^2
		    + 2TU + 16U^{16} + 16U^9\\
		    &\quad\quad{\scriptstyle + 28U^8 + 16U^7 + 11U^4 + 2U^3})\\
		    &\phantom{\equiv}\ {\scriptstyle +
		i}(\scriptstyle16T^{10} + 16T^9 + 16T^8U^2 + 16T^8U + 16T^7 + 16T^6U + 4T^6 +
		    16T^5U^2 + 16T^4U^3 + 4T^4U^2 + 16T^3U^4 + 12T^3U +
		    2T^3\\
		    &\quad\quad\scriptstyle + 16T^2U^8 + 16T^2U^5 + 28T^2U^4 + 26T^2U + 31T^2 +
		    16TU^8 + 16TU^6 + 20TU^3 + 26TU^2 + 16U^{10} + 16U^9 +
		    16U^7 + 28U^6\\
		    &\quad\quad{\scriptstyle + 2U^3 + U^2}).
	\end{align*}
Letting $\mu_1$ and $\mu_2$ be the power series representing $\Mu_1$ and $\Mu_2$, respectively, on the residue disk containing $\bsP_0$, we have (again, modulo $2^5$)
	\begin{align*}
	\mu_1(T,U)
		&\equiv \scriptstyle\
		16T^{16} + 16T^9 + 16T^8U + 16T^8 + 16T^7 + 16T^6U + 16T^5U^2
		    + 24T^5U + 24T^5 + 16T^4U^3 + 8T^4U + 29T^4 + 16T^3U^4\\
			&\quad\quad\scriptstyle +
		    16T^3U^3 + 16T^3U^2 + 22T^3 + 16T^2U^5 + 16T^2U^3 +
		    18T^2U^2 + 2T^2U + 16TU^8 + 16TU^6 + 24TU^5 + 24TU^4
		    + 30TU^2\\
		    &\quad\quad\scriptstyle + 2TU + 30T + 16U^16 + 16U^9 + 16U^8 + 16U^7 +
		    8U^5 + 29U^4 + 10U^3 + 2U,\\
	\mu_2(T,U)
		&\equiv \scriptstyle\
		16T^7U + 12T^6 + 16T^5U^3 + 28T^5 + 12T^4U^2 + 12T^4U +
		    16T^3U^5 + 8T^3U^2 + 28T^3U + 8T^3 + 20T^2U^4 +
		    8T^2U^3 + 8T^2U\\
		    &\quad\quad\scriptstyle + 22T^2 + 16TU^7 + 12TU^4 + 4TU^3 +
		    8TU^2 + 31T + 20U^6 + 28U^5 + 8U^3 + 10U^2 + 31U.
	\end{align*}

By construction, $(T, U) = (0, 0)$, which corresponds to the rational point $\bsP_0$, is a $\bbZ_2$-solution to the system $\mu_1(T,U) = \mu_2(T,U) = 0$. We now claim that there is at exactly one additional simultaneous zero $(T_0, U_0) \in \bbZ_2^2$ of $\mu_1$ and $\mu_2$. Before proving this statement, we explain why it then follows that $\bsP_0$ is the only $K$-rational point in its residue disk. Indeed, it was shown by Flynn, Poonen, and Schaefer \cite{flynn/poonen/schaefer:1997} that $\bsP_0$ is the only $\bbQ$-rational point in its residue disk, so if $P$ were another $K$-rational point in the residue disk, it must be a strictly quadratic point. Since $\frakp$ is totally ramified, $\Gal(K/\bbQ)$ preserves $\frakp$, hence the $\Gal(K/\bbQ)$-conjugate of $P$ is a third point in the residue disk of $\bsP_0$, contradicting the fact that there are only two simultaneous zeroes of $\mu_1$ and $\mu_2$. 

Now, let $(T_0, U_0) \in \bbZ_2^2$ be a solution of $\mu_1 = \mu_2 = 0$. By enumerating all solutions to the congruence $\mu_1(T,U) \equiv \mu_2(T,U) \equiv 0 \mod 2^5$, we find that
	\[
		(T_0, U_0) \equiv (8k + 6k', 32 - 8k - 6k') \mod 2^5
	\]
for some integers $0 \le k \le 3$ and $k' \in \{0,1\}$. In particular, a solution must reduce to $(0, 0)$ or $(6, 2)$ modulo $2^3$. It now suffices to show that for each of these two mod-$2^3$ solutions there is a {\it unique} $\bbZ_2$-solution.

Let $\bsJ$ be the Jacobian matrix of $\mu_1$ and $\mu_2$. Then we have
	\begin{align*}
		\bsJ(0,0) &\equiv 
			\begin{pmatrix}
			6 & 2\\
			7 & 7
			\end{pmatrix}
		\mod 2^3,\\
		\bsJ(6, 26) \equiv \bsJ(6,2) &\equiv
			\begin{pmatrix}
			2 & 6\\
			7 & 7
			\end{pmatrix}
		\mod 2^3.
	\end{align*}
Thus $\ord_2\left(\det\bsJ(0,0)\right) = \ord_2\left(\det\bsJ(6, 26)\right) = \ord_2(\pm 28) = 2$. Combining this with the fact that $\ord_2(\mu_\ell(0,0)) \ge 5$ and $\ord_2(\mu_\ell(6,26)) \ge 5$ for $\ell \in \{1,2\}$, the multivariate version of Hensel's lemma (Lemma~\ref{lem:hensel}) implies that there is a unique $\bbZ_2$-solution to $\mu_1 = \mu_2 = 0$ congruent to $(0, 0)$ (resp., $(6, 26)$) modulo $2^3$. On the other hand, we have already checked that any common solution must reduce to $(0,0)$ or $(6, 26)$ modulo $2^3$, so these two must be the only solutions.\\

\noindent\fbox{$\boldsymbol{P_1 = (0,1)}$}
It is now a quick proof to show that $\bsP_1$ is the only $K$-rational point in its residue disk. Let $P$ be such a point. Then the hyperelliptic involution $\iota$ maps $P$ to the residue disk containing $\iota \bsP_1 = \bsP_0$. But we know that $\bsP_0$ is the only $K$-rational point in its residue class, so actually $\iota P = \bsP_0$, and therefore $P = \iota \bsP_0 = \bsP_1$.\\

\noindent\fbox{$\boldsymbol{P_2 = (-3, -15),\ P_3 = (-3,-14),\ P_4 = \infty^+,\ \text{and}\ P_5 = \infty^-}$}
As stated above, the calculations for these four points are nearly identical to those for $\bsP_0$ and $\bsP_1$, so we omit them here. The only difference is that for each of the points $\bsP_2$ and $\bsP_4$ we need to replace the uniformizing parameter $t = x$ with $t = x + 3$ and $t = 1/x$, respectively, and then rewrite the power series $\lambda_1$ and $\lambda_2$ (hence also $\mu_1$ and $\mu_2$) accordingly. The calculations, which appear in \cite[\tt Chabauty.txt]{doyle:cyclotomic}, show that $\bsP_2$ and $\bsP_4$ (hence also $\bsP_3$ and $\bsP_5$) are the unique $K$-rational points in their residue disks, concluding the proof of Theorem~\ref{thm:X05_change_of_variables}, hence also Theorem~\ref{thm:X0(5)i}.

\section{Preperiodic graphs}\label{app:all_data}

We include here all directed graphs that are required for the results of this paper. The graphs are separated into three tables: Table~\ref{tab:graphs_true} contains those graphs that are realized as $G(f_c,K)$ for some cyclotomic quadratic field $K$ and parameter $c \in K$. For each such graph $G$, we also include a representative value of $c$ for which $G(f_c,K) \cong G$, and we include a set $\{P_1,\ldots,P_n\}$ of preperiodic points such that $\PrePer(f_c,K) = \{\pm P_1,\ldots,\pm P_n\}$. A dagger ($\dagger$) indicates that the graph is only realized over $\bbQ(i)$, and a double dagger ($\ddagger$) indicates that it is only realized over $\bbQ(\omega)$. If there is no adornment on a graph $G$, then that graph is realized over both fields, and for the given (rational) representative $c$ we have $G(f_c,\bbQ(i)) = G(f_c,\bbQ(\omega)) \cong G$. Tables~\ref{tab:graphs_false_jxd} and \ref{tab:graphs_false_j} contain graphs that are not realized as $G(f_c,K)$ for a cyclotomic quadratic field $K$ and $c \in K$, but that nonetheless play a role in the proof of the main theorem.

Graphs are labeled by one of two conventions. If a graph was found in the search carried out in \cite{doyle/faber/krumm:2014}, then it is labeled as in that article. Such graphs appear in Tables~\ref{tab:graphs_true} and \ref{tab:graphs_false_jxd}, and their labels are of the form $N(\ell_1,\ell_2,\ldots)$, where $N$ denotes the number of vertices in the graph and $\ell_1,\ell_2,\ldots$ are the lengths of the directed cycles in the graph in nonincreasing order. If more than one isomorphism class of graphs with this data was observed, we add a lowercase Roman letter to distinguish them. The remaining graphs (i.e., those required by our arguments but not found in \cite{doyle/faber/krumm:2014}) appear in Table~\ref{tab:graphs_false_j} and are labeled $G_i$ for $1 \le i \le 6$ as in \cite{doyle:2018quad}.

Finally, as mentioned in Remark~\ref{rem:infinity}, we always omit the connected component of the graph corresponding to the fixed point at infinity.

\begin{table}[h]
\caption{Graphs realized over $\bbQ(i)$ or $\bbQ(\omega)$}
\label{tab:graphs_true}
\begin{tabularx}{\textwidth}{|L|L|L|} \hline
{\large\bf 0} \hfill $c = 2$ & {\large\bf3(2)} \hfill $c = -1$ & {\large\bf4(1)$\ddagger$} \hfill $c = 1/4$
\end{tabularx} \offinterlineskip
\begin{tabularx}{\textwidth}{|H|H|H|}
  & \pic{graph3_2} & \pic{graph4_1}
\end{tabularx} \offinterlineskip
\begin{tabularx}{\textwidth}{|L|L|L|}
$\emptyset$ & $\{0, 1\}$ & $\{1/2, 1/2 + \omega\}$ \\\hline
\end{tabularx} \offinterlineskip
\begin{tabularx}{\textwidth}{|L|L|L|}
{\large\bf4(1,1)} \hfill $c = -6$ & {\large\bf4(2)} \hfill $c = -3$ & {\large\bf5(1,1)a} \hfill $c = -2$
\end{tabularx} \offinterlineskip
\begin{tabularx}{\textwidth}{|H|H|H|}
\pic{graph4_11} & \pic{graph4_2} & \pic{graph5_11a}
\end{tabularx} \offinterlineskip
\begin{tabularx}{\textwidth}{|L|L|L|}
$\{2,3\}$ & $\{1,2\}$ & $\{0,1,2\}$ \\\hline
\end{tabularx} \offinterlineskip
\begin{tabularx}{\textwidth}{|L|L|L|}
{\large\bf5(1,1)b$\dagger$} \hfill $c = 0$&{\large\bf 5(2)a$\dagger$} \hfill $c = i$& {\large\bf6(1,1)} \hfill $c = -10/9$ \\
\end{tabularx} \offinterlineskip
\begin{tabularx}{\textwidth}{|H|H|H|}
\pic{graph5_11b} & \pic{graph5_2a} & \pic{graph6_11}
\end{tabularx} \offinterlineskip
\begin{tabularx}{\textwidth}{|L|L|L|}
$\{0,1,i\}$ & $\{0,i,-1 + i\}$ & $\{2/3,4/3,5/3\}$ \\\hline
\end{tabularx} \offinterlineskip
\begin{tabularx}{\textwidth}{|L|L|L|}
{\large\bf6(2)} \hfill $c = -13/9$ & {\large\bf6(2,1)$\dagger$} \hfill $c = 1/4$& {\large\bf6(3)} \hfill $c = -301/144$
\end{tabularx} \offinterlineskip
\begin{tabularx}{\textwidth}{|H|H|H|}
\pic{graph6_2} & \pic{graph6_21} & \pic{graph6_3}
\end{tabularx} \offinterlineskip
\begin{tabularx}{\textwidth}{|L|L|L|}
$\{1/3,4/3,5/3\}$ & $\{1/2, -1/2 + i, 1/2 + i\}$ & $\{5/12, 19/12, 23/12\}$ \\\hline
\end{tabularx} \offinterlineskip
\begin{tabularx}{\textwidth}{|L|L|L|}
{\large\bf7(2,1,1)a$\ddagger$} \hfill $c = 0$ & {\large\bf8(2)a$\ddagger$} \hfill $c = -5/12$ & {\large\bf8(2,1,1)} \hfill $c = -21/16$
\end{tabularx} \offinterlineskip
\begin{tabularx}{\textwidth}{|H|H|H|}
\pic{graph7_211a} & \pic{graph8_2a} & \pic{graph8_211}
\end{tabularx} \offinterlineskip
\begin{tabularx}{\textwidth}{|L|L|L|}
 & $\{1/6 - 2\omega/3, 5/6 + 2\omega/3,$ & 
\end{tabularx} \offinterlineskip
\begin{tabularx}{\textwidth}{|L|L|L|}
$\{0,1,\omega,1 + \omega\}$ & \hfill $5/6 - \omega/3, 7/6 + \omega/3\}$ & $\{1/4,3/4,5/4,7/4\}$ \\\hline
\end{tabularx} \offinterlineskip
\begin{tabularx}{\textwidth}{|M|M|}
{\large\bf8(3)} \hfill $c = -29/16$ & {\large\bf10(2,1,1)a$\dagger$} \hfill $c = -1/4 + 3i/8$
\end{tabularx} \offinterlineskip
\begin{tabularx}{\textwidth}{|W|W|}
\pic{graph8_3} & \pic{graph10_211a}
\end{tabularx} \offinterlineskip
\begin{tabularx}{\textwidth}{|M|M|}
 & $\{1/4 - i/4, 1/4 + 3i/4, 3/4 + i/4,$ \\
\end{tabularx} \offinterlineskip
\begin{tabularx}{\textwidth}{|M|M|}
$\{1/4,3/4,5/4,7/4\}$ & \hfill $3/4 - 3i/4, 5/4 - i/4\}$ \\\hline
\end{tabularx}\offinterlineskip
\end{table}

\newpage

\begin{table}[h]
\caption{Additional graphs from \cite{doyle/faber/krumm:2014}}
\label{tab:graphs_false_jxd}
\begin{tabularx}{\textwidth}{|L|L|L|}
\hline
{\large\bf8(1,1)a} & {\large\bf8(1,1)b} & {\large\bf8(2)b}
\end{tabularx} \offinterlineskip
\begin{tabularx}{\textwidth}{|H|H|H|}
\pic{graph8_11a} & \pic{graph8_11b} & \pic{graph8_2b} \\ \hline
\end{tabularx} \offinterlineskip
\begin{tabularx}{\textwidth}{|L|L|L|}
{\large\bf8(4)} & {\large\bf10(1,1)a} & {\large\bf10(1,1)b}
\end{tabularx} \offinterlineskip
\begin{tabularx}{\textwidth}{|H|H|H|}
\pic{graph8_4} & \pic{graph10_11a} & \pic{graph10_11b} \\ \hline
\end{tabularx} \offinterlineskip
\begin{tabularx}{\textwidth}{|M|M|}
{\large\bf10(2)} & {\large\bf10(2,1,1)b}
\end{tabularx} \offinterlineskip
\begin{tabularx}{\textwidth}{|W|W|}
\pic{graph10_2} & \pic{graph10_211b} \\ \hline
\end{tabularx} \offinterlineskip
\begin{tabularx}{\textwidth}{|M|M|}
{\large\bf10(3)a} & {\large\bf10(3)b}
\end{tabularx} \offinterlineskip
\begin{tabularx}{\textwidth}{|W|W|}
\pic{graph10_3a} & \pic{graph10_3b} \\ \hline
\end{tabularx} \offinterlineskip
\begin{tabularx}{\textwidth}{|M|M|}
{\large\bf10(3,1,1)} & {\large\bf10(3,2)}
\end{tabularx} \offinterlineskip
\begin{tabularx}{\textwidth}{|W|W|}
\includegraphics[scale=.63]{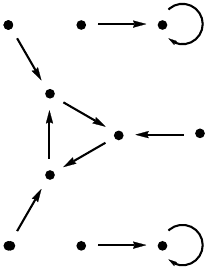} & \pic{graph10_32} \\ \hline
\end{tabularx} \offinterlineskip
\begin{tabularx}{\textwidth}{|M|M|}
{\large\bf12(2,1,1)a} & {\large\bf12(2,1,1)b}
\end{tabularx} \offinterlineskip
\begin{tabularx}{\textwidth}{|W|W|}
\includegraphics[scale=.63]{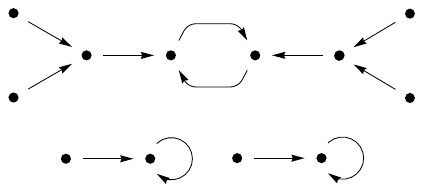} & \pic{graph12_211b} \\ \hline
\end{tabularx} \offinterlineskip
\phantom{Phantom text}
\end{table}
\newpage
\renewcommand{\arraystretch}{1.2}
\begin{table}[h]
\caption{Additional graphs from \cite{doyle:2018quad}}
\label{tab:graphs_false_j}
\begin{tabularx}{\textwidth}{|M|M|}
\hline
{\large$\boldsymbol{G_1}$} & {\large$\boldsymbol{G_2}$}
\end{tabularx} \offinterlineskip
\begin{tabularx}{\textwidth}{|W|W|}
\includegraphics[scale=.5]{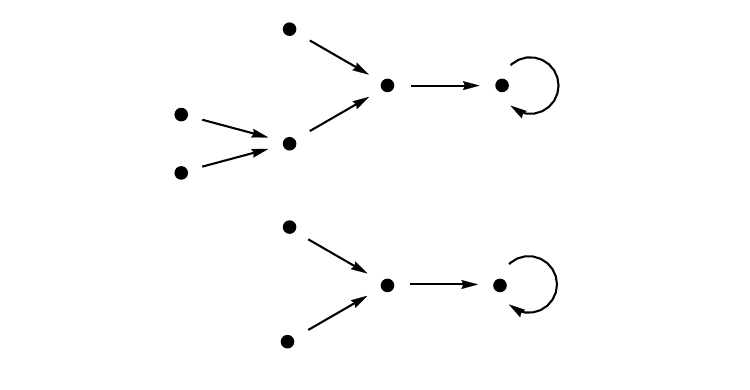} & \includegraphics[scale=.5]{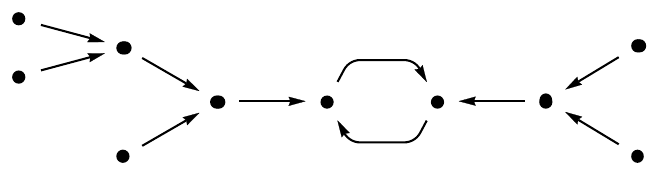}\\ \hline
\end{tabularx} \offinterlineskip
\begin{tabularx}{\textwidth}{|M|M|}
{\large$\boldsymbol{G_3}$} & {\large$\boldsymbol{G_4}$}
\end{tabularx} \offinterlineskip
\begin{tabularx}{\textwidth}{|W|W|}
\includegraphics[scale=.45]{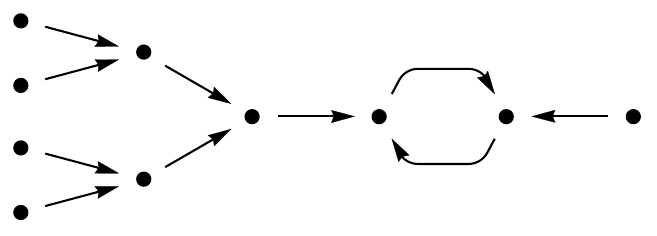} & \includegraphics[scale=.45]{graph1and2_3}\\ \hline
\end{tabularx} \offinterlineskip
\begin{tabularx}{\textwidth}{|M|M|}
{\large$\boldsymbol{G_5}$} & {\large$\boldsymbol{G_6}$}
\end{tabularx} \offinterlineskip
\begin{tabularx}{\textwidth}{|W|W|}
\includegraphics[scale=.43]{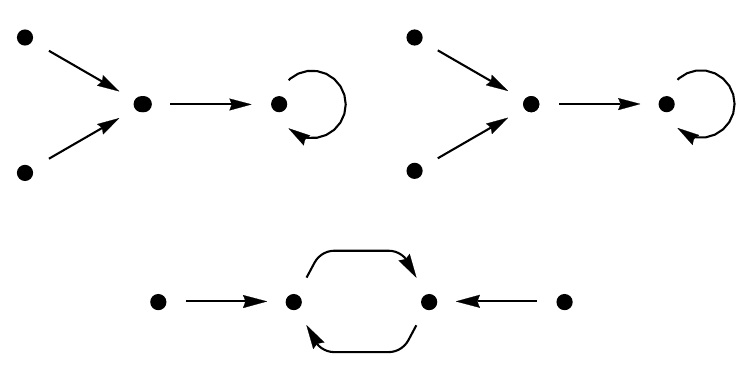} & \includegraphics[scale=.45]{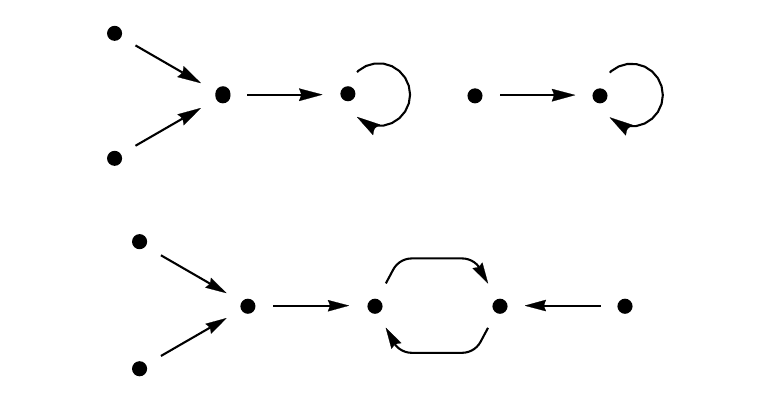}\\ \hline
\end{tabularx} \offinterlineskip 
\end{table}

\mbox{}
\newpage
\mbox{}
\newpage
\vfill
\mbox{}
\newpage
\section{Models for dynamical modular curves of genus 1 and 2}\label{app:models}
In Table~\ref{tab:models}, we give explicit models for the curves $X_1(G)$ discussed in Sections~\ref{sec:gen1} and \ref{sec:gen2}. All of these models were originally constructed in \cite{poonen:1998}, with the exception of the model for 8(4), which is given in \cite{morton:1998}. We refer the reader to those two articles for more information. A general point on $X_1(G)$ corresponds to a map $f_c$ together with a collection of preperiodic points for $f_c$, as described in \textsection\ref{sub:admissible}; in addition to giving a model for $X_1(G)$, we also express the parameter $c$ as a rational map on the given model.

\begin{table}[h]
	\caption{Models for dynamical modular curves of genus $1$ and $2$}
	\label{tab:models}
	\begin{tabular}{|l|l|l|}
	\hline
	Graph $G$ & Model for $X_1(G)$ & Parametrization of $c$\\\hline
		&& \\
	8(1,1)a & $y^2 = -(x^2 - 3)(x^2 + 1)$ & $c = -\dfrac{2(x^2 + 1)}{(x+1)^2(x - 1)^2}$ \\
		&& \\
	8(1,1)b & $y^2 = 2(x^3 + x^2 - x + 1)$ & $c = -\dfrac{2(x^2 + 1)}{(x+1)^2(x - 1)^2}$ \\
		&& \\
	8(2)a & $y^2 = 2(x^4 + 2x^3 - 2x + 1)$ & $c = -\dfrac{x^4 + 2x^3 + 2x^2 - 2x + 1}{(x+1)^2(x - 1)^2}$ \\
		&& \\
	8(2)b & $y^2 = 2(x^3 + x^2 - x + 1)$ & $c = -\dfrac{x^4 + 2x^3 + 2x^2 - 2x + 1}{(x+1)^2(x - 1)^2}$ \\
		&& \\
	10(2,1,1)a & $y^2 = 5x^4 - 8x^3 + 6x^2 + 8x + 5$ & $c = -\dfrac{(3x^2 + 1)(x^2 + 3)}{4(x+1)^2(x - 1)^2}$ \\
		&& \\
	10(2,1,1)b & $y^2 = (5x^2 - 1)(x^2 + 3)$ & $c = -\dfrac{(3x^2 + 1)(x^2 + 3)}{4(x+1)^2(x - 1)^2}$ \\\hline
		&& \\
	8(3) & $y^2 = x^6 - 2x^4 + 2x^3 + 5x^2 + 2x + 1$ & $c = -\frac{x^6 + 2x^5 + 4x^4 + 8x^3 + 9x^2 + 4x + 1}{4x^2(x+1)^2}$ \\
		&& \\
	8(4) & $y^2 = -x(x^2 + 1)(x^2 - 2x - 1)$ & $c = \frac{(x^2 - 4x - 1)(x^4 + x^3 + 2x^2 - x + 1)}{4x(x+1)^2(x-1)^2}$\\
		&& \\
	10(3,1,1) & $y^2 = x^6 + 2x^5 + 5x^4 + 10x^3 + 10x^2 + 4x + 1$ & $c = -\frac{x^6 + 2x^5 + 4x^4 + 8x^3 + 9x^2 + 4x + 1}{4x^2(x+1)^2}$\\
		&& \\
	10(3,2) & $y^2 = x^6 + 2x^5 + x^4 + 2x^3 + 6x^2 + 4x + 1$ & $c = -\frac{x^6 + 2x^5 + 4x^4 + 8x^3 + 9x^2 + 4x + 1}{4x^2(x+1)^2}$ \\
		&& \\\hline
	\end{tabular}
\end{table}

\vfill

\bibliography{C:/Users/John/Dropbox/jdoyle} 

\bibliographystyle{amsplain}

\end{document}